\journalname{Annals of the Institute of Statistical Mathematics}
\newcommand{\CondInj}[2]{\Cc_{#1,#2}}
\newif\ifREL
\title{The Degrees of Freedom of Partly Smooth Regularizers}
\author{Samuel~Vaiter \and
        Charles~Deledalle \and
        Jalal~Fadili \and
        Gabriel~Peyr\'{e} \and
        Charles~Dossal
}
\institute{Samuel~Vaiter, Gabriel~Peyr\'{e}  \at
              CEREMADE, CNRS, Universit\'{e} Paris-Dauphine, Place du Mar\'{e}chal De Lattre De Tassigny, 75775 Paris Cedex 16, France \\
              \email{\{samuel.vaiter,gabriel.peyre\}@ceremade.dauphine.fr}           
           \and
           Jalal~Fadili \at
              GREYC, CNRS-ENSICAEN-Universit\'{e} de Caen, 6, Bd du Mar\'{e}chal Juin, 14050 Caen Cedex, France \\
              \email{Jalal.Fadili@greyc.ensicaen.fr}   
           \and
           Charles~Deledalle, Charles~Dossal \at
              IMB, CNRS, Universit\'{e} Bordeaux 1, 351, Cours de la lib\'{e}ration, 33405 Talence Cedex, France \\
              \email{\{charles.deledalle,charles.dossal\}@math.u-bordeaux1.fr}
           }
\date{ }
\begin{document}

\maketitle
\vspace{-2.5cm}

\begin{abstract}
In this paper, we are concerned with regularized regression problems where the prior regularizer is a proper lower semicontinuous and convex function which is also partly smooth relative to a Riemannian submanifold. This encompasses as special cases several known penalties such as the Lasso ($\lun$-norm), the group Lasso ($\lun-\ldeux$-norm), the $\linf$-norm, and the nuclear norm. This also includes so-called analysis-type priors, i.e. composition of the previously mentioned penalties with linear operators, typical examples being the total variation or fused Lasso penalties. 
We study the sensitivity of {\textit{any}} regularized minimizer to perturbations of the observations and provide its precise local parameterization.
Our main sensitivity analysis result shows that the predictor moves locally stably along the same active submanifold as the observations undergo small perturbations. This local stability is a consequence of the smoothness of the regularizer when restricted to the active submanifold, which in turn plays a pivotal role to get a closed form expression for the variations of the predictor w.r.t. observations. We also show that, for a variety of regularizers, including polyhedral ones or the group Lasso and its analysis counterpart, this divergence formula holds Lebesgue almost everywhere.
When the perturbation is random (with an appropriate continuous distribution), this allows us to derive an unbiased estimator of the degrees of freedom and of the risk of the estimator prediction.
Our results hold true without requiring the design matrix to be full column rank.
They generalize those already known in the literature such as the Lasso problem, the general Lasso problem (analysis $\lun$-penalty), or the group Lasso where existing results for the latter assume that the design is full column rank.

\keywords{Degrees of freedom \and Partial smoothness \and Manifold \and Sparsity \and Model selection \and o-minimal structures \and Semi-algebraic sets \and Group Lasso \and Total variation}
\end{abstract}

\section{Introduction}
\label{sec:introduction}

\subsection{Regression and Regularization}

We consider a model
\begin{equation}\label{eq:linear-problem}
  \EE(Y|X) = h(\XX\xx_0),
\end{equation}
where $Y=(Y_1,\ldots,Y_n)$ is the response vector, $\xx_0 \in \RR^p$ is the unknown vector of linear regression coefficients, $\XX \in \RR^{n \times p}$ is the fixed design matrix whose columns are the $p$ covariate vectors, and the expectation is taken with respect to some $\sigma$-finite measure. $h$ is a known real-valued and smooth function $\RR^n \to \RR^n$. The goal is to design an estimator of $\xx_0$ and to study its properties. In the sequel, we do not make any specific assumption on the number of observations $n$ with respect to the number of predictors $p$. Recall that when $n < p$,~\eqref{eq:linear-problem} is underdetermined, whereas when $n \geq p$ and all the columns of $\XX$ are linearly independent, it is overdetermined.

Many examples fall within the scope of model \eqref{eq:linear-problem}. We here review two of them.

\begin{ex}[GLM]
One naturally thinks of generalized linear models (GLMs) \citep{McCullaghNelder89} which assume that conditionally on $\XX$, $Y_i$ are independent with distribution that belongs to a given (one-parameter) standard exponential family. Recall that the random variable $Z \in \RR$ has a distribution in this family if its distribution admits a density with respect to some reference $\sigma$-finite measure on $\RR$ of the form
\[
p(z;\theta) = B(z)\exp(z\theta - \varphi(\theta)), \quad \theta \in \Theta \subseteq \RR ~,
\]
where $\Theta$ is the natural parameter space and $\theta$ is the canonical parameter. For model \eqref{eq:linear-problem}, the distribution of $Y$
belongs to the $n$-parameter exponential family and its density reads
\begin{equation}
\label{eq:pdfexp}
f(y|\XX;\xx_0) = \pa{\prod_{i=1}^n B_i(y_i)}\exp\pa{\dotp{y}{\XX\xx_0} - \sum_{i=1}^n\varphi_i\pa{\pa{\XX\xx_0}_i}}, \quad \XX\xx_0 \in \Theta^n ~,
\end{equation}
where $\dotp{\cdot}{\cdot}$ is the inner product, and the canonical parameter vector is the linear predictor $\XX\xx_0$. In this case, $h(\mu)=(h_i(\mu_i))_{1 \leq i \leq n}$, where $h_i$ is the {\textit{inverse}} of the link function in the language of GLM. Each $h_i$ is a monotonic differentiable function, and a typical choice is the canonical link $h_i=\varphi_i'$, where $\varphi_i'$ is one-to-one if the family is regular \citep{Brown86}. 
\end{ex}

\begin{ex}[Transformations]
The second example is where $h$ plays the role of a transformation such as variance-stabilizing transformations (VSTs), symmetrizing transformations, or bias-corrected transformations. There is an enormous body of literature on transformations, going back to the early 1940s. A typical example is when $Y_i$ are independent Poisson random variables $\sim \Pp\pa{(\XX\xx_0)_i}$, in which case $h_i$ takes the form of the Anscombe bias-corrected VST. See \citep[Chapter~4]{DasGupta08} for a comprehensive treatment and more examples.
\end{ex}

\subsection{Variational Estimators}
\label{sec:block_regularizations}

Regularization is now a central theme in many fields including statistics, machine learning and inverse problems. It allows one to impose on the set of candidate solutions some prior structure on the object to be estimated. This regularization ranges from squared Euclidean or Hilbertian norms~\citep{Tikhonov97}, to non-Hilbertian norms that have sparked considerable interest in the recent years.

Given observations $(y_1,\ldots,y_n)$, we consider the class of estimators obtained by solving the convex optimization problem
\begin{equation}\label{eq-group-lasso}\tag{\regulP{y}}
  \xsoly(y) \in
  \uArgmin{ \xx \in \RR^p }
  \F(\xx,y) + \J(\xx) ~.
\end{equation}
The fidelity term $\F$ is of the following form
\begin{equation}\label{eq-fidelity-decompos}
  \F(\xx,y) = \F_0(\XX \xx, y)
\end{equation}
where $\F_0(\cdot,y)$ is a general loss function assumed to be a proper, convex and sufficiently smooth function of its first argument; see Section~\ref{sec:blocks} for a detailed exposition of the smoothness assumptions. The regularizing penalty $\J$ is proper lower semicontinuous and convex, and promotes some specific notion of simplicity/low-complexity on $\xsoly(y)$; see Section~\ref{sec:blocks} for a precise description of the class of regularizing penalties $\J$ that we consider in this paper. The type of convex optimization problem in \eqref{eq-group-lasso} is referred to as a regularized $M$-estimator in \cite{WainwrightDecomposable12}, where $\J$ is moreover assumed to have a special decomposability property. 

We now provide some illustrative examples of loss functions $F$ and regularizing penalty $J$ routinely used in signal processing, imaging sciences and statistical machine learning.

\begin{ex}[Generalized linear models]\label{exp-glm}
Generalized linear models in the exponential family falls into the class of losses we consider. Indeed, taking the negative log-likelihood corresponding to \eqref{eq:pdfexp} gives\footnote{Strictly speaking, the minimization may have to be over a convex subset of $\RR^p$.}
\begin{equation}
\label{eq:fidexp}
\F_0(\mu,y) = \sum_{i=1}^n\varphi_i\pa{\mu_i} - \dotp{y}{\mu} ~.
\end{equation}
It is well-known that if the exponential family is regular, then $\varphi_i$ is proper, infinitely differentiable, its Hessian is definite positive, and thus it is strictly convex~\citep{Brown86}. Therefore, $\F_0(\cdot,y)$ shares exactly the same properties. We recover the squared loss $\F_0(\mu,y)=\frac{1}{2}\norm{y-\mu}^2$ for the standard linear models (Gaussian case), and the logistic loss $\F_0(\mu,y) = \sum_{i=1}^n\log\pa{1+\exp(\mu_i)} - \dotp{y}{\mu}$ for logistic regression (Bernoulli case).

GLM estimators with losses \eqref{eq:fidexp} and $\lun$ or $\lun-\ldeux$ (group) penalties have been previously considered and some of their properties studied including in \citep{Bunea08,VandeGeer08,VandeGeer08b,Meier08,Bach10,Kakade10}; see also \cite[Chapter 3, 4 and 6]{BuhlmannVandeGeerBook11}.
\end{ex}

\begin{ex}[Lasso]
The Lasso regularization is used to promote the sparsity of the minimizers, see~\citep{chen1999atomi,tibshirani1996regre,osborne2000new,donoho2006most,Candes09,BickelLassoDantzig07}, and~\citep{BuhlmannVandeGeerBook11} for a comprehensive review. It corresponds to choosing $\J$ as the $\lun$-norm
\begin{equation}
  \label{lun-synthesis}
  J(\xx) = \normu{\xx} = \sum_{i=1}^p \abs{\xx_i} .
\end{equation}
It is also referred to as $\lun$-synthesis in the signal processing community, in contrast to the more general $\lun$-analysis sparsity penalty detailed below.
\end{ex}

\begin{ex}[General Lasso]
To allow for more general sparsity penalties, it may be desirable to promote sparsity through a linear operator $D = (d_1,\ldots,\allowbreak d_q) \in \RR^{p \times q}$. This leads to the so-called analysis-type sparsity penalty (a.k.a. general Lasso after~\cite{tibshirani2012dof}) where the $\lun$-norm is pre-composed by $D^*$, hence giving
\begin{equation}
  \label{lun-analysis}
  J(\xx) = \normu{D^* \xx} = \sum_{j=1}^q \abs{\dotp{d_j}{\xx}} .
\end{equation}
This of course reduces to the usual lasso penalty \eqref{lun-synthesis} when $D = \Id_p$. The penalty \eqref{lun-analysis} encapsulates several important penalties including that of the 1-D total variation~\citep{rudin1992nonlinear}, and the fused Lasso \citep{tibshirani2005sparsity}. In the former, $D^*$ is a finite difference approximation of the derivative, and in the latter, $D^*$ is the concatenation of the identity matrix $\Id_p$ and the finite difference matrix to promote both the sparsity of the vector and that of its variations.
\end{ex}

\begin{ex}[$\linf$ Anti-sparsity]
\label{ex:linf}
In some cases, the vector to be reconstructed is expected to be flat. Such a prior can be captured using the $\linf$ norm (a.k.a. Tchebycheff norm)
\begin{equation}
  \label{linf}
	J(\xx) = \normi{\xx} = \umax{i \in \ens{1,\dots,p}} \abs{\xx_i}.
\end{equation}
More generally, it is worth mentioning that a finite-valued function $\J$ is polyhedral convex (including Lasso, general Lasso, $\linf$) if and only if can be expressed as $\umax{i \in \ens{1,\dots,q}} \dotp{d_i}{\xx} - b_i$, where the vectors $d_i$ define the facets of the sublevel set at $1$ of the penalty~\citep{Rockafellar96}. The $\linf$ regularization has found applications in computer vision~\citep{jegou2012anti}, vector quantization~\citep{lyubarskii2010uncertainty}, or wireless network optimization~\citep{studer12signal}.
\end{ex}

\begin{ex}[Group Lasso]
When the covariates are assumed to be clustered in a few active groups/blocks, the group Lasso has been advocated since it promotes sparsity of the groups, i.e. it drives all the coefficients in one group to zero together hence leading to group selection, see~\citep{bakin1999adaptive,yuan2006model,bach2008consistency,Wei10} to cite a few. The group Lasso penalty reads
\begin{equation}
  \label{lun-deux-synthesis}
  J(\xx) = \norm{\xx}_{1,2} = \sum_{b \in \Bb} \norm{\xx_b}_2 .
\end{equation}
where $\xx_b=(\xx_i)_{i \in b}$ is the sub-vector of $\xx$ whose entries are indexed by the block $b \in \Bb$ where $\Bb$ is a disjoint union of the set of indices i.e.~$\bigcup_{b \in \Bb} = \ens{1,\ldots,p}$ such that $b, b' \in \Bb, b \cap b' = \emptyset$. The mixed $\lun-\ldeux$ norm defined in~\eqref{lun-deux-synthesis} has the attractive property to be invariant under (groupwise) orthogonal transformations.
\end{ex}

\begin{ex}[General Group Lasso]
One can push the structured sparsity idea one step further by promoting group/block sparsity through a linear operator, i.e. analysis-type group sparsity. Given a collection of linear operators $\{D_b\}_{b \in \Bb}$, that are not all orthogonal, the analysis group sparsity penalty is 
\begin{equation}
  \label{lun-deux-analysis}
  J(\xx) = \norm{D^* \xx}_{1,2} = \sum_{b \in \Bb} \norm{D_b^* \xx}_2. 
\end{equation}
This encompasses the 2-D isotropic total variation~\citep{rudin1992nonlinear}, where $\xx$ is a 2-D discretized image, and each $D_b^* \xx \in \RR^2$ is a finite difference approximation of the gradient of $\xx$ at a pixel indexed by $b$. The overlapping group Lasso \citep{jacob-overlap-synthesis} is also a special case of \eqref{lun-deux-analysis} by taking $D_b^* : \xx \mapsto \xx_b$ to be a block extractor operator~\citep{peyre2011adaptive,chen-proximal-overlap}.
\end{ex}

\begin{ex}[Nuclear norm]
The natural extension of low-complexity priors to matrix-valued objects $\xx \in \RR^{p_1 \times p_2}$ (where $p=p_1p_2$) is to penalize the singular values of the matrix. Let $U_{\xx} \in \RR^{p_1 \times p_1}$ and $V_{\xx} \in \RR^{p_2 \times p_2}$ be the orthonormal matrices of left and right singular vectors of $\xx$, and $\uplambda: \RR^{p_1 \times p_2} \to \RR^{p_2}$ is the mapping that returns the singular values of $\xx$ in non-increasing order. If $j \in \lsc(\RR^{p_2})$, i.e. convex, lower semi-continuous and proper, is an absolutely permutation-invariant function, then one can consider the penalty $J(\xx) = j(\uplambda(\xx))$. This is a so-called spectral function, and moreover, it can be also shown that $J \in \lsc(\RR^{p_1 \times p_2})$~\citep{LewisMathEig}. The most popular spectral penalty is the nuclear norm obtained for $j=\normu{\cdot}$, 
\begin{equation}
  \label{eq-nuclear-norm} 
  J(\xx) = \norm{\xx}_* = \normu{\uplambda(\xx)} ~.
\end{equation}
This penalty is the best convex candidate to enforce a low-rank prior. It has been widely used for various applications, including low rank matrix completion~\citep{recht2010guaranteed,candes2009exact}, robust PCA~\citep{CandesRPCA11}, model reduction~\citep{fazel2001rank}, and phase retrieval~\citep{CandesPhaseLift}. 
\end{ex}

\subsection{Sensitivity Analysis }
\label{sub:intro-sensitivity}

A chief goal of this paper is to investigate the sensitivity of any solution $\xsoly(y)$ to the parameterized problem~\eqref{eq-group-lasso} to (small) perturbations of $y$. Sensitivity analysis\footnote{The meaning of sensitivity is different here from what is usually intended in statistical sensitivity and uncertainty analysis.} is a major branch of optimization and optimal control theory. Comprehensive monographs on the subject are \citep{BonnansShapiro2000,mordukhovich1992sensitivity}. The focus of sensitivity analysis is the dependence and the regularity properties of the optimal solution set and the optimal values when the auxiliary parameters (e.g. $y$ here) undergo a perturbation. In its simplest form, sensitivity analysis of first-order optimality conditions, in the parametric form of the Fermat rule, relies on the celebrated implicit function theorem.

The set of regularizers $J$ we consider is that of partly smooth functions relative to a Riemannian submanifold as detailed in Section~\ref{sec:blocks}. The notion of partial smoothness was introduced in~\citep{Lewis-PartlySmooth}. This concept, as well as that of identifiable surfaces~\citep{Wright-IdentSurf}, captures essential features of the geometry of non-smoothness which are along the so-called ``active/identifiable manifold''. For convex functions, a closely related idea was developed in \citep{Lemarechal-ULagrangian}. Loosely speaking, a partly smooth function behaves smoothly as we move on the identifiable manifold, and sharply if we move normal to the manifold. In fact, the behaviour of the function and of its minimizers (or critical points) depend essentially on its restriction to this manifold, hence offering a powerful framework for sensitivity analysis theory. In particular, critical points of partly smooth functions move stably on the manifold as the function undergoes small perturbations~\citep{Lewis-PartlySmooth,Lewis-PartlyTiltHessian}. 

Getting back to our class of regularizers, the core of our proof strategy relies on the identification of the active manifold associated to a particular minimizer $\xsoly(y)$ of~\eqref{eq-group-lasso}. We exhibit explicitly a certain set of observations, denoted $\Hh$ (see Definition~\ref{defn:h}), outside which the initial non-smooth optimization~\eqref{eq-group-lasso} boils down locally to a smooth optimization along the active manifold. This part of the proof strategy is in close agreement with the one developed in~\citep{Lewis-PartlySmooth} for the sensitivity analysis of partly smooth functions. See also \citep[Theorem~13]{Bolte2011} for the case of linear optimization over a convex semialgebraic partly smooth feasible set, where the authors proves a sensitivity result with a zero-measure transition space. However, it is important to stress that neither the results of \citep{Lewis-PartlySmooth} nor those of \citep{Bolte2011,LewisGenericConvex11} can be applied straightforwardly in our context for two main reasons (see also Remark~\ref{rem:transpace} for a detailed discussion). In all these works, a non-degeneracy assumption is crucial while it does not necessarily hold in our case, and this is precisely the reason we consider the boundary of the sets $\Hh_{\Mm}$ in the definition of the transition set $\Hh$. Moreover, in the latter papers, the authors are concerned with a particular type of perturbations (see Remark~\ref{rem:transpace}) which does not allow to cover our class of regularized problems except for restrictive cases such as $\XX$ injective. For our class of problems \lasso, we were able to go beyond these works by solving additional key challenges that are important in a statistical context, namely: (i) we provide an analytical description of the set $\Hh$ involving the boundary of $\Hh_{\Mm}$, which entails that $\Hh$ is potentially of dimension strictly less than $n$, hence of zero Lebesgue measure, as we will show under a mild o-minimality assumption. (ii) we prove a general sensitivity analysis result valid for any proper lower semicontinuous convex partly smooth regularizer $\J$; (iii) we compute the first-order expansion of $\xsoly(y)$ and provide an analytical form of the weak derivative of $y \mapsto \XX\xsoly(y)$ valid outside a set involving $\Hh$. If this set is of zero-Lebesgue measure, this allows us to get an unbiased estimator of the risk on the prediction $\XX\xsoly(Y)$.

\subsection{Degrees of Freedom and Unbiased Risk Estimation}
\label{sub:intro-risk}

The degrees of freedom (DOF) of an estimator quantifies the complexity of a statistical modeling procedure~\citep{efron1986biased}. It is at the heart of several risk estimation procedures and thus allows one to perform parameter selection through risk minimization. 

In this section, we will assume that $\F_0$ in \eqref{eq-fidelity-decompos} is strictly convex, so that the response (or the prediction) $\msol(y)=\XX \xsoly(y)$ is uniquely defined as a single-valued mapping of $y$ (see Lemma~\ref{lem:unique}). That is, it does not depend on a particular choice of solution $\xsoly(y)$ of~\eqref{eq-group-lasso}. 

Let $\mu_0 = \XX \xx_0$. Suppose that $h$ in \eqref{eq:linear-problem} is the identity and that the observations $Y \sim \Nn(\mu_0,\sigma^2 \Id_n)$. Following~\citep{efron1986biased}, the DOF is defined as
\eq{
	\DOF = \sum_{i=1}^n \frac{\mathrm{cov}(Y_i, \msol_i(Y))}{\sigma^2} ~.
}
The well-known Stein's lemma~\citep{stein1981estimation} asserts that, if $y \mapsto \msol(y)$ is weakly differentiable function (i.e. typically in a Sobolev space over an open subset of $\RR^n$), such that each coordinate $y \mapsto \msol_i(y) \in \RR$  has an essentially bounded weak derivative\footnote{We write the same symbol as for the derivative, and rigorously speaking, this has to be understood to hold Lebesgue-a.e.} 
\eq{
\EE\pa{\Big| \frac{\partial \msol_i}{\partial y_i}(Y)\Big|} < \infty,  \quad \forall i ~,
}
then its divergence is an unbiased estimator of its DOF, i.e.
\eq{
  \widehat{\DOF} = \diverg(\msol)(Y) \eqdef \tr(\jac \msol(Y)) \qandq \EE(\widehat{\DOF}) = \DOF ~,
}
where $\jac \msol$ is the Jacobian of $y \mapsto \msol(y)$. In turn, this allows to get an unbiased estimator of the prediction risk $\EE( \norm{\msol(Y) - \mu_0}^2)$ through the SURE~\citep{stein1981estimation}.

Extensions of the SURE to independent variables from an exponential family are considered in \citep{hudson1978nie} for the continuous case, and \citep{Hwang82} in the discrete case. \cite{eldar-gsure} generalizes the SURE principle to continuous multivariate exponential families.

\subsection{Contributions} 
\label{sub:intro-contrib}

We consider a large class of losses $\F_0$, and of regularizing penalties $\J$ which are proper, lower semicontinuous, convex and partly smooth functions relative to a Riemannian submanifold, see Section~\ref{sec:blocks}.
For this class of regularizers and losses, we first establish in Theorem~\ref{thm-local} a general sensitivity analysis result, which provides the local parametrization of any solution to~\eqref{eq-group-lasso} as a function of the observation vector $y$. This is achieved without placing any specific assumption on $\XX$, should it be full column rank or not. We then derive an expression of the divergence of the prediction with respect to the observations (Theorem~\ref{thm-div}) which is valid outside a set of the form $\Gg \cap \Hh$, where $\Gg$ is defined in Section~\ref{sec:sensmu}. Using tools from o-minimal geometry, we prove that the transition set $\Hh$ is of Lebesgue measure zero. If $\Gg$ is also negligible, then the divergence formula is valid Lebesgue-a.e.. In turn, this allows us to get an unbiased estimate of the DOF and of the prediction risk (Theorem~\ref{thm-dof} and Theorem~\ref{thm-dof-exp}) for model \eqref{eq:linear-problem} under two scenarios: (i) Lipschitz continuous non-linearity $h$ and an additive i.i.d. Gaussian noise; (ii) GLMs with a continuous exponential family. Our results encompass many previous ones in the literature as special cases (see discussion in the next section). It is important however to mention that though our sensitivity analysis covers the case of the nuclear norm (also known as the trace norm), unbiasedness of the DOF and risk estimates is not guaranteed in general for this regularizer as the restricted positive definiteness assumption (see Section~\ref{sec:local}) may not hold at any minimizer (see Example~\ref{ex:injnuc}), and thus $\Gg$ may not be always negligible.

\subsection{Relation to prior works}

In the case of standard Lasso (i.e. $\lun$ penalty \eqref{lun-synthesis}) with $Y \sim \Nn(\XX\xx_0,\sigma^2\Id_n)$ and $\XX$ of full column rank, ~\citep{zou2007degrees} showed that the number of nonzero coefficients is an unbiased estimate for the DOF. Their work was generalized in~\citep{2012-kachour-statsinica} to any arbitrary design matrix. Under the same Gaussian linear regression model, unbiased estimators of the DOF for the general Lasso penalty~\eqref{lun-analysis}, were given independently in~\citep{tibshirani2012dof,vaiter-local-behavior}.

A formula of an estimate of the DOF for the group Lasso when the design is orthogonal within each group was conjectured in~\citep{yuan2006model}. \cite{kato2009degrees} studied the DOF of a general shrinkage estimator where the regression coefficients are constrained to a closed convex set $\Cc$. His work extends that of~\citep{MeyerWoodroofe} which treats the case where $\Cc$ is a convex polyhedral cone. When $\XX$ is full column rank,~\citep{kato2009degrees} derived a divergence formula under a smoothness condition on the boundary of $\Cc$, from which an unbiased estimator of the degrees of freedom was obtained.
When specializing to the constrained version of the group Lasso, the author provided an unbiased estimate of the corresponding DOF under the same group-wise orthogonality assumption on $\XX$ as~\citep{yuan2006model}.
\cite{hansen2014dof} studied the DOF of the metric projection onto a closed set (non-necessarily convex), and gave a precise representation of the bias when the projector is not sufficiently differentiable.
An estimate of the DOF for the group Lasso was also given by~\citep{solo2010threshold} using heuristic derivations that are valid only when $\XX$ is full column rank, though its unbiasedness is not proved. 

\cite{vaiter-icml-workshops} also derived an estimator of the DOF of the group Lasso and proved its unbiasedness when $\XX$ is full column rank, but without the orthogonality assumption required in~\citep{yuan2006model,kato2009degrees}. When specialized to the group Lasso penalty, our results establish that the DOF estimator formula in~\citep{vaiter-icml-workshops} is still valid while removing the full column rank assumption. This of course allows one to tackle the more challenging rank-deficient or underdetermined case $p>n$.


\section{Notations and preliminaries}
\label{sec:notations}

\myparagraph{Vectors and matrices}
Given a non-empty closed set $\Cc \subset \RR^p$, we denote $\proj_{\Cc}$ the orthogonal projection on $\Cc$. For a subspace $\T \subset \RR^p$, we denote
\eq{
	\xx_{\T} = \proj_{\T}\xx \qandq
	\XXT = \XX \proj_{\T}.
}
For a set of indices $I \subset \NN^*$, we will denote $\xx_I$ (resp. $\XX_I$) the subvector (resp. submatrix) whose entries (resp. columns) are those of $\xx$ (resp. of $\XX$) indexed by $I$. For a linear operator $A$, $\adj{A}$ is its adjoint. For a matrix $M$, $\transp{M}$ is its transpose and $M^+$ its Moore-Penrose pseudo-inverse.

\myparagraph{Sets} 
In the following, for a non-empty set $\Cc \subset \RR^p$, we denote $\co \Cc$ and $\cone \Cc$ respectively its convex and conical hulls. $\iota_{\Cc}$ is the indicator function of $\Cc$ (takes $0$ in $\Cc$ and $+\infty$ otherwise), and $N_\Cc(\xx)$ is the cone normal to $\Cc$ at $\xx$. For a non-empty convex set $\Cc$, its affine hull $\Aff C$ is the smallest affine manifold containing it. It is a translate of $\Lin \Cc$, the subspace parallel to $\Cc$, i.e. $\Lin \Cc = \Aff \Cc - \xx = \RR(\Cc - \Cc)$ for any $\xx \in \Cc$. The relative interior $\ri \Cc$ (resp. relative boundary $\rbd \Cc$) of $\Cc$ is its interior (resp. boundary) for the topology relative to its affine hull.

\myparagraph{Functions} 
For a $\Calt{1}$ vector field $v: y \in \RR^n \mapsto v(y)$, $\jac v(y)$ denotes its Jacobian at $y$. 
For a $\Cdeux$ smooth function $\tilde{f}$, $\dder \tilde{f}(\xx)[\xi]=\dotp{\grad \tilde{f}(\xx)}{\xi}$ is its directional derivative, $\grad \tilde{f}(\xx)$ is its (Euclidean) gradient and $\hess \tilde{f}(\xx)$ is its (Euclidean) Hessian at $\xx$. For a bivariate function $g: (\xx,y) \in \RR^p \times \RR^n \to \RR$ that is $\Cdeux$ with respect to the first variable $\xx$, for any $y$, we will denote $\grad g(\xx,y)$ and $\hess g(\xx,y)$ the  gradient and Hessian of $g$ at $\xx$ with respect to the first variable.

A function $f: \xx \in \RR^p \mapsto \RR \cup \ens{+\infty}$ is lower semicontinuous (lsc) if its epigraph is closed. $\lsc(\RR^p)$ is the class of convex and lsc functions which are proper (i.e.~not everywhere $+\infty$). $\partial f$ is the (set-valued) subdifferential operator of $f \in \lsc(\RR^p)$. If $f$ is differentiable at $\xx$ then $\grad f(\xx)$ is its unique subgradient, i.e. $\partial f(\xx) = \ens{\grad f(\xx)}$. 

Consider a function $\J \in \lsc(\RR^p)$ such that $\partial \J(\xx) \neq \emptyset$. We denote $\S_\xx$ the subspace parallel to $\partial J(\xx)$ and its orthogonal complement $\T_\xx$, i.e.
\begin{equation}\label{eq-dfn-T-gauges}
  \S_\xx = \Lin(\partial J(\xx)) \qandq \T_\xx = \S_\xx^\perp.
\end{equation}
We also use the notation 
\begin{equation*}
  \e{\xx} = \proj_{\Aff(\partial J(\xx))}(0) ,
\end{equation*}
i.e. the projection of 0 onto the affine hull of $\partial \J(\xx)$.

\myparagraph{Differential and Riemannian geometry}
Let $\Mm$ be a $\Cdeux$-smooth embedded submanifold of $\RR^p$ around $\xx^\star \in \Mm$. To lighten notation, henceforth we shall state $\Cdeux$-manifold instead of $\Cdeux$-smooth embedded submanifold of $\RR^p$. $\Tgt_{\xx}(\Mm)$ denotes the tangent space to $\Mm$ at any point $\xx \in \Mm$ near $\xx^\star$. The natural embedding of a submanifold $\Mm$ into $\RR^p$ permits to define a Riemannian structure on $\Mm$, and we simply say $\Mm$ is a Riemannian manifold. For a vector $v \in \Tgt_{\xx}(\Mm)^\perp$, the Weingarten map of $\Mm$ at $\xx$ is the operator $\Afk_{\xx}(\cdot,v): \Tgt_{\xx}(\Mm) \to \Tgt_{\xx}(\Mm)$ defined as
\[
\Afk_{\xx}(\xi,v) = -\proj_{\Tgt_{\xx}(\Mm) }\dder V[\xi]
\]
where $V$ is any local extension of $v$ to a normal vector field on $\Mm$. The definition is independent of the choice of the extension $V$, and $\Afk_{\xx}(\cdot,v)$ is a symmetric linear operator which is closely tied to the second fundamental form of $\Mm$; see~\citep[Proposition~II.2.1]{chavel2006smooth}.

Let $f$ be a real-valued function which is $\Cdeux$ on $\Mm$ around $\xx^\star$. The covariant gradient of $f$ at $\xx$ is the vector $\grad_{\Mm} f(\xx) \in \Tgt_\xx(\Mm)$ such that
\[
\dotp{\grad_{\Mm} f(\xx)}{\xi} = \frac{d}{dt}f\pa{\proj_{\Mm}(\xx+t\xi)}\big|_{t=0} , \forall \xi \in \Tgt_\xx(\Mm) ~.
\]
The covariant Hessian of $f$ at $\xx$ is the symmetric linear mapping $\hess_\Mm f(\xx)$ from $\Tgt_\xx(\Mm)$ into itself defined as
\[
\dotp{\hess_{\Mm} f(\xx)\xi}{\xi} = \frac{d^2}{dt^2}f\pa{\proj_{\Mm}(\xx+t\xi)}\big|_{t=0} , \forall \xi \in \Tgt_\xx(\Mm) ~.
\]
This definition agrees with the usual definition using geodesics or connections \citep{miller2005newton}. Assume now that $\Mm$ is a Riemannian embedded submanifold of $\RR^p$, and that a function $f$ has a smooth restriction on $\Mm$. This can be characterized by the existence of a smooth extension (representative) of $f$, i.e.~a smooth function $\tilde{f}$ on $\RR^p$ such that $\tilde{f}$ and $f$ agree on $\Mm$. Thus, the Riemannian gradient $\grad_{\Mm} f(\xx)$ is also given by
\eql{
\label{eq:covgrad}
\grad_{\Mm} f(\xx) = \proj_{\Tgt_\xx(\Mm)} \grad \tilde{f}(\xx)
}
and, $\forall \xi \in \Tgt_\xx(\Mm)$, the Riemannian Hessian reads
\begin{align}
\label{eq:covhess}
\hess_\Mm f(\xx)\xi	&= \proj_{\Tgt_\xx(\Mm)} \dder\pa{\grad_{\Mm} f}(\xx)[\xi] = \proj_{\Tgt_\xx(\Mm)}\dder\pa{\xx \mapsto \proj_{\Tgt_\xx(\Mm)} \grad \tilde{f}(\xx)}[\xi] \nonumber\\
			&= \proj_{\Tgt_\xx(\Mm)} \hess \tilde{f}(\xx) \proj_{\Tgt_\xx(\Mm)} \xi + \Afk_{\xx}\pa{\xi,\proj_{\Tgt_\xx(\Mm)^\perp} \grad \tilde{f}(\xx)} ~,
\end{align}
where the last equality comes from \citep[Theorem~1]{Absil13}. When $\Mm$ is an affine or linear subspace of $\RR^p$, then obviously $\Mm=\xx+\Tgt_\xx(\Mm)$, and $\Afk_{\xx}\pa{\xi,\proj_{\Tgt_\xx(\Mm)^\perp} \tilde{f}(\xx)}=0$, hence \eqref{eq:covhess} becomes
\eql{
\label{eq:covhesslin}
\hess_{\Mm} f(\xx) = \proj_{\Tgt_\xx(\Mm)}\hess \tilde{f}(\xx)\proj_{\Tgt_\xx(\Mm)}  ~.
}
Similarly to the Euclidean case, for a real-valued bivariate function $g$ that is $\Cdeux$ on $\Mm$ around the first variable $\xx$, for any $y$, we will denote $\grad_\Mm g(\xx,y)$ and $\hess_\Mm g(\xx,y)$ the Riemannian gradient and Hessian of $g$ at $\xx$ with respect to the first variable. See e.g. \citep{lee2003smooth,chavel2006smooth} for more material on differential and Riemannian manifolds.

%



\section{Partly Smooth Functions} 
\label{sec:blocks}

\subsection{Partial Smoothness}

Toward the goal of studying the sensitivity behaviour of $\widehat{\xx}(y)$ and $\msol(y)$ with regularizers $\J \in \lsc(\RR^p)$, we restrict our attention to a subclass of these functions that fulfill some regularity assumptions according to the following definition.
\begin{defn}\label{defn:psg}
  Let $J \in \lsc(\RR^{p})$ and a point $\xx$ such that $\partial J(\xx) \neq \emptyset$. $J$ is said to be \emph{partly smooth} at $\xx$ relative to a set $\Mm \subseteq \RR^p$ if
  \begin{enumerate}
  \item Smoothness: $\Mm$ is a $\Cdeux$-manifold and $J$ restricted to $\Mm$ is $\Cdeux$ around $\xx$.
  \item Sharpness:
    $\Tgt_{\xx}(\Mm) = T_{\xx} \eqdef \Lin(\partial J(\xx))^\perp$.
  \item Continuity: The set-valued mapping $\partial J$ is continuous at $\xx$ relative to $\Mm$.
  \end{enumerate}
  $\J$ is said to be \emph{partly smooth relative to the manifold $\Mm$} if $J$ is partly smooth at each point $\xx \in \Mm$ relative to $\Mm$.
\end{defn}
Observe that $\Mm$ being affine or linear is equivalent to $\Mm=\xx+T_{\xx}$. A closed convex set $\Cc$ is partly smooth at a point $\xx \in \Cc$ relative to a $\Cdeux$-manifold $\Mm$ locally contained in $\Cc$ if its indicator function $\iota_\Cc$ maintains this property.\\

\citet[Proposition~2.10]{Lewis-PartlySmooth} allows to prove the following fact (known as local normal sharpness).

\begin{fact}
\label{fact:sharp}
If $\J$ is partly smooth at $\xx$ relative to $\Mm$, then all $\xx' \in \Mm$ near $\xx$ satisfy
\[
\Tgt_{\xx'}(\Mm) = \T_{\xx'} ~.
\]
In particular, when $\Mm$ is affine or linear, then
\begin{align*}
\text{$\forall \xx' \in \Mm$ near $\xx$,} \quad \T_{\xx'} = \T_{\xx} ~.
\end{align*}
\end{fact}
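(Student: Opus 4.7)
The plan is to argue, following \citet[Proposition~2.10]{Lewis-PartlySmooth}, that each of the three defining clauses of partial smoothness at $\xx$ propagates to a whole relative neighbourhood of $\xx$ in $\Mm$, and then to read off the sharpness identity $\Tgt_{\xx'}(\Mm)=\T_{\xx'}$ at nearby $\xx'$ as a consequence. The $\Cdeux$-smoothness of $\Mm$ and of $J$ restricted to $\Mm$ are open conditions about $\xx$, so they transfer automatically to every $\xx'\in\Mm$ in some relative neighbourhood. The continuity clause for $\partial J$ relative to $\Mm$ at $\xx$ likewise restricts to continuity at every such $\xx'$ once the neighbourhood is small enough, so only the sharpness equality genuinely needs to be verified at $\xx'$.

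For the easy inclusion $\Tgt_{\xx'}(\Mm)\subseteq\T_{\xx'}$, take any $v\in\Tgt_{\xx'}(\Mm)$ together with a $\Cdeux$ curve $\gamma\subset\Mm$ satisfying $\gamma(0)=\xx'$ and $\gamma'(0)=v$. Since $J$ is $\Cdeux$ on $\Mm$, the real map $t\mapsto J(\gamma(t))$ is differentiable at $0$; combining the subgradient inequality at $\pm t$ with this smoothness yields $\dotp{u}{v}=(J\circ\gamma)'(0)$ for every $u\in\partial J(\xx')$. Hence $v$ annihilates $\partial J(\xx')-\partial J(\xx')$, so $v\in\Lin(\partial J(\xx'))^\perp=\T_{\xx'}$.

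For the reverse inclusion I would compare dimensions. Sharpness at $\xx$ gives $\dim\Tgt_{\xx}(\Mm)=\dim\T_{\xx}=p-\dim\Lin(\partial J(\xx))$, and $\dim\Tgt_{\xx'}(\Mm)=\dim\Tgt_{\xx}(\Mm)$ since $\Mm$ is a $\Cdeux$-manifold. The inner-semicontinuity half of the continuity clause lets me pick $u_0,\ldots,u_k\in\partial J(\xx)$ with $u_1-u_0,\ldots,u_k-u_0$ a basis of $\Lin(\partial J(\xx))$, and then approximate each $u_i$ by $u_i^{(n)}\in\partial J(\xx'_n)$ along any sequence $\xx'_n\to\xx$ in $\Mm$. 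Linear independence survives small perturbations, so $\dim\Lin(\partial J(\xx'_n))\ge k$ for large $n$, which translates into $\dim\T_{\xx'_n}\le\dim\T_{\xx}=\dim\Tgt_{\xx'_n}(\Mm)$. Combined with the inclusion from the previous paragraph, this forces the equality $\Tgt_{\xx'}(\Mm)=\T_{\xx'}$.

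The ``in particular'' clause is then immediate: when $\Mm$ is affine or linear, $\Tgt_{\xx'}(\Mm)=\Tgt_{\xx}(\Mm)$ for every $\xx'\in\Mm$, and the sharpness identity just established at $\xx'$ yields $\T_{\xx'}=\Tgt_{\xx'}(\Mm)=\Tgt_{\xx}(\Mm)=\T_{\xx}$. The most delicate step is the dimension-lower-semicontinuity argument for $\Lin(\partial J(\cdot))$: it is precisely there that Painlev\'e--Kuratowski continuity of the subdifferential along $\Mm$ is indispensable, and this is the content the cited Lewis proposition supplies.
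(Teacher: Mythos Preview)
Your argument is correct and is essentially a faithful reconstruction of the proof in \citet[Proposition~2.10]{Lewis-PartlySmooth}, which is all the paper does here: it states the fact and defers entirely to Lewis's result without giving its own argument. Your two-step scheme (the curve/subgradient inequality for the inclusion $\Tgt_{\xx'}(\Mm)\subseteq\T_{\xx'}$, then lower-semicontinuity of $\dim\Lin(\partial J(\cdot))$ via inner-semicontinuity of $\partial J$ at $\xx$ for the matching dimension bound) is exactly the mechanism behind local normal sharpness.

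One small caution: your opening claim that ``the continuity clause for $\partial J$ relative to $\Mm$ at $\xx$ likewise restricts to continuity at every such $\xx'$'' is not automatic---set-valued continuity at a single point does not by itself propagate to a neighbourhood---and establishing it is in fact part of what Lewis's proposition does. Fortunately your actual proof of the sharpness identity never uses continuity \emph{at} $\xx'$; you only invoke inner-semicontinuity \emph{at} $\xx$ along sequences $\xx'_n\to\xx$, which is exactly what the hypothesis provides. So the argument stands, but you should either drop that sentence or flag it as an additional (unused) conclusion rather than an input.
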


It can also be shown that the class of partly smooth functions enjoys a powerful calculus. For instance, under mild conditions, it is closed under positive combination, pre-composition by a linear operator and spectral lifting, with closed-form expressions of the resulting partial smoothness manifolds and their tangent spaces, see~\citep{Lewis-PartlySmooth,vaiter2014model}. 

It turns out that except the nuclear norm, the regularizing penalties that we exemplified in Section~\ref{sec:introduction} are partly smooth relative to a linear subspace. The nuclear norm is partly smooth relative to the fixed-rank manifold.

\begin{ex}[Lasso]
\label{ex:lassoM}
We denote $(a_i)_{1 \leq i \leq p}$ the canonical basis of $\RR^p$. Then, $J=\norm{\cdot}_1$ is partly smooth at $\xx$ relative to
\begin{equation*}
  \Mm = \T_\xx = \Span \ens{(a_i)_{i \in \supp(\xx)}}
  \qwhereq
  \supp(\xx) \eqdef \enscond{i \in \ens{1,\dots,p}}{\xx_i \neq 0} .
\end{equation*}
\end{ex}

\begin{ex}[General Lasso]
\label{ex:analassoM}
\citet[Proposition~9]{vaiter2013model} relates the partial smoothness subspace associated to a convex partly smooth regularizer $\J \circ D^*$, where $D$ is a linear operator, to that of $\J$. In particular, for $\J=\norm{\cdot}_1$, $\J \circ D^*$ is partly smooth at $\xx$ relative to
\begin{equation*}
  \Mm = \T_\xx = \Ker(D_{\Lambda^c}^*) \qwhereq \Lambda = \supp(D^* \xx) .
\end{equation*}
\end{ex}

\begin{ex}[$\linf$ Anti-sparsity]
\label{ex:linfM}
It can be readily checked that $\J=\normi{\cdot}$ is partly smooth at $\xx$ relative to
\begin{equation*}
 	\Mm = \T_\xx = \enscond{\xx'}{\xx'_I \in \RR \sign(\xx_I)} 
	\qwhereq
	I = \enscond{i}{\xx_i = \normi{\xx}} ~.
\end{equation*}
\end{ex}

\begin{ex}[Group Lasso]
\label{ex:glassoM}
The partial smoothness subspace associated to $\xx$ when the blocks are of size greater than 1 can be defined similarly, but using the notion of block support.
Using the block structure $\Bb$, one has that the group Lasso regularizer is partly smooth at $\xx$ relative to
\begin{equation*}
  \Mm = \T_\xx = \Span \ens{(a_i)_{i \in \bs(\xx)}},
\end{equation*}
where
\begin{equation*}
  \bs(\xx) = \enscond{i \in \ens{1,\dots,p}}{\exists b \in \Bb,\, \xx_b \neq 0 \qandq i \in b} .
\end{equation*}
\end{ex}

\begin{ex}[General Group Lasso]
\label{ex:gglassoM}
Using again \citep[Proposition~9]{vaiter2013model}, we can describe the partial smoothness subspace for $\J=\norm{D^* \cdot}_\Bb$, which reads
\begin{equation*}
  \Mm = \T_\xx = \Ker(D_{\Lambda^c}^*) \qwhereq \Lambda = \bs(D^* \xx) .
\end{equation*}
\end{ex}

\begin{ex}[Nuclear norm]
\label{ex:nucM}
Piecing together \citep[Theorem~3.19]{daniilidis2013orthogonal} and Example~\ref{ex:lassoM}, the nuclear norm can be shown to be partly smooth at $\xx \in \RR^{p_1 \times p_2}$ relative to the set
\begin{equation*}
	\Mm = \enscond{\xx'}{ \rank(\xx')=r }, \quad r=\rank(\xx) ,
\end{equation*}
which is a $\Cdeux$-manifold around $\xx$ of dimension $(p_1+p_2-r)r$; see \citep[Example~8.14]{lee2003smooth}.
\end{ex}

\begin{ex}[Indicator function of a partly smooth set $\Cc$]
\label{ex:setM}
Let $\Cc$ be a closed convex and partly smooth set at $\xx \in \Cc$ relative to $\Mm$. Observe that when $\xx \in \ri \Cc$, $\Mm = \RR^p$. For $\xx \in \rbd \Cc$, $\Mm$ is locally contained in $\rbd \Cc$.
\end{ex}

We now consider an instructive example of a partly smooth function relative to a non-flat active submanifold that will serve as a useful illustration in the rest of the paper.
\begin{ex}[$\J=\max(\norm{\cdot}-1,0)$]
\label{ex:maxnormM}
We have $\J \in \lsc(\RR^p)$ and continuous. It is then differentiable Lebesgue-a.e., except on the unit sphere $\sph^{p-1}$. For $\xx$ outside $\sph^{p-1}$, $\J$ is parly smooth at $\xx$ relative to $\RR^p$. For $\xx \in \sph^{p-1}$, $\J$ is partly smooth at $\xx$ relative to $\sph^{p-1}$. Obviously, $\sph^{p-1}$ is a $\Cdeux$-smooth manifold.
\end{ex}

\subsection{Riemannian Gradient and Hessian}
\label{subsec-restriction}
We now give expressions of the Riemannian gradient and Hessian for the case of partly smooth functions relative to a $\Cdeux$-manifold. This is summarized in the following fact which follows by combining \eqref{eq:covgrad}, \eqref{eq:covhess}, Definition~\ref{defn:psg} and \citet[Proposition~17]{Daniilidis06}.

\begin{fact}
\label{fact:gradhess}
If $\J$ is partly smooth relative at $\xx$ relative to $\Mm$, then for any $\xx' \in \Mm$ near $\xx$
\[
\JxM(\xx') = \proj_{\T_{\xx'}}\pa{\partial J(\xx')} = \e{\xx'} ~,
\]
and this does not depend on the smooth representation $\tilde{J}$ of $\J$ on $\Mm$. In turn,
\[
\Q(\xx) = \proj_{\T_{\xx}} \hess \tilde{J}(\xx) \proj_{\T_\xx} + \Afk(\cdot,\proj_{\S_\xx}\grad \tilde{J}(\xx)) ~.
\]
\end{fact}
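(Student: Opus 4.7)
The plan is to combine the definition of partial smoothness with the Riemannian formulas~\eqref{eq:covgrad}–\eqref{eq:covhess}, handling the two displayed identities separately.

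For the first identity, I would start from the sharpness condition in Definition~\ref{defn:psg}, which gives $\Tgt_{\xx'}(\Mm)=\T_{\xx'}$ at $\xx$ and, by Fact~\ref{fact:sharp}, also at every $\xx' \in \Mm$ near $\xx$. By definition $\S_{\xx'}=\Lin(\partial J(\xx'))=\T_{\xx'}^\perp$, so $\partial J(\xx') \subseteq \Aff(\partial J(\xx')) = \e{\xx'} + \S_{\xx'}$. Next I would observe that $\e{\xx'}$, being the projection of $0$ onto this affine hull, is characterized by $\e{\xx'} \perp \S_{\xx'}$, hence $\e{\xx'}\in \T_{\xx'}$. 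Projecting any $v\in \partial J(\xx')$ onto $\T_{\xx'}$ therefore kills the $\S_{\xx'}$ part and returns $\e{\xx'}$, giving $\proj_{\T_{\xx'}}(\partial J(\xx'))=\{\e{\xx'}\}$.

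The equality with the Riemannian gradient, and independence from the smooth representative $\tilde{J}$, is the step that actually uses partial smoothness substantively (this is Proposition~17 of~\citep{Daniilidis06}). Concretely: for any $\xi\in \T_{\xx'}=\Tgt_{\xx'}(\Mm)$, the directional derivative $\dotp{\grad \tilde{J}(\xx')}{\xi}$ equals the one-sided derivative $J'(\xx';\xi)=\sup_{v\in \partial J(\xx')}\dotp{v}{\xi}$ because $\tilde{J}$ and $J$ agree on $\Mm$ and $\xi$ is tangent. Since $\partial J(\xx') \subseteq \e{\xx'}+\S_{\xx'}$ and $\xi\perp \S_{\xx'}$, that supremum collapses to $\dotp{\e{\xx'}}{\xi}$. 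Hence $\grad \tilde{J}(\xx')-\e{\xx'}\in \T_{\xx'}^\perp=\S_{\xx'}$, so applying~\eqref{eq:covgrad} gives $\grad_\Mm J(\xx')=\proj_{\T_{\xx'}}\grad \tilde{J}(\xx')=\e{\xx'}$, a quantity intrinsic to $J$.

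For the Hessian formula I would simply plug $\T_\xx=\Tgt_\xx(\Mm)$ and $\T_\xx^\perp=\S_\xx$ into~\eqref{eq:covhess}, yielding
\[
\hess_\Mm J(\xx)\,\xi = \proj_{\T_\xx}\hess \tilde{J}(\xx)\proj_{\T_\xx}\xi + \Afk_\xx\bigl(\xi,\proj_{\S_\xx}\grad \tilde{J}(\xx)\bigr),
\]
which is the claimed expression for $\Q(\xx)$. I would note that although both terms depend a priori on the extension $\tilde{J}$, their sum does not, because of the gradient identity from the previous step and the fact that the tangential second derivatives along $\Mm$ are intrinsic.

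The only delicate point, and the step I expect to be the main obstacle to articulate cleanly, is the independence of $\grad_\Mm J(\xx')=\e{\xx'}$ from the smooth representative: it rests on the sharpness axiom together with the one-sided directional-derivative formula for a convex function, and is precisely the content invoked from \citep[Proposition~17]{Daniilidis06}. The remainder is a direct translation of~\eqref{eq:covgrad}–\eqref{eq:covhess} using the identification $\Tgt_{\xx'}(\Mm)=\T_{\xx'}$ supplied by partial smoothness.
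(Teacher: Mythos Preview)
Your proposal is correct and follows essentially the same route as the paper, which simply invokes \eqref{eq:covgrad}, \eqref{eq:covhess}, Definition~\ref{defn:psg} and \citep[Proposition~17]{Daniilidis06}; you have just unpacked those citations. The only place to tighten is the sentence equating $\dotp{\grad\tilde J(\xx')}{\xi}$ with the one-sided derivative $J'(\xx';\xi)$: since $\xx'+t\xi$ need not lie on $\Mm$, this identification goes through a curve in $\Mm$ tangent to $\xi$ together with local Lipschitzness of $J$, which is precisely what \citep[Proposition~17]{Daniilidis06} supplies.
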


Let's now exemplify this fact by providing the expressions of the Riemannian Hessian for the examples discussed above.


\begin{ex}[Polyhedral penalty]
\label{ex:lassohess}
Polyhedrality of $J$ implies that it is affine nearby $\xx$ along the partial smoothness subspace $\Mm=\xx+\T_\xx$, and its subdifferential is locally constant nearby $\xx$ along $\Mm$. In turn, the Riemannian Hessian of $J$ vanishes locally, i.e. $\Q(\xx') = 0$ for all $\xx' \in \Mm$ near $\xx$. Of course, this holds for the Lasso, general Lasso and $\linf$ anti-sparsity penalties since they are all polyhedral. 
\end{ex}

\begin{ex}[Group Lasso]
\label{ex:glassohess}
Using the expression of $\Mm=T_\xx$ in Example~\ref{ex:glassoM}, it is straightforward to show that 
\begin{equation*}
  \Q(\xx) = \delta_\xx \circ Q_{\xx^\perp},
\end{equation*}
where, for $\Lambda = \bs(\xx)$, 
\begin{gather*}
  \delta_\xx : \T_{\xx} \to \T_{\xx}, v \mapsto 
  \begin{cases}
  v_b / \norm{\xx_b} & \text{if}~ \xx_b \neq 0 \\
  0 & \text{otherwise}
  \end{cases}
  \\ \qandq \\ 
  Q_{\xx^\perp} : \T_{\xx} \to \T_{\xx}, v \mapsto 
  \begin{cases}
  v_b - \frac{\dotp{\xx_b}{v_b}}{\norm{\xx_b}^2} \xx_b & \text{if}~ \xx_b \neq 0 \\
  0 & \text{otherwise}
  \end{cases} ~.
\end{gather*}
\end{ex}

\begin{ex}[General Group Lasso]
\label{ex:gglassohess}

Applying the chain rule to Example~\ref{ex:glassohess}, we get 
\begin{equation*}
  \Q(\xx) = \proj_{\Ker(D_{\Lambda^c}^*)} D \pa{\delta_{D^* \xx} \circ Q_{(D^* \xx)^\perp}}D^*\proj_{\Ker(D_{\Lambda^c}^*)},
\end{equation*}
where $\Lambda = \bs(D^* \xx)$ and the operator $\delta_{D^* \xx} \circ Q_{(D^* \xx)^\perp}$ is defined similarly to Example~\ref{ex:glassohess}.
\end{ex}

\begin{ex}[Nuclear norm]
\label{ex:nuchess}
For $\xx \in \RR^{p_1 \times p_2}$ with $\rank(\xx)=r$, let $\xx = U \diag(\uplambda(\xx)) \adj{V}$ be a reduced rank-$r$ SVD decomposition, where $U \in \RR^{p_1 \times r}$ and $V \in \RR^{p_2 \times r}$ have orthonormal columns, and $\uplambda(\xx) \in (\RR_+ \setminus \ens{0})^{r}$ is the vector of singular values $(\uplambda_1(\xx),\cdots,\uplambda_r(\xx))$ in non-increasing order. From the partial smoothness of the nuclear norm at $\xx$ (Example~\ref{ex:nucM}) and its subdifferential, one can deduce that
\begin{align}
\label{eq:Tnuc}
&\Tgt_{\xx}(\Mm) = \T_{\xx} = \enscond{U A^* + B V^*}{ A \in \RR^{p_2 \times r}, B \in \RR^{p_1 \times r}  } \tandt \\
&\grad_{\Mm}\norm{\cdot}_*(\xx) = \e{\xx} = UV^* . \nonumber
\end{align}
It can be checked that the orthogonal projector on $\T_{\xx}$ is given by
\begin{align*}
\proj_{\T_{\xx}}W = U\adj{U} W + W V\adj{V} - U\adj{U} W V\adj{V} 
\end{align*}
Let $\xi \in \T_{\xx}$ and $W \in \S_{\xx}$. Then, from \citep[Section~4.5]{Absil13}, the Weingarten map reads
\begin{align}
\label{eq:weingnuc}
\Afk_{\xx}\pa{\xi,W} = W \adj{\xi} \adj{\xx^{+}} + \adj{\xx^{+}} \adj{\xi} W
\qwhereq
\adj{\xx^{+}} \eqdef U \diag(\uplambda(\xx))^{-1} \adj{V} .
\end{align}
In turn, from Fact~\ref{fact:gradhess}, the Riemannian Hessian of the nuclear norm reads
\begin{align*}
\hess_{\Mm}\norm{\cdot}_*(\xx)(\xi) &= \proj_{\T_{\xx}} \hess \widetilde{\norm{\cdot}_*}(\xx)(\proj_{\T_{\xx}} \xi) \\
&\quad + \proj_{\S_{\xx}} \grad\widetilde{\norm{\cdot}_*}(\xx) \adj{\xi} \adj{\xx^{+}} + \adj{\xx^{+}} \adj{\xi} \proj_{\S_{\xx}} \grad\widetilde{\norm{\cdot}_*}(\xx) ,
\end{align*}
where $\widetilde{\norm{\cdot}_*}$ is any smooth representative of the nuclear norm at $\xx$ on $\Mm$. Owing to the smooth transfer principle \citep[Corollary~2.3]{daniilidis2013orthogonal}, the nuclear norm has a $\Cdeux$-smooth (and even convex) representation on $\Mm$ near $\xx$ which is
\[
\widetilde{\norm{\xx'}_*} = \widetilde{\normu{\uplambda(\xx')}} = \sum_{i=1}^r \uplambda_i(\xx') .
\]
Combining this with \citep[Corollary~2.5]{Lewis95}, we then have $\grad\widetilde{\norm{\cdot}_*}(\xx) = UV^*$, and thus $\Afk_{\xx}\pa{\xi,\proj_{\S_{\xx}}\grad\widetilde{\norm{\cdot}_*}(\xx)} = 0$, or equivalently,
\begin{align}
\label{eq:nuchess}
\hess_{\Mm}\norm{\cdot}_*(\xx)(\xi) &= \proj_{\T_{\xx}} \hess \widetilde{\norm{\cdot}_*}(\xx)(\proj_{\T_{\xx}} \xi) .
\end{align}
The expression of the Hessian $\hess \widetilde{\norm{\cdot}_*}(\xx)$ can be obtained from the derivative of $UV^*$ using either \citep[Theorem~4.3]{CandesSVTSURE12} or \citep[Theorem~1]{DeledalleSVTSURE12} when $\xx$ is full-rank with distinct singular values, or from \citep[Theorem~3.3]{Lewis01} in the case where $\xx$ is symmetric with possibly repeated eigenvalues.  
\end{ex}

\begin{ex}[Indicator function of a partly smooth set $\Cc$]
\label{ex:sethess}
Let $\Cc$ be a closed convex and partly smooth set at $\xx \in \Cc$ relative to $\Mm$. From Example~\ref{ex:setM}, it is then clear that the zero-function is a smooth representative of $\iota_\Cc$ on $\Mm$ around $\xx$. In turn, the Riemannian gradient and Hessian of $\iota_\Cc$ vanish around $\xx$ on $\Mm$.
\end{ex}

\begin{ex}[$\J=\max(\norm{\cdot}-1,0)$]
\label{ex:maxnormhess}
Let $\xx \in \sph^{p-1}$. We have $\T_{\xx}=\pa{\RR\xx}^\perp$, and the orthogonal projector onto $\T_\xx$ is
\[
\proj_{\T_\xx} = \Id - \xx\transp{\xx} .
\]
The Weingarten map then reduces to
\[
\Afk_{\xx}\pa{\xi,v} = -\xi\dotp{\xx}{v} , \quad \xi \in \T_\xx \tandt v \in \S_\xx .
\]
Moreover, the zero-function is a smooth representative of $\J$ on $\sph^{p-1}$. It then follows that $\Q(\xx)=0$.
\end{ex}


\section{Sensitivity Analysis of $\xxs(y)$}
\label{sec:local}


In all the following, we consider the variational regularized problem~\eqref{eq-group-lasso}. We recall that $\J \in \lsc(\RR^p)$ and is partly smooth. We also suppose that the fidelity term fulfills the following conditions:
\eql{\label{hyp-f-reg}\tag{$C_{F}$}
	\foralls y \in  \RR^n, \quad   
	\F(\cdot,y) \in \Calt{2}(\RR^p)
	\qandq
	\foralls \xx \in  \RR^p, \quad \F(\xx,\cdot) \in \Calt{2}(\RR^n).
}
Combining \eqref{eq:covhess} and the first part of assumption~\eqref{hyp-f-reg}, we have for all $y \in \RR^n$
\eql{\label{eq:covhessF}
\Fxx(\xx,y)(\xx,y)\xi = \proj_{\T_\xx}\hess \F(\xx,y)\proj_{\T_\xx} + \Afk_{\xx}\pa{\xi,\proj_{\S_\xx} \grad F(\xx,y)}\proj_{\T_\xx} .
}
When $\Mm$ is affine or linear, equation \eqref{eq:covhessF} becomes
\eql{\label{eq:covhessFlin}
\Fxx(\xx,y)(\xx,y)\xi = \proj_{\T_\xx}\hess \F(\xx,y)\proj_{\T_\xx} .
}

\subsection{Restricted positive definiteness}
In this section, we aim at computing the derivative of the (set-valued) map $y \mapsto \xsoly(y)$ whenever this is possible. The following condition plays a pivotal role in this analysis.
\begin{defn}[Restricted Positive Definiteness] A vector $\xx \in \RR^p$ is said to satisfy the \emph{restricted positive definiteness condition} if, and only if,
\eql{\tag{$\CondInj{\xx}{y}$}\label{eq-injectivity-cond}
	\dotp{(\Fxx(\xx,y) + \Q(\xx))\xi}{\xi} > 0 \quad \forall ~ 0 \neq \xi \in \T_{\xx} .
}
\end{defn}

Condition~\eqref{eq-injectivity-cond} has a convenient re-writing in the following case.
\begin{lem}\label{lem:injectivity-cond}
Let $\J \in \lsc(\RR^p)$ be partly smooth at $\xx \in \RR^p$ relative to $\Mm$, and set $T = T_\xx$. Assume that $\Fxx(\xx,y)$ and $\Q(\xx)$ are positive semidefinite on $\T$. Then
\[
\text{\eqref{eq-injectivity-cond} holds if and only if }~~ \Ker( \Fxx(\xx,y) ) \cap \Ker(\Q(\xx)) \cap \T = \{ 0 \} .
\]
For instance, the positive semidefiniteness assumption is satisfied when $\Mm$ is an affine or linear subspace.
\end{lem}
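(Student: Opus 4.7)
The statement is essentially a linear-algebra fact about the quadratic form associated with a sum of two symmetric positive semidefinite operators, combined with the observation that in the affine/linear case the Riemannian Hessian formulas \eqref{eq:covhessFlin} make positive semidefiniteness automatic. The plan is to work only with the restrictions of $\Fxx(\xx,y)$ and $\Q(\xx)$ to $\T$, invoke the standard characterization $\dotp{A\xi}{\xi} = 0 \Longleftrightarrow A\xi = 0$ valid for any symmetric PSD operator $A$, and then perform the two implications.

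More precisely, write $A = \Fxx(\xx,y)$ and $B = \Q(\xx)$, which I view as symmetric operators on $\T$ (by \eqref{eq:covhess}--\eqref{eq:covhessF} and Fact~\ref{fact:gradhess} they admit such a representation). By assumption, both are PSD on $\T$, hence so is $A+B$. For the reverse implication, assume $\Ker(A)\cap \Ker(B) \cap \T = \{0\}$ and take $\xi \in \T$ with $\dotp{(A+B)\xi}{\xi}=0$. Since $\dotp{A\xi}{\xi} \geq 0$ and $\dotp{B\xi}{\xi} \geq 0$, both terms must vanish. The standard factorization of a symmetric PSD operator (any such $A$ can be written $A = L^*L$, so that $\dotp{A\xi}{\xi} = \|L\xi\|^2$) then forces $A\xi = B\xi = 0$, i.e. $\xi \in \Ker(A)\cap \Ker(B)\cap \T = \{0\}$, and \eqref{eq-injectivity-cond} follows. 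Conversely, assuming \eqref{eq-injectivity-cond}, any $\xi \in \Ker(A)\cap \Ker(B)\cap \T$ satisfies $\dotp{(A+B)\xi}{\xi}=0$, hence $\xi=0$.

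For the last sentence, suppose $\Mm$ is an affine (or linear) subspace, so $\Tgt_\xx(\Mm)=\T$ and \eqref{eq:covhesslin} applies to both $\F(\cdot,y)$ and any $\Cdeux$ representative $\widetilde{J}$ of $J$ on $\Mm$. Then $\Fxx(\xx,y) = \proj_\T \hess \F(\xx,y) \proj_\T$ and $\Q(\xx) = \proj_\T \hess \widetilde{J}(\xx) \proj_\T$. Since $\F(\cdot,y)$ is convex on $\RR^p$ (as a composition of the convex $\F_0(\cdot,y)$ with $\XX$), its Euclidean Hessian is PSD, which yields PSD of $\Fxx(\xx,y)$ on $\T$. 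Similarly, $J$ being convex on $\RR^p$ and $\Mm$ affine force $J|_\Mm$ to be convex, so its intrinsic Hessian $\Q(\xx)$ is PSD on $\T$, giving the stated sufficient condition.

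The proof is routine; the only subtle point is justifying that one may work with $A$ and $B$ as symmetric operators on $\T$ (rather than on $\RR^p$) and that the $L^*L$ factorization trick applies, but both follow directly from the material in Section~\ref{subsec-restriction} and elementary spectral theory.
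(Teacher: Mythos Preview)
Your proof is correct and follows essentially the same route as the paper's: both reduce the equivalence to the elementary fact that for symmetric PSD operators $A,B$ on $\T$, the quadratic form $\dotp{(A+B)\xi}{\xi}$ vanishes iff $\xi\in\Ker A\cap\Ker B$, and both derive the affine-case PSD from \eqref{eq:covhesslin} together with convexity of $\F(\cdot,y)$ and of $J|_\Mm$. The paper's argument is terser (and cites an external lemma for PSD of $\Q(\xx)$), whereas you spell out the $L^*L$ factorization and the convexity of the restriction explicitly; substantively the arguments coincide.
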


When $\F$ takes the form \eqref{eq-fidelity-decompos} with $\F_0$ the squared loss, condition~\eqref{eq-injectivity-cond} can be interpreted as follows in the examples we discussed so far. 

\begin{ex}[Polyhedral penalty]
\label{ex:injpolyh}
Recall that a polyhedral penalty is partly smooth at $\xx$ relative to $\Mm=\xx+\T_{\xx}$. Combining this with Example~\ref{ex:lassohess}, condition \eqref{eq-injectivity-cond} specializes to
\[
\Ker(\XX_{\T_{\xx}}) = \ens{0} .
\]

\paragraph{Lasso}
Applying this to the Lasso (see~Example~\ref{ex:lassoM}), \eqref{eq-injectivity-cond} reads $\Ker(\XX_\Lambda)=\ens{0}$, with $\Lambda=\supp(\xx)$. This condition is already known in the literature, see for instance~\citep{2012-kachour-statsinica}.
\paragraph{General Lasso}
In this case, Example~\ref{ex:analassoM} entails that \eqref{eq-injectivity-cond} becomes
\[
\Ker(\XX) \cap \Ker(D_{\Lambda^c}^*)=\ens{0}, \qwhereq \Lambda = \supp(D^* \xx) .
\]
This condition was proposed in~\citep{vaiter-local-behavior}. 
\end{ex}

\begin{ex}[Group Lasso]
\label{ex:injglasso}
For the case of the group Lasso, by virtue of Lemma~\ref{lem:unique}(ii) and Example~\ref{ex:glassohess}, one can see that condition~\eqref{eq-injectivity-cond} amounts to assuming that the system $\enscond{\XX_b \beta_b}{b \in \Bb, \xx_{b} \neq 0}$ is linearly independent. This condition appears in \citep{LiuZhang09} to establish $\ldeux$-consistency of the group Lasso. It goes without saying that condition~\eqref{eq-injectivity-cond} is much weaker than imposing that $\XX_\Lambda$ is full column rank, which is standard when analyzing the Lasso.
\end{ex}

\begin{ex}[General group Lasso]
\label{ex:injgglasso}
For the general group Lasso, let $I_{\xx}=\enscond{i}{b_i \in \Bb \tandt D^*_{b_i}\xx \neq 0}$, i.e.~the set indexing the active blocks of $D^*\xx$. Combining Example~\ref{ex:gglassoM} and Example~\ref{ex:gglassohess}, one has
\begin{align*}
\Ker(\Q(\xx)) &\cap \Ker(D_{\Lambda^c}^*) = \\ &\enscond{h \in \RR^p}{D_{b_i}^*h=0 ~ \forall i \notin I_{\xx} \tandt D^*_{b_i} h \in \RR ~ D^*_{b_i} \xx ~ \forall i \in I_{\xx}} ,
\end{align*}
where $\Lambda = \bs(D^* \xx)$. Indeed, $\delta_{D^*\xx}$ is a diagonal strictly positive linear operator, and $Q_{(D^*\xx)^\perp}$ is a block-wise linear orthogonal projector, and we get for $h \in \Ker(D_{\Lambda^c}^*)$,
\begin{align*}
h \in \Ker(\Q(\xx))	& \iff \dotp{h}{\Q(\xx)h} = 0 \\
			& \iff \dotp{D^*h}{\pa{\delta_{D^*\xx} \circ Q_{(D^*\xx)^\perp}} D^*h} = 0 \\
			& \iff \sum_{i \in I_{\xx}} \frac{\norm{\proj_{(D^*_{b_i}\xx)^\perp}(D^*_{b_i} h)}^2}{\norm{D^*_{b_i}\xx}} = 0 \\
			& \iff D^*_{b_i}\xx \in \RR ~ D^*_{b_i} \xx \quad \forall i \in I_{\xx} .
\end{align*}
In turn, by Lemma~\ref{lem:unique}(ii), condition~\eqref{eq-injectivity-cond} is equivalent to saying that $0$ is the only vector in the set
\[
\enscond{h \in \RR^p}{\XX h = 0 \tandt D^*_{b_i} h = 0 ~ \forall i \notin I_{\xx} \tandt D^*_{b_i} h \in \RR ~ D^*_{b_i} \xx ~ \forall i \in I_{\xx} } .
\]
Observe that when $D$ is a Parseval tight frame, i.e.~$DD^*=\Id$, the above condition is also equivalent to saying that the system $\enscond{\pa{\XX D}_{b_i} D^*_{b_i}\beta}{i \in I_{\xx}}$ is linearly independent.
\end{ex}

\begin{ex}[Nuclear norm]
\label{ex:injnuc}
We have seen in Example~\ref{ex:nuchess} that the nuclear norm has a $\Cdeux$-smooth representative which is also convex. It then follows from \eqref{eq:nuchess} that the Riemannian Hessian of the nuclear norm at $\xx$ is positive semidefinite on $\T_\xx$, where $\T_\xx$ is given in \eqref{eq:Tnuc}.

As far as $F$ is concerned, one cannot conclude in general on positive semidefiniteness of its Riemannian Hessian. Let's consider the case where $\xx \in \Sbf^p$, the vector space of real $p_1 \times p_1$ symmetric matrices endowed with the trace (Frobenius) inner product $\dotptr{\xx}{\xx'}=\tr(\xx\xx')$. From \eqref{eq:weingnuc} and \eqref{eq:covhessF}, we have for any $\xi \in \T_{\xx} \cap \Sbf^{p_1}$
\begin{align*}
\dotptr{\xi}{\Fxx(\xx,y)(\xi)} = &\dotptr{\xi}{\proj_{\T_{\xx}} \hess\F(\xx,y)(\proj_{\T_{\xx}} \xi)} \\
& + 2\dotptr{\xi U \diag(\uplambda(\xx))^{-1} \transp{U} \xi}{\proj_{\S_\xx}\Fx(\xx,y)} .
\end{align*}
Assume that $\xx$ is a global minimizer of \lasso, which by Lemma~\ref{lem:first-order}, implies that
\[
\proj_{\S_\xx}\Fx(\xx,y) = U_{\perp} \diag(\upgamma) \transp{U_{\perp}}
\]
where $U_{\perp} \in \RR^{n \times (p_1-r)}$ is a matrix whose columns are orthonormal to $U$, and $\upgamma \in [-1,1]^{p_1-r}$. We then get
\begin{align*}
\dotptr{\xi}{\Fxx(\xx,y)(\xi)} = &\dotptr{\xi}{\proj_{\T_{\xx}} \hess \F(\xx,y)(\proj_{\T_{\xx}} \xi)} \\
& + 2\dotptr{\transp{U_\perp}\xi U \diag(\uplambda(\xx))^{-1} \transp{U} \xi U_\perp}{\diag(\upgamma)} .
\end{align*}
It is then sufficient that $\xx$ is such that the entries of $\upgamma$ are positive for $\Fxx(\xx,y)$ to be indeed positive semidefinite on $\T$. In this case, Lemma~\ref{lem:injectivity-cond} applies.
\end{ex}

In a nutshell, Lemma~\ref{lem:injectivity-cond} does not always apply to the nuclear norm as $\Fxx(\xx,y)$ is not always guaranteed to be positive semidefinite in this case. One may then wonder whether there exist partly smooth functions $\J$, with a non-flat active submanifold, for which Lemma~\ref{lem:injectivity-cond} applies, at least at some minimizer of \lasso. The answer is affirmative for instance for the regularizer of Example~\ref{ex:maxnormM}.

\begin{ex}[$\J=\max(\norm{\cdot}-1,0)$]
\label{ex:injmaxnorm}
Let $\xx \in \sph^{p-1}$. From Example~\ref{ex:maxnormhess}, we have for $\xi \in \T_\xx$
\[
\dotp{\xi}{\Fxx(\xx,y)\xi} = \dotp{\xi}{\hess \F(\xx,y)\xi} - \norm{\xi}^2\dotp{\xx}{\Fx(\xx,y)} .
\]
Assume that $\xx$ is a global minimizer of \lasso, which by Lemma~\ref{lem:first-order}, implies that
\[
-\Fx(\xx,y) \in \xx [0,1] \Rightarrow -\dotp{\xx}{\Fx(\xx,y)} \in [0,1] .
\]
Thus, $\dotp{\xi}{\Fxx(\xx,y)\xi} \geq 0$, for all $\xi \in \T_\xx$. Since from Example~\ref{ex:maxnormhess}, $\Q(\xx)=0$, Lemma~\ref{lem:injectivity-cond} applies at $\xx$. Condition~\eqref{eq-injectivity-cond} then holds if, and only if, $\Fxx(\xx,y)$ is positive definite on $\T_\xx$. For the case of a quadratic loss, this is equivalent to 
\[
\ker(\XX) \cap \T_\xx = \ens{0} \quad or \quad \text{$\xx$ is not a minimizer of $\F(\cdot,y)$}.
\]
\end{ex}

\subsection{Sensitivity analysis: Main result}
Let us now turn to the sensitivity of any minimizer $\xsoly(y)$ of \lasso to perturbations of $y$. Because of non-smoothness of the regularizer $\J$, it is a well-known fact in sensitivity analysis that one cannot hope for a global claim, i.e. an everywhere smooth mapping\footnote{To be understood here as a set-valued mapping.} $y \mapsto \xsoly(y)$. Rather, the sensitivity behaviour is local. This is why the reason we need to introduce the following transition space $\Hh$, which basically captures points of non-smoothness of $\xsoly(y)$.

Let's denote the set of all possible partial smoothness active manifolds $\Mm_{\xx}$ associated to $\J$ as
\begin{equation}
\label{eq:Tt}
  \Mscr = \left\{ \Mm_\xx \right\}_{\xx \in \RR^p}.
\end{equation}
For any $\Mm \in \Mscr$, we denote $\Mc$ the set of vectors sharing the same partial smoothness manifold $\Mm$,
\begin{equation*}
  \Mc = \enscond{\xx' \in \RR^p}{\Mm_{\xx'}=\Mm}.
\end{equation*}
For instance, when $J=\norm{\cdot}_1$, $\Mc_\xx$ is the cone of all vectors sharing the same support as $\xx$.

\begin{defn}\label{defn:h}
  The \emph{transition space} $\Hh$ is defined as
  \begin{align*}
    \Hh = \bigcup_{\Mm \in \Mscr} \; \Hh_{\Mm},
    \qwhereq \Hh_{\Mm} &= \bd(\Pi_{n+p,n}(\Aa_{\Mm})) ,
  \end{align*}
  where $\Mscr$ is given by \eqref{eq:Tt}, and we denote  
  \eq{
    \Pi_{n+p,n} : 
    \left\{
      \begin{array}{ccc}
        \RR^n \times \Mc & \longrightarrow & \RR^n \\
        (y, \xx)  & \longmapsto & y  
      \end{array}
    \right.
  }
  the canonical projection on the first $n$ coordinates, $\bd \Cc$ is the boundary of the set $\Cc$, and 
  \begin{equation*}
    \Aa_{\Mm} =
    \enscond{
      (y, \xx) \in \RR^n \times \Mc
    }{
      - \Fx( \xx, y) \in \rbd \partial J(\xx)
    }.
  \end{equation*}
\end{defn}

\begin{rem}
\label{rem:transpace}
Before stating our result, some comments about this definition are in order. When $\bd$ is removed in the definition of $\Hh_{\Mm}$, we recover the classical setting of sensitivity analysis under partial smoothness, where $\Hh_{\Mm}$ contains the set of degenerate minimizers (those such that $0$ is in the relative boundary of the subdifferential of $\F(\cdot,y)+\J$). This is considered for instance in \citep{Bolte2011,LewisGenericConvex11} who studied sensitivity of the minimizers of $\xx \mapsto f_{\upnu}(\xx) \eqdef f(\xx)-\dotp{\upnu}{\xx}$ to perturbations of $\upnu$ when $f \in \lsc(\RR^{p})$ and partly smooth; see also \citep{LewisGenericSemialgebraic15} for the semialgebraic non-necessarily non-convex case. These authors showed that for $\upnu$ outside a set of Lebesgue measure zero, $f_\upnu$ has a non-degenerate minimizer with quadratic growth of $f_{\upnu}$, and for each $\bar{\upnu}$ near $\upnu$, the perturbed function $f_{\bar{\upnu}}$ has a unique minimizer that lies on the active manifold of $f_{\upnu}$ with quadratic growth of $f_{\bar{\upnu}}$. These results however do not apply to our setting in general. To see this, consider the case of \lasso where $\F$ takes the form \eqref{eq-fidelity-decompos} with $\F_0$ the quadratic (the same applies to other losses in the exponential family just as well). Then, \lasso is equivalent to minimizing $f_{\upnu}$, with $f=\J+\norm{\XX\cdot}^2$ and $\upnu=2\transp{\XX}y$. It goes without saying that, in general (i.e. for any $\XX$), a property valid for $\upnu$ outside a zero Lebesgue measure set does \textbf{not} imply it holds for $y$ outside a zero Lebesgue measure set. To circumvent such a difficulty, our key contribution is to consider the boundary of $\Hh_{\Mm}$. This turns out to be crucial to get a set of dimension potentially strictly less than $n$, hence negligible, as we will show under a mild o-minimality assumption (see Section~\ref{sec:dof}). However, doing so, uniqueness of the minimizer is not longer guaranteed.
\end{rem}

In the particular case of the Lasso (resp. general Lasso), i.e. $\F_0$ is the squared loss, $\J=\norm{\cdot}_1$ (resp. $\J=\norm{D^* \cdot}_1$), the transition space $\Hh$ specializes to the one introduced in~\citep{2012-kachour-statsinica} (resp. \citep{vaiter-local-behavior}). In these specific cases, since $\J$ is a polyhedral gauge, $\Hh$ is in fact a union of affine hyperplanes. The geometry of this set can be significantly more complex for other regularizers. For instance, for $\J = \norm{\cdot}_{1,2}$, it can be shown to be a semi-algebraic set (union of algebraic hyper-surfaces). Section~\ref{sec:dof} is devoted to a detailed analysis of this set $\Hh$.

We are now equipped to state our main sensitivity analysis result, whose proof is deferred to Section~\ref{sub:local}.

\begin{thm}\label{thm-local}
  Assume that~\eqref{hyp-f-reg} holds. Let $y \not\in \Hh$, and $\xxs(y)$ a solution of \lasso where $\J \in \lsc(\RR^p)$ is partly smooth at $\xxs(y)$ relative to $\Mm \eqdef \Mm_{\xxs(y)}$ and such that $(\CondInj{\xxs(y)}{y})$ holds.
  Then, there exists an open neighborhood $\neighb \subset \RR^n$ of $y$, and a mapping $\solm : \neighb \to \Mm$ such that
  \begin{enumerate}
  \item For all $\bar y \in \neighb$, $\solmB$ is a solution of $(\lassoB)$, and $\solm(y) = \xxs(y)$.
  \item the mapping $\solm$ is $\Calt{1}(\neighb)$ and
    \eql{\label{eq-differential}
      \foralls \bar y \in \neighb, \quad
      \jac \solm(\bar y) = -( \Fxx(\solm(\bar y),\bar y) + \Q(\solm(\bar y)) )^{+} 
       \proj_{\T_{\solm(\bar y)}} 
      \Fxy(\solm(\bar y),\bar y),
    }
    where $\Fxy(\xx,y)$ is the Jacobian of $\Fx(\xx,\cdot)$ with respect to the second variable evaluated at $y$.
  \end{enumerate}
\end{thm}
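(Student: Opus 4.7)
The plan is to reduce \eqref{eq-group-lasso} locally to a smooth equation on the active manifold $\Mm$ and then invoke the implicit function theorem. By Fermat's rule, $-\Fx(\xxs(y),y) \in \partial J(\xxs(y))$. Partial smoothness (Fact~\ref{fact:gradhess}) ensures that for $\xx \in \Mm$ near $\xxs(y)$ the subdifferential satisfies $\proj_{\T_\xx}\partial J(\xx)=\{\e{\xx}\}$, so this inclusion splits into a tangential equation $\proj_{\T_\xx}\Fx(\xx,y) + \e{\xx}=0$, equivalent to $\grad_\Mm F(\xx,y) + \grad_\Mm \widetilde J(\xx)=0$ for a smooth representative $\widetilde J$ of $J$ on $\Mm$, and a normal inclusion $-\Fx(\xx,y) \in \partial J(\xx)$.

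I would then define $\Psi : \Mm \times \RR^n \to T\Mm$ by $\Psi(\xx,\bar y) = \grad_\Mm F(\xx,\bar y) + \grad_\Mm \widetilde J(\xx)$, so that $\Psi(\xxs(y),y)=0$. Combining \eqref{eq:covhess} with Fact~\ref{fact:gradhess} identifies the covariant derivative of $\Psi$ in its first variable at $(\xxs(y),y)$ with the symmetric operator $\Fxx(\xxs(y),y) + \Q(\xxs(y))$ acting on $\T_{\xxs(y)}$. Assumption~\eqref{eq-injectivity-cond} makes this operator positive definite, and hence bijective, on $\T_{\xxs(y)}$. The implicit function theorem on manifolds then produces an open neighborhood $\neighb \subset \RR^n$ of $y$ and a $\Calt{1}$ map $\solm:\neighb \to \Mm$ with $\solm(y)=\xxs(y)$ and $\Psi(\solm(\bar y),\bar y)=0$ on $\neighb$. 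Differentiating this identity and inverting on $\T$, whence the Moore--Penrose pseudo-inverse appears in ambient coordinates, yields formula~\eqref{eq-differential}.

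It then remains to promote $\solm(\bar y)$ from a critical point of $F+\widetilde J$ on $\Mm$ to a genuine global minimizer of $(\lassoB)$. The equation $\Psi=0$ already furnishes $-\Fx(\solm(\bar y),\bar y) \in \Aff\partial J(\solm(\bar y))$; the task is to upgrade this to $-\Fx(\solm(\bar y),\bar y) \in \partial J(\solm(\bar y))$, after which convexity of $F(\cdot,\bar y)+J$ turns $\solm(\bar y)$ into a minimizer. If $-\Fx(\xxs(y),y) \in \ri \partial J(\xxs(y))$, joint continuity of $\Fx$ and of $\partial J$ relative to $\Mm$ (Definition~\ref{defn:psg}) transfers the inclusion to nearby $\bar y$ upon shrinking $\neighb$. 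Otherwise $(y,\xxs(y)) \in \Aa_\Mm$ and so $y \in \Pi_{n+p,n}(\Aa_\Mm)$; the hypothesis $y \notin \Hh_\Mm = \bd\,\Pi_{n+p,n}(\Aa_\Mm)$ then places $y$ in the interior of that projection, which combined with closedness of $\Aa_\Mm$ and the local uniqueness of the zero set of $\Psi$ propagates the rbd-membership to nearby $\bar y$, giving $-\Fx(\solm(\bar y),\bar y) \in \rbd \partial J(\solm(\bar y)) \subset \partial J(\solm(\bar y))$.

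The principal obstacle is this last subdifferential-inclusion step in the degenerate branch: it is precisely why the boundary operator appears in the definition of $\Hh$. The closest existing sensitivity results under partial smoothness, namely \citep{Lewis-PartlySmooth,Bolte2011,LewisGenericConvex11}, sidestep it by postulating strict complementarity, which need not hold for minimizers of \eqref{eq-group-lasso}. A secondary technical point deserving care is the correct computation of the covariant Hessian of $F$ on $\Mm$ when $\Mm$ is non-flat, where \eqref{eq:covhess} introduces the Weingarten contribution $\Afk_{\xx}(\cdot,\proj_{\S_\xx}\Fx(\xx,y))$ inside $\Fxx$.
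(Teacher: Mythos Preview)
Your proposal follows the same three-step strategy as the paper: (1) solve the tangential equation on $\Mm$ via an implicit-function argument, (2) upgrade the resulting critical point to a global minimizer by the ri/rbd dichotomy, and (3) read off the Jacobian. The paper packages Step~1 by invoking \citep[Theorem~5.5]{Lewis-PartlySmooth} on the partly smooth function $(\xx,y)\mapsto F(\xx,y)+J(\xx)$ relative to $\Mm\times\RR^n$, which directly yields a \emph{unique} strong local minimizer $\solm(\bar y)$ together with persistence of $(\CondInj{\solm(\bar y)}{\bar y})$; your direct implicit-function-theorem route is equivalent but you must separately argue that positive definiteness persists by continuity. In the rbd branch the paper's argument is slightly sharper than your sketch: the point is simply that any $\bar\xx\in\Mc$ with $-\Fx(\bar\xx,\bar y)\in\rbd\partial J(\bar\xx)$ is a global minimizer of $(\regulP{\bar y})$, hence a critical point of $(\regulP{\bar y}_\Mm)$, and uniqueness from Step~1 forces $\bar\xx=\solm(\bar y)$; closedness of $\Aa_\Mm$ plays no role and can be dropped from your outline.
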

Theorem~\ref{thm-local}  can be extended to the case where the data fidelity is of the form $\F(\xx,\theta)$ for some parameter $\theta$, with no particular role of $y$ here.

\section{Sensitivity Analysis of $\msol(y)$}
\label{sec:sensmu}

We assume in this section that $\F$ takes the form \eqref{eq-fidelity-decompos} with 
\eql{\label{eq-stric-cvx}\tag{$C_{\text{dp}}$}
	\foralls (\mu,y) \in \RR^n \times \RR^n, \quad
	\Fxxo(\mu,y) \text{ is positive definite.}
}
This in turn implies that $\F_0(\cdot,y)$ is strictly convex for any $y$ (the converse is obviously not true). Recall that this condition is mild and holds in many situations, in particular for some losses \eqref{eq:fidexp} in the exponential family, see Section~\ref{sec:block_regularizations} for details.

We have the following simple lemma.
\begin{lem}\label{lem:unique}
Suppose the condition~\eqref{eq-stric-cvx} is satisfied. The following holds,
\begin{enumerate}[label=(\roman*)]
\item All minimizers of~\eqref{eq-group-lasso} share the same image under $\XX$ and $\J$.
\item If the partial smoothness submanifold $\Mm$ at $\xx$ is affine or linear, then \eqref{eq-injectivity-cond} holds if, and only if, $\Ker( \XX ) \cap \Ker(\Q(\xx)) \cap \T = \{ 0 \}$, where $\T=\T_\xx$ and $\Q(\xx)$ is given in Fact~\ref{fact:gradhess}.
\end{enumerate}
\end{lem}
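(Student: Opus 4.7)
The two claims are essentially independent. Part (i) is a classical strict convexity argument exploiting the factorization $\F(\xx,y)=\F_0(\XX\xx,y)$. Part (ii) is a direct consequence of Lemma~\ref{lem:injectivity-cond} once one checks that both $\Fxx(\xx,y)$ and $\Q(\xx)$ are positive semidefinite on $\T$ under the assumption that $\Mm$ is an affine subspace, together with an identification of the relevant kernels.

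\textbf{Proof of (i).} Let $\xx_1,\xx_2$ be two minimizers of \eqref{eq-group-lasso} and set $\xx_t = t\xx_1 + (1-t)\xx_2$ for $t\in(0,1)$. Assume for contradiction that $\XX\xx_1\neq\XX\xx_2$. Since $\F_0(\cdot,y)$ is strictly convex by \eqref{eq-stric-cvx} and $\J$ is convex, one has
\begin{equation*}
  \F_0(\XX\xx_t,y)+\J(\xx_t) < t\bigl[\F_0(\XX\xx_1,y)+\J(\xx_1)\bigr] + (1-t)\bigl[\F_0(\XX\xx_2,y)+\J(\xx_2)\bigr],
\end{equation*}
which contradicts the minimality of $\xx_1,\xx_2$. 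Hence $\XX\xx_1=\XX\xx_2$, so $\F(\xx_1,y)=\F(\xx_2,y)$, and consequently $\J(\xx_1)=\J(\xx_2)$ since the sum of the two is the common minimum value.

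\textbf{Proof of (ii).} Assume $\Mm$ is affine or linear around $\xx$, so that $\Mm=\xx+\T$ with $\T=\T_\xx$. By \eqref{eq:covhessFlin} and the chain rule applied to $\F(\xx,y)=\F_0(\XX\xx,y)$,
\begin{equation*}
  \Fxx(\xx,y) = \proj_{\T}\,\transp{\XX}\,\hess\F_0(\XX\xx,y)\,\XX\,\proj_{\T}.
\end{equation*}
Condition \eqref{eq-stric-cvx} makes $\hess\F_0(\XX\xx,y)$ positive definite, so for any $\xi\in\T$,
\begin{equation*}
  \dotp{\Fxx(\xx,y)\xi}{\xi} = \dotp{\hess\F_0(\XX\xx,y)\,\XX\xi}{\XX\xi} \geq 0,
\end{equation*}
with equality if and only if $\XX\xi=0$. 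Hence $\Fxx(\xx,y)\succeq 0$ on $\T$ and $\Ker(\Fxx(\xx,y))\cap\T = \Ker(\XX)\cap\T$. On the other hand, since $\Mm$ is affine, \eqref{eq:covhesslin} gives $\Q(\xx)=\proj_{\T}\hess\tilde\J(\xx)\proj_{\T}$, where $\tilde\J$ is a smooth representative of $\J$ on $\Mm$. But $\J$ is convex on $\RR^p$ and $\Mm$ is an affine subspace, so the restriction of $\J$ to $\Mm$ (which coincides with $\tilde\J$ locally) is convex; thus $\hess\tilde\J(\xx)\succeq 0$ on $\T$, and in particular $\Q(\xx)\succeq 0$ on $\T$. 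Both hypotheses of Lemma~\ref{lem:injectivity-cond} are therefore satisfied, and we conclude that \eqref{eq-injectivity-cond} is equivalent to
\begin{equation*}
  \Ker(\Fxx(\xx,y))\cap\Ker(\Q(\xx))\cap\T = \bigl(\Ker(\XX)\cap\T\bigr)\cap\Ker(\Q(\xx)) = \{0\}.
\end{equation*}

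\textbf{Main difficulty.} There is no significant obstacle: the only subtle point is to justify that the Riemannian Hessian $\Q(\xx)$ inherits positive semidefiniteness from convexity of $\J$, which in the affine case reduces to the standard fact that a convex function restricted to an affine subspace has a positive semidefinite Hessian along directions of that subspace.
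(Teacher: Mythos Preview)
Your proof is correct and follows essentially the same approach as the paper. The paper's proof of (i) simply cites \citep[Lemma~8]{vaiter2013model}, which contains exactly the strict-convexity argument you wrote out; for (ii) the paper just says ``specialization of Lemma~\ref{lem:injectivity-cond} using \eqref{eq-stric-cvx} and \eqref{eq:covhesslin}'', which amounts to your computation identifying $\Ker(\Fxx(\xx,y))\cap\T=\Ker(\XX)\cap\T$ via positive definiteness of $\hess\F_0$, the positive semidefiniteness of $\Q(\xx)$ in the affine case being already recorded in the last sentence of Lemma~\ref{lem:injectivity-cond} (and its proof).
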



Owing to this lemma, we can now define the prediction 
\eql{\label{eq-predictor}
	\msol(y) = X \widehat{\xx}(y)
} 
without ambiguity given any solution $\widehat{\xx}(y)$, which in turn defines a single-valued mapping $\msol$. The following theorem provides a closed-form expression of the local variations of $\msol$ as a function of perturbations of $y$. For this, we define the following set that rules out the points $y$ where $(\CondInj{\xxs(y)}{y})$ does not hold for any any minimizer.

\begin{defn}[Non-injectivity set]
The \emph{Non-injectivity set} $\Gg$ is
\begin{align*}
  \Gg = \enscond{y \notin \Hh}{\text{$(\CondInj{\xxs(y)}{y})$ does not hold for any minimizer $\xxs(y)$ of~\lasso}} ~.
\end{align*}
\end{defn}

\begin{thm}\label{thm-div}
  Under assumptions~\eqref{hyp-f-reg} and \eqref{eq-stric-cvx},
  the mapping $y \mapsto \msol(y)$ is $\Calt{1}(\RR^n \setminus (\Hh \cup \Gg))$.
  Moreover, for all $y \not\in \Hh \cup \Gg$, 
  \eql{\label{eq:diverg}
    \diverg(\msol)(y) \eqdef \tr(\jac \msol(y)) = \tr(\De(y))
  }
  where
  \begin{align*}
    \De(y) = -\XX_\T ~ ( \Fxx(\msol(y),y) + \Q(\xxs(y)) )^{+} 
    ~ \transp{\XX_\T} ~ \Fxyo(\msol(y),y) , \\
    \Fxx(\msol(y),y) = \transp{\XX_\T} \Fxxo(\msol(y),y) \XX_\T + \Afk_{\xx}\pa{\cdot,\transp{\XX_\S}\Fxo(\msol(y),y)}
  \end{align*}
  and $\xxs(y)$ is any solution of \lasso such that $(\CondInj{\xxs(y)}{y})$ holds
  and $\J \in \lsc(\RR^p)$ is partly smooth at $\xxs(y)$ relative to $\Mm$, with $\T=\S^\perp=\T_{\xxs(y)}$.   
\end{thm}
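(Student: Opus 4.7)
The plan is to reduce the theorem to a direct application of Theorem~\ref{thm-local} through two bridges: Lemma~\ref{lem:unique}(i), which lifts the single-valuedness of $\msol$ from the possibly set-valued $\xsoly$, and the chain rule through the decomposition $\F(\xx,y)=\F_0(\XX\xx,y)$.

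First I would fix an arbitrary $y\in\RR^n\setminus(\Hh\cup\Gg)$. By definition of $\Gg$, there exists at least one minimizer $\xxs(y)$ of \lasso such that $(\CondInj{\xxs(y)}{y})$ holds. Since $y\notin\Hh$ as well, Theorem~\ref{thm-local} applies at this particular minimizer: it furnishes an open neighborhood $\neighb$ of $y$ and a $\Calt{1}(\neighb)$ map $\solm:\neighb\to\Mm$ with $\solm(y)=\xxs(y)$ and $\solm(\bar y)$ a solution of $(P(\bar y))$ for every $\bar y\in\neighb$. Now invoke Lemma~\ref{lem:unique}(i): under~\eqref{eq-stric-cvx}, every minimizer of $(P(\bar y))$ has the same image under $\XX$, so $\msol(\bar y)=\XX\solm(\bar y)$ for all $\bar y\in\neighb$. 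As the composition of a linear map with a $\Calt{1}$ map, $\msol$ is $\Calt{1}$ on $\neighb$; since $y$ was arbitrary, $\msol\in\Calt{1}(\RR^n\setminus(\Hh\cup\Gg))$, and the local formula reads
\[
\jac\msol(\bar y)=\XX\,\jac\solm(\bar y).
\]

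Next I would plug the expression of $\jac\solm$ from~\eqref{eq-differential} into this identity and simplify. Because $\F$ factors through $\XX$, we have the elementary identities $\Fx(\xx,y)=\transp{\XX}\Fxo(\XX\xx,y)$, $\Fxy(\xx,y)=\transp{\XX}\Fxyo(\XX\xx,y)$, and $\hess\F(\xx,y)=\transp{\XX}\Fxxo(\XX\xx,y)\XX$. Combining these with the definition $\XX_{\T}=\XX\proj_{\T}$ (so that $\transp{\XX_\T}=\proj_{\T}\transp{\XX}$) and with formula~\eqref{eq:covhessF} yields
\[
\Fxx(\xxs(y),y)=\transp{\XX_\T}\Fxxo(\msol(y),y)\XX_\T+\Afk_{\xxs(y)}\!\bigl(\cdot,\transp{\XX_\S}\Fxo(\msol(y),y)\bigr),
\]
which is exactly the second displayed line of the theorem. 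Moreover $\proj_{\T}\Fxy(\xxs(y),y)=\transp{\XX_\T}\Fxyo(\msol(y),y)$. The pseudoinverse $(\Fxx+\Q)^{+}$ is well-defined as an operator on $\T$ thanks to $(\CondInj{\xxs(y)}{y})$, and since its range lies in $\T$ we have $\XX(\Fxx+\Q)^{+}=\XX\proj_{\T}(\Fxx+\Q)^{+}=\XX_\T(\Fxx+\Q)^{+}$. Assembling everything produces
\[
\jac\msol(y)=-\XX_\T\bigl(\Fxx(\xxs(y),y)+\Q(\xxs(y))\bigr)^{+}\transp{\XX_\T}\,\Fxyo(\msol(y),y)=\De(y),
\]
and taking the trace gives~\eqref{eq:diverg}.

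The only delicate point is independence of the right-hand side of the choice of minimizer $\xxs(y)$ used to instantiate Theorem~\ref{thm-local}: since $\msol(y)$ is single-valued by Lemma~\ref{lem:unique}(i), $\jac\msol(y)$ is intrinsic, hence so is $\De(y)$ even though the formula for $\De$ mentions the auxiliary $\xxs(y)$ through $\T$, $\S$, and $\Q$. Beyond this remark, the argument is essentially a chain-rule bookkeeping exercise, and no further obstacle is anticipated.
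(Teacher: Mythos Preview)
Your proposal is correct and follows essentially the same approach as the paper: fix $y\notin\Hh\cup\Gg$, pick a minimizer satisfying the restricted positive definiteness condition, invoke Theorem~\ref{thm-local} to get a local $\Calt{1}$ selection, use Lemma~\ref{lem:unique}(i) to pass to the single-valued $\msol=\XX\solm$, and then unwind the chain rule through $\F(\xx,y)=\F_0(\XX\xx,y)$ and \eqref{eq:covhessF}. Your write-up is in fact more explicit than the paper's (which compresses the last step to ``we applied the chain rule in~\eqref{eq:covhessF}''), and your closing remark on the intrinsic nature of $\De(y)$ via single-valuedness of $\msol$ is a welcome addition.
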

This result is proved in Section~\ref{sub:dof}.

A natural question that arises is whether the set $\Gg$ is of full (Hausdorff) dimension or not, and in particular, whether there always exists a solution $\xxs(y)$ such that $(\CondInj{\xxs(y)}{y})$ holds, i.e. $\Gg$ is empty. Though we cannot provide an affirmative answer to this for any partly smooth regularizer, and this has to be checked on a case-by-case basis, it turns out that $\Gg$ is indeed empty for many regularizers of interest as established in the next result.
\begin{prop}\label{prop-exist}
The set $\Gg$ is empty when: 
\begin{enumerate}[label=(\roman*)]
\item $\J \in \lsc(\RR^p)$ is polyhedral, and in particular, when $\J$ is the Lasso, the general Lasso or the $\linf$ penalties.
\item $\J$ is the general group Lasso penalty, and a fortiori the group Lasso.
\end{enumerate}
\end{prop}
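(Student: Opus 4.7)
Both items follow the same strategy: reduce the restricted positive definiteness condition $(\CondInj{\xx}{y})$ to an explicit algebraic condition on $\T_\xx$ using the Riemannian Hessian formulas from Section~\ref{subsec-restriction}, then produce a minimizer satisfying that condition by collapsing the active structure of an arbitrary minimizer one block at a time.

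For \emph{(i)}, Example~\ref{ex:lassohess} gives $\Q(\xx)=0$ and $\J$ is partly smooth relative to the affine manifold $\Mm=\xx+\T_\xx$. Combining \eqref{eq:covhessFlin} with assumption \eqref{eq-stric-cvx}, $\Fxx(\xx,y)$ is positive semidefinite on $\T_\xx$, so Lemma~\ref{lem:injectivity-cond} reduces $(\CondInj{\xx}{y})$ to $\Ker(\XX)\cap \T_\xx = \{0\}$. By Lemma~\ref{lem:unique}(i) the minimizer set $\mathcal{S}_y$ of \lasso equals $\{\xx\in\RR^p : \XX\xx=\msol(y),\ \J(\xx)\le \J^\star\}$, hence is a polyhedron. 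Assuming $\mathcal{S}_y$ contains no line --- automatic whenever $\J$ is coercive (Lasso, $\linf$) and otherwise a standing non-degeneracy on $(\XX,\J)$ --- pick an extreme point $\xx^\star$ of $\mathcal{S}_y$. If $0\ne h\in \Ker(\XX)\cap \T_{\xx^\star}$ existed, polyhedrality would make $\J$ affine on $\xx^\star+\T_{\xx^\star}$ near $\xx^\star$, say $\J(\xx^\star+th)=\J(\xx^\star)+t\lambda$ for small $|t|$. Since $\XX h=0$ also keeps $\F$ constant along the ray, minimality of $\xx^\star$ forces $\lambda=0$, so $\xx^\star\pm\varepsilon h\in\mathcal{S}_y$ for small $\varepsilon>0$, contradicting extremality. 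Hence $(\CondInj)$ holds at $\xx^\star$ and $y\notin\Gg$.

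For \emph{(ii)}, Example~\ref{ex:injgglasso} characterizes failure of $(\CondInj{\xx}{y})$ by the existence of $h\ne 0$ with $\XX h=0$, $D_b^* h=0$ for $b\in\Bb\setminus I_\xx$, and $D_b^* h=\alpha_b D_b^* \xx$ for $b\in I_\xx$. Starting from any minimizer $\xx_0$ where the condition fails, such a direction $h$ keeps $\XX(\xx_0+th)$ fixed and yields
\[
\J(\xx_0+th) = \sum_{b\in I_{\xx_0}} |1+t\alpha_b|\,\|D_b^* \xx_0\|_2,
\]
which is piecewise affine in $t$ with slope $\sum_b \alpha_b \|D_b^* \xx_0\|_2$ at $t=0$. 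Minimality of $\xx_0$ forces this slope to vanish, so $\xx_0+th$ remains a minimizer on the whole interval around $0$ where every factor $1+t\alpha_b$ keeps its sign. Let $t_1 = \min\{1/|\alpha_b| : \alpha_b < 0\}>0$ (finite as soon as some $\alpha_b<0$); then $\xx_1 = \xx_0 + t_1 h$ is still a minimizer, and $|I_{\xx_1}|<|I_{\xx_0}|$ since at least one block deactivates at $t_1$. Iterating and using that $|\Bb|<\infty$, the procedure terminates after finitely many steps at a minimizer satisfying $(\CondInj)$. The residual case where every $\alpha_b\ge 0$ (so the slope condition forces all $\alpha_b=0$) yields $D^* h=0$, i.e.\ $h\in\Ker(\XX)\cap\Ker(D^*)$, which is excluded by the standing non-degeneracy of the problem.

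\textbf{Main obstacle.} The delicate step is the inductive reduction in~(ii): one must verify that the limit point $\xx_1=\xx_0+t_1 h$ inherits minimality from its interior (which follows from continuity of the objective together with $\J$ being constant on $[0,t_1]$), and that at least one block strictly drops out at $t_1$ so the induction indeed decreases $|I_\xx|$. Case~(i) is comparatively direct, amounting to the classical Lasso extreme-point argument transported to arbitrary polyhedral partly smooth gauges via the Riemannian reformulation.
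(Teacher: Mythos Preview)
Your proposal is correct, and for item~(ii) it is essentially the paper's own argument: both move along a nonzero $h$ in the kernel described in Example~\ref{ex:injgglasso}, keep the objective constant on an open $t$-interval, pass to the endpoint by continuity to deactivate at least one block, and iterate until $|I_\xx|$ can no longer decrease; the terminal degenerate case $D^*h=0$ is dispatched via $\Ker(\XX)\cap\Ker(D^*)=\{0\}$, exactly as in the Remark following the paper's proof. The only cosmetic difference is that you deduce constancy of $\J$ on the ray from the vanishing one-sided slope, whereas the paper verifies the first-order condition~\eqref{eq:mincondglasso} directly for $v_t$.

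For item~(i) your route is genuinely different. The paper argues constructively: given $h\in\Ker(\XX)\cap\T_{\xxs}$, it walks along $\xxs+th$ until a new affine piece of $\J$ becomes active at some finite $t_0$, producing a minimizer with strictly smaller $\T$, and iterates. You instead observe that the solution set is a polyhedron (via Lemma~\ref{lem:unique}(i)), pick an extreme point, and derive a contradiction from affineness of $\J$ on $\Mm=\xx^\star+\T_{\xx^\star}$ if $(\CondInj{\xx^\star}{y})$ failed there. Your argument is shorter and avoids the explicit face-tracking of $I_\xx,K_\xx$, at the cost of being non-constructive and needing the solution set to admit an extreme point. The paper's iterative proof is in effect an explicit walk to such an extreme point, and it shares the same implicit caveat: if every denominator in its definition of $t_0$ vanishes (equivalently, the solution set contains a line), the iteration stalls. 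So the ``standing non-degeneracy'' you invoke is not an extra hypothesis relative to the paper---both arguments need it, you are simply more explicit about it.
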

The proof of these results is constructive.\\

We now exemplify the divergence formula \eqref{eq:diverg} when $\F_0$ is the squared loss.
\begin{ex}[Polyhedral penalty]
\label{ex:polyhdf}
Thanks to Example~\ref{ex:lassohess}, it is immediate to see that \eqref{eq:diverg} boils down to
\begin{equation*}
  \diverg(\msol)(y) = \rank \XX_{\T_{\xxs(y)}} = \dim \T_{\xxs(y)}
\end{equation*}
where we used the rank-nullity theorem and that Lemma~\ref{lem:unique}(ii) holds at $\xxs(y)$, which always exists by Proposition~\ref{prop-exist}.
\end{ex}

\begin{ex}[Lasso and General Lasso]
Combining together Example~\ref{ex:analassoM} and Example~\ref{ex:polyhdf} yields
\begin{equation*}
  \diverg(\msol)(y) = \dim \Ker(D^*_{\Lambda^c}), \quad \Lambda = \supp(D^* \xxs(y)) ~,
\end{equation*}
where $\xxs(y)$ is such that Lemma~\ref{lem:unique}(ii) holds. For the Lasso, Example~\ref{ex:lassoM} allows to specialize the formula to
\begin{equation*}
  \diverg(\msol)(y) = \abs{\supp(\xxs(y))}.
\end{equation*}
The general Lasso case was investigated in~\citep{vaiter-local-behavior} and \citep{tibshirani2012dof}, and the Lasso in \citep{2012-kachour-statsinica} and  \citep{tibshirani2012dof}.
\end{ex}

\begin{ex}[$\linf$ Anti-sparsity]
By virtue of Example~\ref{ex:polyhdf} and Example~\ref{ex:linfM}, we obtain in this case
\begin{equation*}
  \diverg(\msol)(y) = p - \abs{I} + 1, \qwhereq
	I = \enscond{i}{\xxs_i(y) = \normi{\xxs(y)}}
\end{equation*}
and $\xxs(y)$ is such that Lemma~\ref{lem:unique}(ii) holds, and such a vector always exists by Proposition~\ref{prop-exist}.
\end{ex}

\begin{ex}[Group Lasso and General Group Lasso]
\label{ex:divglasso}
For the general group Lasso, piecing together Example~\ref{ex:gglassoM} and Example~\ref{ex:gglassohess}, it follows that
\begin{equation*}
  \diverg(\msol)(y) = \tr\pa{\XX_{\T}\pa{\transp{\XX_{\T}}\XX_{\T} + \proj_{\T} D\pa{\delta_{D^* \xxs(y)} \circ Q_{(D^* \xxs(y))^\perp}}D^*\proj_{\T}}^{+}\transp{\XX_{\T}}}
\end{equation*}
where $\T=\Ker(D^*_{\Lambda^c})$, $\Lambda = \bs(D^* \xxs(y))$, and $\xxs(y)$ is such that Lemma~\ref{lem:unique}(ii) holds; such a vector always exists by Proposition~\ref{prop-exist}. For the group Lasso, we get using Example~\ref{ex:glassoM} that
\begin{equation*}
  \diverg(\msol)(y) = \tr\pa{\XX_\Lambda\pa{\transp{\XX_\Lambda}\XX_\Lambda + \bpa{\delta_{D^* \xxs(y)} \circ Q_{(D^* \xxs(y))^\perp}}_{\Lambda,\Lambda}}^{-1}\transp{\XX_\Lambda}}
\end{equation*}
where $\bpa{\delta_{D^* \xxs(y)} \circ Q_{(D^* \xxs(y))^\perp}}_{\Lambda,\Lambda}$ is the submatrix whose rows and columns are those of $\delta_{D^* \xxs(y)} \circ Q_{(D^* \xxs(y))^\perp}$ indexed by $\Lambda=\bs(\xxs(y))$. This result was proved in~\citep{vaiter-icml-workshops} in the overdetermined case. An immediate consequence of this formula is obtained when $\XX$ is orthonormal\footnote{Obviously, Lemma~\ref{lem:unique}(ii) holds in such a case at the unique minimizer $\xxs(y)$.}, in which case one recovers the expression of \citet{yuan2006model},
\begin{equation*}
  \diverg(\msol)(y) = \abs{\Lambda} - \sum_{b \in \Bb, D^*_b \xxs(y) \neq 0} \frac{\abs{b}-1}{\norm{y_b}} ~.
\end{equation*}
The general group Lasso formula is new to the best of our knowledge, and will be illustrated in the numerical experiments on the isotropic 2-D total variation regularization widely used in image processing.
\end{ex}

We could also provide a divergence formula for the nuclear norm, but as we discussed in Example~\ref{ex:injnuc}, we cannot always guarantee the existence of a solution that satisfies $(\CondInj{\xxs(y)}{y})$. However, one can still find other partly smooth functions $\J$ with a non-flat submanifold for which this existence can be certified. The function of Example~\ref{ex:maxnormM} is again a prototypical example.
\begin{ex}[$\J=\max(\norm{\cdot}-1,0)$]
\label{ex:divmaxnorm} 
For $\xx \in \sph^{p-1}$. If $\xx$ is a minimizer of \lasso is not a minimizer of $F(\cdot,y)$, from Example~\ref{ex:injmaxnorm}, we have that $\Fxx(\xx,y)$  is positive definite on $\T=\T_\xx$. Thus, we get for the case of the squared loss, that
\[
\diverg(\msol)(y) = \tr\pa{\XX_{\T}\pa{\transp{\XX_{\T}}\XX_{\T} + \proj_{\T} \dotp{\XX\xx}{y-\XX\xx}}^{+}\transp{\XX_{\T}}} .
\]
\end{ex} 


\section{Degrees of Freedom and Unbiased Risk Estimation}
\label{sec:dof}

From now on, we will assume that
\begin{align}
\label{hyp-tt} \tag{$C_\Mscr$}
	\text{the set $\Mscr$ is finite.}
\end{align}

Assumption~\eqref{hyp-tt} holds in many important cases, including the examples discussed in the paper: polyhedral penalties (e.g. the Lasso, general Lasso or $\linf$-norm), as well as for the group Lasso and its general form. 

Throughout this section, we use the same symbols to denote weak derivatives (whenever they exist) as for derivatives. Rigorously speaking, the identities have to be understood to hold Lebesgue-a.e.~\citep{EvansGariepy92}.

So far, we have shown that outside $\Hh \cup \Gg$, the mapping $y \mapsto \msol(y)$ enjoys (locally) nice smoothness properties, which in turn gives closed-form formula of its divergence. To establish that such formula holds Lebesgue a.e., a key argument that we need to show is that $\Hh$ is of negligible Lebesgue measure. This is where o-minimal geometry enters the picture. In turn, for $Y$ drawn from some appropriate probability measures with density with respect to the Lebesgue measure, this will allow us to establish unbiasedness of quadratic risk estimators. 

\subsection{O-minimal Geometry}

Roughly speaking, to be able to control the size of $\Hh$, the function $\J$ cannot be too oscillating in order to prevent pathological behaviours. We now briefly recall here the definition. Some important properties of o-minimal structures that are relevant to our context together with their proofs are collected in Section~\ref{sec-omin}. The interested reader may refer to~\citep{van-den-Dries-omin-book,coste1999omin} for a comprehensive account and further details on o-minimal structures.

\begin{defn}[Structure]\label{defn-omin}
A \emph{structure} $\omin$ expanding $\RR$ is a sequence $(\omin_k)_{k \in \NN}$ which satisfies the
following axioms:
	\begin{enumerate}
		\item Each $\omin_k$ is a Boolean algebra of subsets of $\RR^k$, with $\RR^k \in \omin_{k}$.
		\item Every semi-algebraic subset of $\RR^k$ is in $\omin_k$.
		\item If $A \in \omin_k$ and $B \in \omin_{k'}$, then $A \times B \in \omin_{k+k'}$.
		\item If $A \in \omin_{k+1}$, then $\Pi_{k+1,k}(A) \in \omin_k$, where $\Pi_{k+1,k}: \RR^{k+1} \to \RR^k$ is the projection on the first $k$ components.
	\end{enumerate}	
	The structure $\omin$ is said to be \emph{o-minimal} if, moreover, it satisfies
	\begin{enumerate}
		\item[5.] (o-minimality) Sets in $\omin_1$ are precisely the finite unions of intervals and points of~$\RR$.
	\end{enumerate}
\end{defn}
In the following, a set $A \in  \omin_k$ is said to be definable.

\begin{defn}[Definable set and function]
Let $\omin$ be an o-minimal structure. The elements of $\omin_k$ are called the \emph{definable subsets} of $\RR^p$, i.e. $\Om \subset \RR^k$ is definable if $\Om \in \omin_k$. A map $f : \Om \rightarrow \RR^m$ is said to be definable if its graph $\Gg(f) = \enscond{(x,u) \in \Om \times \RR^m}{u=f(x)} \subseteq \RR^{k} \times \RR^{m}$ is a definable subset of $\RR^{k} \times \RR^m$ (in which case $m$ times applications of axiom 4 implies that $\Omega$ is definable).
\end{defn}

A fundamental class of o-minimal structures is the collection of semi-algebraic sets, in which case axiom~4 is actually a property known as the Tarski-Seidenberg theorem~\citep{coste2002intro}. For example, in the special case where $q$ is a rational number, $J=\norm{\cdot}_q$ is semi-algebraic. When $q \in \RR$ is not rational, $\norm{\cdot}_q$ is not semi-algebraic, however, it can be shown to be definable in an o-minimal structure. To see this, we recall from \citep[Example~5 and Property~5.2]{vandenDriesMiller96} that there exists a (polynomially bounded) o-minimal structure that contains the family of functions $\enscond{t > 0}{t^\gamma, \gamma \in \RR}$ and restricted analytic functions. Functions $\F_0$ that correspond to the exponential family losses introduced in Example~\ref{exp-glm} are also definable. \\

Our o-minimality assumptions requires the existence of an o-minimal structure $\omin$ such that
\begin{equation}
\label{eq-condition-omin}\tag{$C_{\omin}$}
\begin{split}
	\text{$F$, $\J$ and $\Mm$, $\forall \Mm \in \Mscr$, are definable in } \omin.
\end{split}
\end{equation}

\subsection{Degrees of Freedom and Unbiased Risk Estimation}
We assume in this section that $\F$ takes the form \eqref{eq-fidelity-decompos} and that 
\eql{\label{eq-strong-convex}\tag{$C_{\mathrm{sconv}}$}
	\foralls y \in \RR^n, \quad
	\F_0(\cdot,y) \text{ is strongly convex with modulus } \tau
}
and
\begin{equation}\label{cnd-hess-bord}\tag{$C_{L}$}
      \exists L>0, \quad
      \sup_{(\mu,y) \in \RR^n\times\RR^n}\norm{\Fxyo(\mu,y)} \leq L .
\end{equation}

Obviously, assumption \eqref{eq-strong-convex} implies \eqref{eq-stric-cvx}, and thus the claims of the previous section remain true. Moreover, this assumption holds for the squared loss, but also for some losses of the exponential family \eqref{eq:fidexp}, possibly adding a small quadratic term in $\beta$. As far as assumption \eqref{cnd-hess-bord} is concerned, it is easy to check that it is fulfilled with $L=1$ for any loss of the exponential family~\eqref{eq:fidexp}, since $\Fxyo(\mu,y) = -\Id$.

\myparagraph{Non-linear Gaussian regression.}
Assume that the observation model \eqref{eq:linear-problem} specializes to $Y \sim \Nn(h(\XX\xx_0),\si^2\Id_n)$, where $h$ is Lipschitz continuous.

\begin{thm}\label{thm-dof}
	The following holds.
	\begin{enumerate}[label=(\roman*)] 
	\item Under condition \eqref{eq-condition-omin}, $\Hh$ is of Lebesgue measure zero;
	\item Under conditions \eqref{eq-strong-convex} and \eqref{cnd-hess-bord}, $h \circ \msol$ is Lipschitz continuous, hence weakly differentiable, with an essentially bounded gradient.
	\item If conditions~\eqref{eq-condition-omin}, \eqref{eq-strong-convex}, \eqref{hyp-f-reg} and \eqref{cnd-hess-bord} hold, and~$\Gg$ is of zero-Lebesgue measure, then, 	
	\begin{enumerate}[label=(\alph*)]
	\item $\widehat \DOF = \tr(\jac h(\msol(Y))\De(Y))$ is an unbiased estimate of $\DOF=\EE(\diverg(h \circ \msol(Y)))$, where $\De(Y)$ is as given in Theorem~\ref{thm-div}.
	\item The $\SURE$
	\begin{align}
	\label{eq:sure}
    \SURE(h \circ \msol)(Y) = &
    \norm{Y - h(\msol(Y))}^2
    + 2 \sigma^2 \widehat \DOF
    - n \sigma^2
	\end{align}
	is an unbiased estimator of the risk $\EE\pa{\norm{h(\msol(Y)) - h(\mu_0)}^2}$.
	\end{enumerate}
	\end{enumerate}
\end{thm}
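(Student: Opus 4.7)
My plan proceeds along the three parts of the statement.

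For (i), since $\Mscr$ is finite under \eqref{hyp-tt}, it suffices to show each $\Hh_{\Mm}$ is Lebesgue-null. Under \eqref{eq-condition-omin}, $\F$, $\J$ and $\Mm$ are definable in $\omin$, so the subdifferential $\partial \J$ is a definable set-valued mapping (its graph being cut out by a definable first-order condition); consequently its affine hull and relative boundary are definable, and the inclusion $-\Fx(\xx,y)\in\rbd\partial \J(\xx)$ carves out a definable subset $\Aa_{\Mm}\subset\RR^n\times\Mc$. Axiom~4 then yields definability of $\Pi_{n+p,n}(\Aa_{\Mm})$ in $\RR^n$. I would invoke the cell decomposition theorem from Section~\ref{sec-omin}: the topological boundary of a definable set has dimension strictly smaller than the set itself, and any definable subset of $\RR^n$ of dimension less than $n$ is Lebesgue-negligible. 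This is precisely where placing $\bd$ inside the definition of $\Hh_{\Mm}$ pays off, and a finite union concludes (i).

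For (ii), I would not use the pointwise formula of Theorem~\ref{thm-div} but argue directly by monotonicity of the subdifferential. Writing Fermat's rule at two points $y_1,y_2$, $-\XX^*\Fxo(\msol(y_i),y_i)\in\partial \J(\widehat\xx(y_i))$, monotonicity of $\partial \J$ yields $\langle \Fxo(\msol(y_1),y_1)-\Fxo(\msol(y_2),y_2),\msol(y_1)-\msol(y_2)\rangle\le 0$. Inserting $\pm \Fxo(\msol(y_2),y_1)$ and rearranging gives
\begin{equation*}
\langle \Fxo(\msol(y_1),y_1)-\Fxo(\msol(y_2),y_1),\msol(y_1)-\msol(y_2)\rangle\le \langle \Fxo(\msol(y_2),y_2)-\Fxo(\msol(y_2),y_1),\msol(y_1)-\msol(y_2)\rangle.
\end{equation*}
The LHS is bounded below by $\tau\|\msol(y_1)-\msol(y_2)\|^2$ via strong convexity \eqref{eq-strong-convex}; the RHS is bounded above by $L\|y_1-y_2\|\|\msol(y_1)-\msol(y_2)\|$ via the fundamental theorem of calculus combined with \eqref{cnd-hess-bord}. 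Dividing gives $\|\msol(y_1)-\msol(y_2)\|\le(L/\tau)\|y_1-y_2\|$. Composition with the Lipschitz non-linearity $h$ preserves Lipschitz regularity, and Rademacher's theorem yields weak differentiability with essentially bounded weak gradient.

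For (iii), combine (i), (ii) and Theorem~\ref{thm-div}. Outside $\Hh\cup\Gg$, a Lebesgue-null set by (i) and the hypothesis on $\Gg$, the mapping $\msol$ is $\Calt{1}$ with classical Jacobian $\De(y)$; by the chain rule, $h\circ\msol$ is $\Calt{1}$ there with Jacobian $\jac h(\msol(y))\De(y)$. Combined with the global Lipschitz regularity from (ii), this expression is a version of the weak Jacobian of $h\circ\msol$ and is essentially bounded. Stein's lemma applied coordinatewise then gives
\begin{equation*}
\DOF = \EE[\diverg(h\circ\msol)(Y)] = \EE[\tr(\jac h(\msol(Y))\De(Y))],
\end{equation*}
proving (a). For (b), expand
\begin{equation*}
\|Y-h(\msol(Y))\|^2 = \|Y-\mu_0\|^2 + 2\langle Y-\mu_0,\,\mu_0-h(\msol(Y))\rangle + \|\mu_0-h(\msol(Y))\|^2,
\end{equation*}
take expectation (using $\EE\|Y-\mu_0\|^2=n\sigma^2$), and apply Stein's lemma to the cross term with $g(y)=\mu_0-h(\msol(y))$, which is Lipschitz by (ii); rearranging produces the SURE identity.

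The main obstacle is (i): one must verify that definability is preserved through a somewhat intricate chain of set-valued operations (subdifferential, affine hull, relative boundary, projection of a set involving a manifold), and only then invoke the dimension-reduction property of the boundary operator to conclude Lebesgue negligibility. Parts (ii) and (iii) reduce, once the technical groundwork is in place, to a standard convex-monotonicity argument and a Lipschitz application of Stein's identity.
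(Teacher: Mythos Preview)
Your proposal is correct. Parts~(i) and~(iii) follow essentially the same route as the paper: for~(i), you chain definability through $\partial\J$, $\rbd\partial\J$, $\Aa_{\Mm}$ and its projection, then use that the boundary of a definable set has strictly smaller dimension and conclude by the finite union over $\Mscr$; for~(iii), you invoke the chain rule on $\RR^n\setminus(\Hh\cup\Gg)$ together with Lipschitz regularity from~(ii) to apply Stein's lemma, exactly as the paper does.

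Part~(ii), however, differs genuinely from the paper's argument. The paper proceeds by rewriting~\eqref{eq-group-lasso} as a proximal-point problem in the variable $\mu=\XX\xx$ (introducing the infimal pre-image functional $(\XX\J)$) and then invokes a perturbation result from \citet[Proposition~4.32]{BonnansShapiro2000} to obtain Lipschitz continuity of $\msol$ with constant $2L/\tau$. Your route is more self-contained: you apply monotonicity of $\partial\J$ directly at two minimizers, pull the adjoint $\XX^*$ through to land on $\msol(y_1)-\msol(y_2)$, then split using strong monotonicity of $\Fxo(\cdot,y_1)$ on one side and the mixed-derivative bound~\eqref{cnd-hess-bord} on the other. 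This avoids the proximal reformulation and the external reference entirely, and in fact yields the sharper constant $L/\tau$. The paper's reformulation has the minor advantage of making explicit that $\msol$ is the unique proximal point of a proper closed convex function (hence existence and single-valuedness are immediate), whereas in your argument you rely implicitly on Lemma~\ref{lem:unique} for single-valuedness of $\msol$; this is harmless but worth stating.
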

This theorem is proved in Section~\ref{sub:sure}.

\myparagraph{GLM with the continuous exponential family.}
Assume that the observation model \eqref{eq:linear-problem} corresponds to the GLM with a distribution which belongs to a continuous standard exponential family as parameterized in \eqref{eq:pdfexp}. From the latter, we have 
\eq{
	\grad \log B(y)=\pa{\frac{\partial \log B_i(y_i)}{\partial y_i}}_i.
}

\begin{thm}\label{thm-dof-exp}
	Suppose that conditions~\eqref{eq-condition-omin}, \eqref{eq-strong-convex}, \eqref{hyp-f-reg} and \eqref{cnd-hess-bord} hold, and~$\Gg$ is of zero-Lebesgue measure.
	Then,
	\begin{enumerate}[label=(\roman*)] 
	\item $\widehat \DOF = \tr(\De(Y))$ is an unbiased estimate of $\DOF=\EE(\diverg(\msol(Y)))$.
	\item The $\SURE$
	\begin{align}
	\label{eq:gsureexp}
    \SURE(\msol)(Y) = &
    \norm{\grad \log B(Y) - \msol(Y)}^2
    + 2 \widehat \DOF
    - (\norm{\grad \log B(Y)}^2 - \norm{\mu_0}^2)
	\end{align}
	is an unbiased estimator of the risk $\EE\pa{\norm{\msol(Y) - \mu_0}^2}$.
	\end{enumerate}
\end{thm}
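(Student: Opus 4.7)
The plan is to mirror the proof of Theorem~\ref{thm-dof}, replacing the Gaussian Stein's lemma by its generalization to the continuous exponential family. The first task is to establish the regularity kernel common to both parts. Under \eqref{eq-condition-omin}, the set $\Hh$ is Lebesgue-negligible, by the same cell-decomposition argument (to be invoked in Section~\ref{sec-omin}) as in Theorem~\ref{thm-dof}(i), which is independent of the noise model. Under \eqref{eq-strong-convex} together with \eqref{cnd-hess-bord}, the prediction $\msol:\RR^n \to \RR^n$ is globally Lipschitz: writing the inclusion $-\transp{\XX}\Fxo(\msol(y),y) \in \partial \J(\xsoly(y))$ at two observations $y,y'$ and combining monotonicity of $\partial \J$, $\tau$-strong convexity of $\F_0(\cdot,y)$ in its first argument, and the uniform bound $\norm{\Fxyo}\leq L$, I would obtain $\tau\norm{\msol(y)-\msol(y')}\leq L\norm{y-y'}$. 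By Rademacher's theorem, $\msol$ is then Lebesgue-a.e.~differentiable with essentially bounded weak Jacobian. Since $\Gg$ is assumed Lebesgue-negligible as well, Theorem~\ref{thm-div} yields $\jac\msol(y)=\De(y)$ a.e., and hence $\diverg\msol(y)=\tr\De(y)=\widehat\DOF(y)$ a.e.

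Part (i) then follows at once. Since $Y$ is absolutely continuous with respect to Lebesgue measure (its density is given by \eqref{eq:pdfexp}), the equality $\widehat\DOF(Y)=\diverg\msol(Y)$ holds $\PP$-almost surely; integrability follows from the essential boundedness of $\jac\msol$; and taking expectations gives $\EE[\widehat\DOF] = \EE[\diverg\msol(Y)] = \DOF$.

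For part (ii), the decisive ingredient is a Stein-type identity for the exponential family. Differentiating $\log f(y|\mu_0) = \sum_{i} \log B_i(y_i) + \dotp{y}{\mu_0} - \sum_{i} \varphi_i\pa{(\mu_0)_i}$ with respect to $y$ yields $\grad_y \log f(y|\mu_0) = \grad\log B(y) + \mu_0$. A coordinate-wise integration by parts applied to the Lipschitz vector field $\msol$ then gives
\[
\EE\bigl[\dotp{\msol(Y)}{\grad\log B(Y) + \mu_0}\bigr] = -\EE[\diverg\msol(Y)],
\]
provided the boundary contributions vanish (see below). Solving for $\EE\dotp{\msol(Y)}{\mu_0}$ and substituting into the risk decomposition $R = \EE\norm{\msol(Y)}^2 - 2\EE\dotp{\msol(Y)}{\mu_0} + \norm{\mu_0}^2$, then regrouping the resulting terms into squared norms, yields the expression \eqref{eq:gsureexp} for $\SURE(\msol)(Y)$ and proves $\EE[\SURE(\msol)(Y)] = R$.

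The main obstacle is the rigorous justification of this integration by parts. The density $f(\cdot|\mu_0)$ need not decay uniformly on $\RR^n$, since its natural parameter space may be a proper subset and $B$ can be fairly general, so the vanishing of the boundary terms must be obtained by an approximation argument: truncate $\msol$ to a large ball, mollify to get $\phi_\varepsilon \in C^\infty_c(\RR^n;\RR^n)$, apply the classical divergence theorem to $\phi_\varepsilon f$, and pass to the limit using integrability of $f$, the essential boundedness of $\jac\msol$ established in the first paragraph, and dominated convergence. This is the step where the Lipschitz regularity of $\msol$ does all the heavy lifting.
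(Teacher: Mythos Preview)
Your proposal is correct and follows the same overall architecture as the paper: reuse the regularity results of Theorem~\ref{thm-dof} (negligibility of $\Hh$ via o-minimality, Lipschitz continuity of $\msol$ hence weak differentiability, and the a.e.\ identification $\jac\msol=\De$ from Theorem~\ref{thm-div}), then feed this into a generalized Stein identity for the continuous exponential family.

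Two small methodological differences are worth noting. First, for Lipschitz continuity of $\msol$, the paper rewrites~\eqref{eq-group-lasso} as a proximal point computation for $G(\cdot,y)+(\XX\J)$ and invokes \citep[Proposition~4.32]{BonnansShapiro2000}; your direct monotonicity argument from the optimality inclusion is equally valid and arguably more elementary, yielding the same constant $L/\tau$ up to a factor. Second, for part~(ii) the paper does not re-derive the exponential-family Stein identity at all: it simply plugs the unbiased $\widehat\DOF$ into \citep[Theorem~1]{eldar-gsure} and rearranges. Your plan to establish the identity $\EE[\dotp{\msol(Y)}{\nabla\log B(Y)+\mu_0}]=-\EE[\diverg\msol(Y)]$ from scratch via truncation/mollification is exactly what underlies that cited result, so you are reproducing the content of the reference rather than taking a different route. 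The trade-off is the usual one: the paper's citation is shorter and offloads the integrability and boundary-term verifications, while your derivation is self-contained but must carry those technicalities explicitly, which you correctly flag as the main obstacle.
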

This theorem is proved in Section~\ref{sub:sure}. Recall from Section~\ref{sec:sensmu} that there are many regularizers where $\Gg$ is indeed empty, and for which Theorem~\ref{thm-dof} and \ref{thm-dof-exp} then apply.\\

Though $\SURE(\msol)(Y)$ depends on $\mu_0$, which is obviously unknown, it is only through an additive constant, which makes it suitable for parameter selection by risk minimization. Moreover, even if it is not stated here explicitly, Theorem~\ref{thm-dof-exp} can be extended to unbiasedly estimate other measures of the risk, including the {\it projection} risk, or the {\it estimation} risk (in the full rank case) through the Generalized Stein Unbiased Risk Estimator as proposed in \citep[Section~IV]{eldar-gsure}, see also \citep{vaiter-local-behavior} in the Gaussian case.


\section{Simulation results}
\label{sec:sim}

\paragraph{Experimental setting.}
In this section, we illustrate the efficiency of the proposed DOF estimator on
a parameter selection problem in the context of some imaging inverse problems.
More precisely, we consider the linear Gaussian regression model
$
  Y \sim \Nn(\XX\xx_0, \sigma^2 \Id_n)
$
where $\xx_0 \in \RR^{p = p_1 \times p_2}$ is a column-vectorized
version of an image defined on a 2-D discrete grid of size $p_1 \times p_2$.
The estimation of $\xx_0$ is achieved by solving~\eqref{eq-group-lasso} with
\eq{
  F(\xx, y) = F_0(\XX\xx, y) = \norm{\XX \xx - y}^2
  \qandq
  J(\xx) = \lambda \norm{D^* \xx}_{1,2}
}
where $D^* \xx \in \RR^{p \times 2}$ is the 2-D discrete gradient vector field of the image $\xx$, and $\lambda > 0$ is the regularization parameter. Clearly, $J$ is the isotropic total variation regularization \citep{rudin1992nonlinear}, which is a special case of the general group Lasso penalty~\eqref{lun-deux-analysis} for blocks of size $2$. 
 
We aim at proposing an automatic and objective way to choose $\lambda$. This can be achieved typically by minimizing the SURE given in~\eqref{eq:sure} with $h$ being the identity, i.e.
\eq{
\SURE(\msol)(Y) =
    \norm{Y - \msol(Y)}^2 + 2 \sigma^2 \widehat \DOF - n \sigma^2
}
where $\widehat \DOF = \tr(\De(Y))$ according to Theorem~\ref{thm-dof}(iii)-(a), and the expression of $\De(Y)$ is obtained from that of the general group Lasso in Example~\ref{ex:divglasso} with $D^*$ the discrete 2-D gradient operator, and $-D$ is the discrete 2-D divergence operator. Owing to Proposition~\ref{prop-exist}(ii) and Theorem~\ref{thm-dof}(iii), the given SURE is indeed an unbiased estimator of the prediction risk.

As the image size $p$ can be large, the exact computation of $\tr(\De(y))$
can become computationally intractable. Instead, we devise an approach based on
Monte-Carlo (MC) simulations \citep[see,][for more details]{vonesch2008sure}, that is
\eq{
  \widehat\DOF^{\mathrm{MC}}(z) = \dotp{z}{\Delta(Y) z}
}
with $z$ a realization of $Z \sim \Nn(0, \Id_n)$. It is clear that $\EE_Z\pa{\widehat\DOF^{\mathrm{MC}}(Z)} = \widehat\DOF$.

It remains to compute the vector $\Delta(y) z$. This is achieved by taking $\Delta(y) z = \XX \nu$, where $\nu$ is a solution of
\begin{align*}
  \pa{\transp{\XX} \XX + \lambda D \bpa{\delta_{D^* \xxs(y)} \circ Q_{(D^* \xxs(y))^\perp}} D^*} \nu = \transp{\XX} z
  \qsubjq \nu \in T ,
\end{align*}
where we recall that $\T=\Ker(D^*_{\Lambda^c})$, $\Lambda = \bs(D^* \xxs(y))$.
Taking into account the constraint on $T$ through its Lagrange multiplier $\zeta$, solving for $\nu$ boils down to solving
the following linear system with a symmetric and positive-definite matrix
\begin{align}\label{eq-sdp-dof}
  \begin{pmatrix}
    \transp{\XX} \XX + \lambda D \bpa{\delta_{D^* \xxs(y)} \circ Q_{(D^* \xxs(y))^\perp}} D^*
    & ~~ &
    D_{\Lambda^c}\\
    D^*_{\Lambda^c} & ~~ & 0\\
  \end{pmatrix}
  \begin{pmatrix}
    \nu \\
    \zeta\\
  \end{pmatrix}
  =
  \begin{pmatrix}
    \transp{\XX} z\\
    0\\
  \end{pmatrix}.
\end{align}

\begin{figure}[!t]
\centering
\subfigure[]{\includegraphics[width=0.32\linewidth,viewport=1 1 41 33,clip]{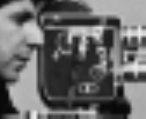}}\hfill%
\subfigure[]{\includegraphics[width=0.32\linewidth,viewport=1 1 41 33,clip]{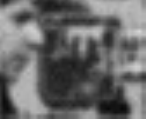}}\hfill%
\subfigure[]{\includegraphics[width=0.32\linewidth,viewport=1 1 41 33,clip]{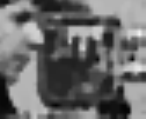}}\\
\subfigure[]{\includegraphics[width=0.32\linewidth]{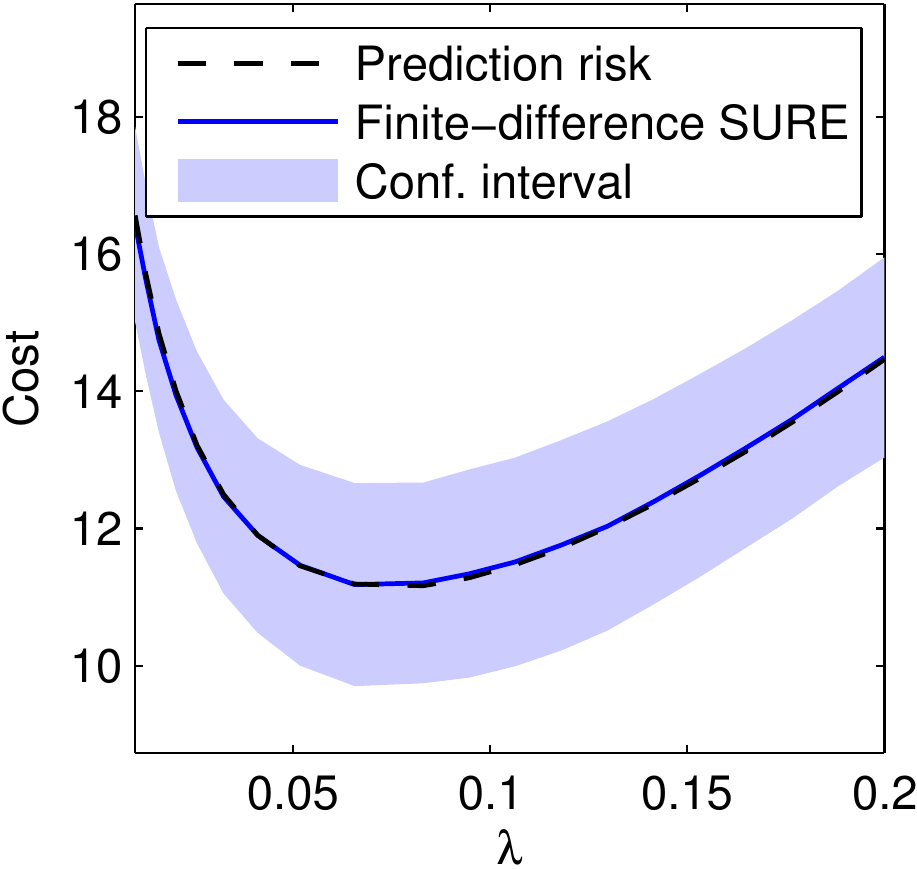}}\hfill%
\subfigure[]{\includegraphics[width=0.32\linewidth]{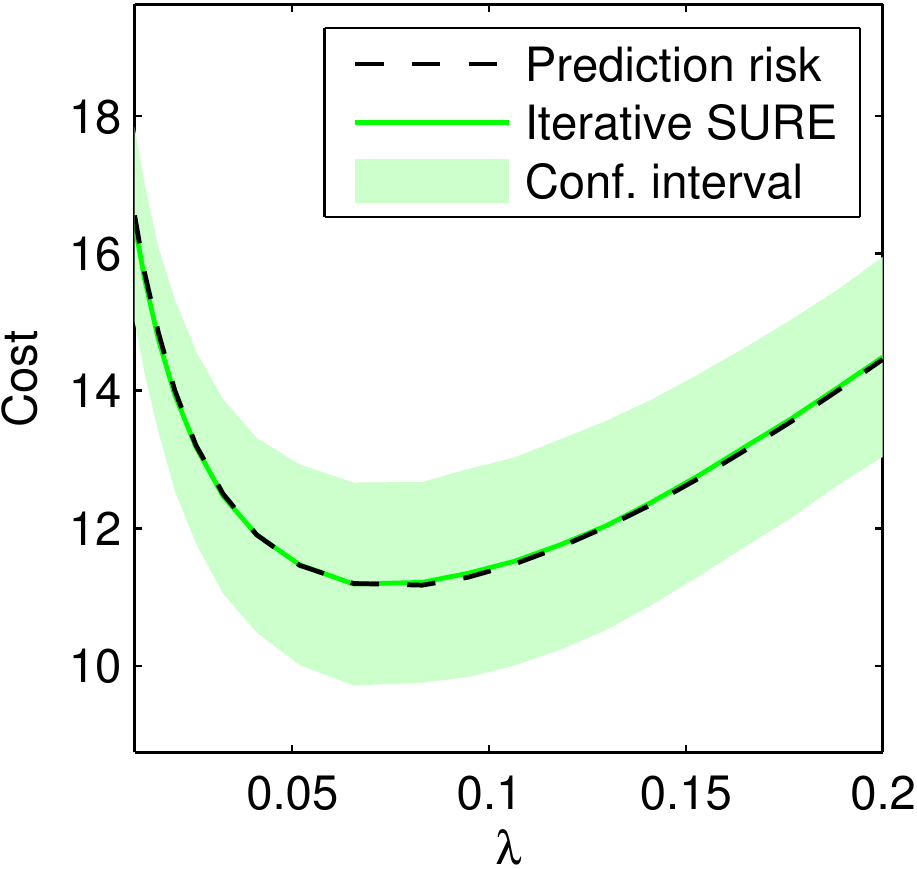}}\hfill%
\subfigure[]{\includegraphics[width=0.32\linewidth]{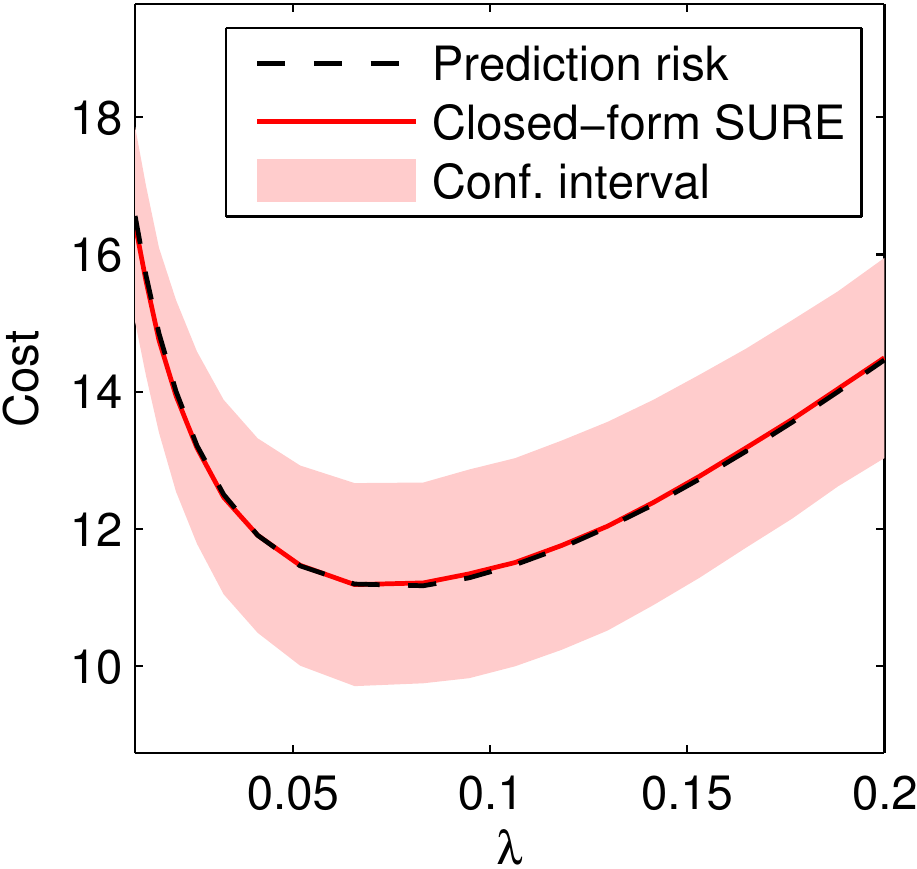}}
\caption{
  (a) Original image $\xx_0$.
  (b) Blurry observation $y$.
  (c) $\xxs(y)$ obtained for the value of $\lambda$ minimizing the SURE estimate.
  (d-f) Prediction risk, average SURE and its confidence interval ($\pm$ standard deviation)
  as a function of $\lambda$ respectively for
  the finite difference approach \citep{ramani2008montecarlosure},
  the iterative approach \citep{vonesch2008sure},
  and our proposed approach.
}
\label{fig:sure-deconvolution}
\end{figure}

\begin{figure}[!t]
\centering
\subfigure[]{\includegraphics[width=0.32\linewidth,viewport=1 1 41 33,clip]{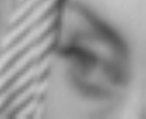}}\hfill%
\subfigure[]{\includegraphics[width=0.32\linewidth,viewport=1 1 41 33,clip]{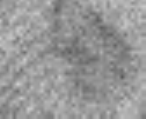}}\hfill%
\subfigure[]{\includegraphics[width=0.32\linewidth,viewport=1 1 41 33,clip]{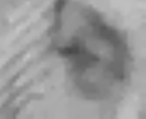}}\\
\subfigure[]{\includegraphics[width=0.32\linewidth]{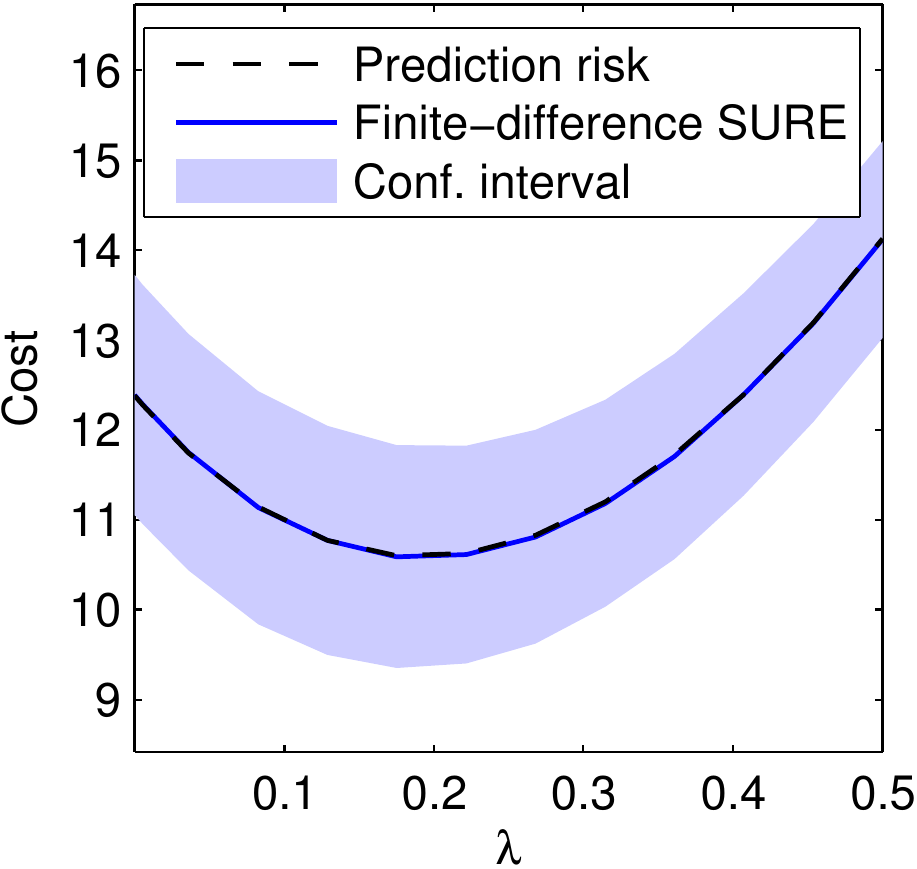}}\hfill%
\subfigure[]{\includegraphics[width=0.32\linewidth]{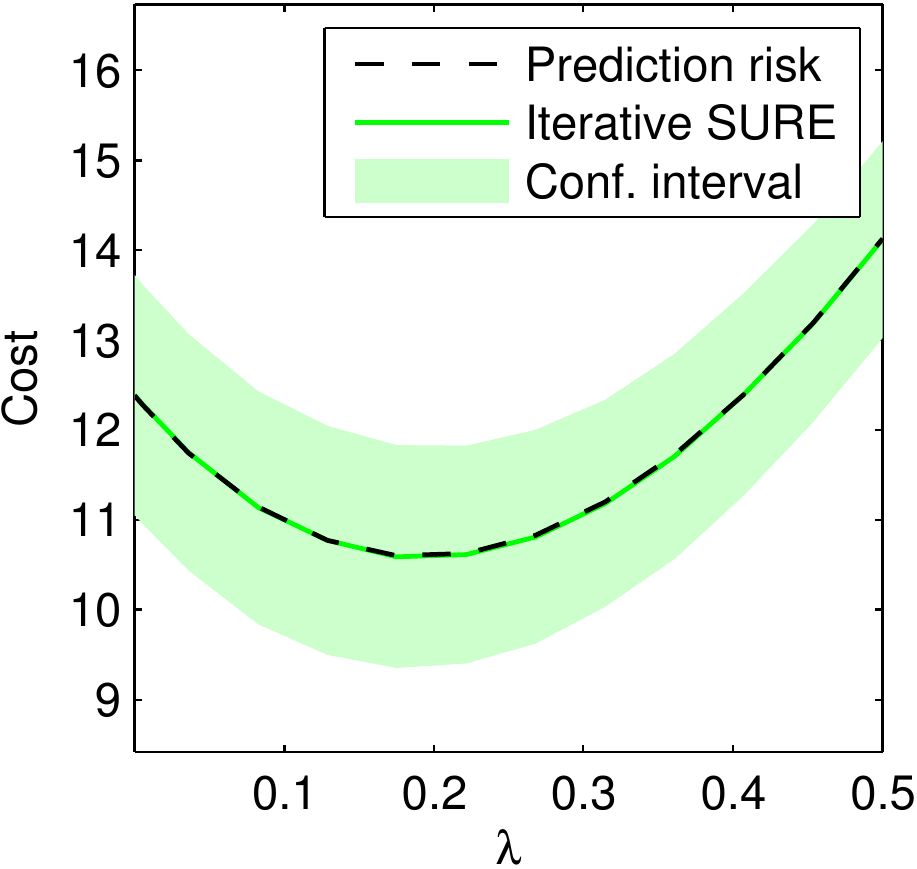}}\hfill%
\subfigure[]{\includegraphics[width=0.32\linewidth]{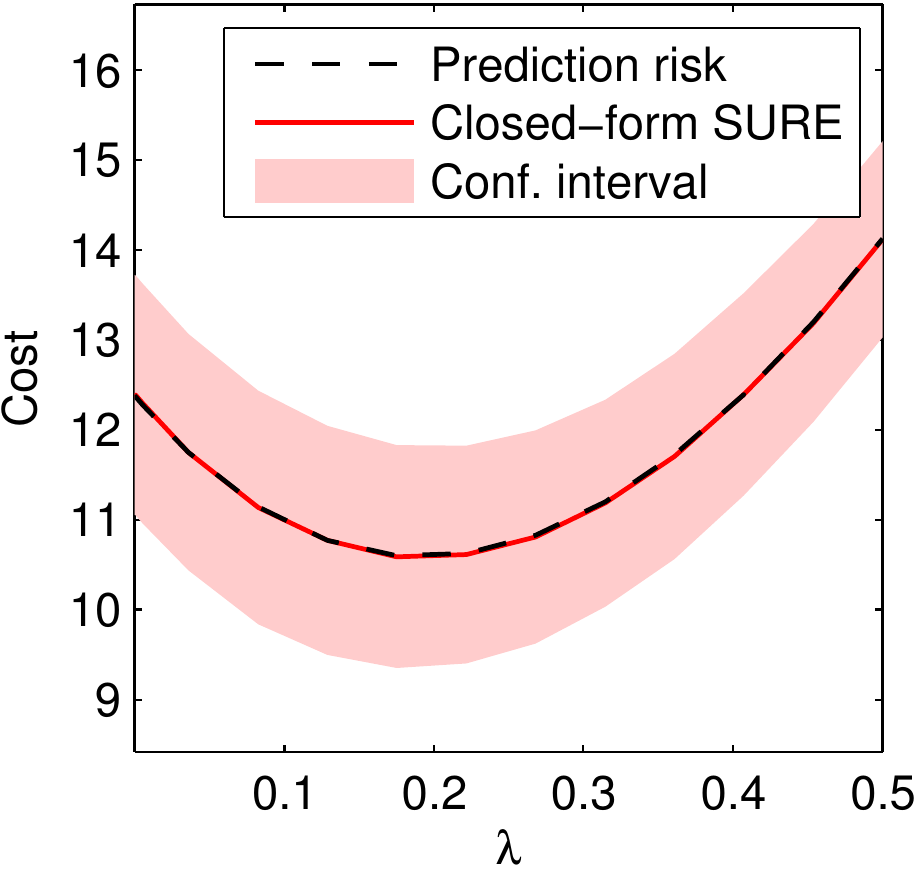}}
\caption{
  (a) Original image $\xx_0$.
  (b) Least squares estimate $\XX^+ y$.
  (c) $\xxs(y)$ obtained for the value of $\lambda$ minimizing the SURE estimate.
  (d-f) Prediction risk, average SURE and its confidence interval ($\pm$ standard deviation)
  as a function of $\lambda$ respectively for
  the finite difference approach \citep{ramani2008montecarlosure},
  the iterative approach \citep{deledalle2014stein},
  and our proposed approach.
}
\label{fig:sure-cs}
\end{figure}

\paragraph{Numerical solvers.}
In all experiments, optimization problem \eqref{eq-group-lasso} was solved
using Douglas-Rachford proximal splitting algorithm \citep{combettes2007douglas} with $2\cdot10^4$ iterations.
Once the support $\Lambda$ is identified with sufficiently high accuracy, the linear problem \eqref{eq-sdp-dof} is solved
using the generalized minimal residual method \citep[GMRES,][]{saad1986gmres}
with a relative accuracy of $10^{-7}$.

Our proposed SURE estimator is compared
for different values of $\lambda$
with the approach of \citep{ramani2008montecarlosure} based on finite difference
approximations, as well as
the approaches of \citep{vonesch2008sure,deledalle2014stein} based
on iterative chain rule differentiations.
All curves are averaged on $40$ independent realizations of $Y$ and $Z$ and their
corresponding confidence intervals at $\pm$ their standard deviation are displayed.

\paragraph{Deconvolution.}
We first consider an image of size $p = 34 \times 42$
with grayscale values ranging in $[0, 255]$
obtained from a close up of the standard {\it cameraman} image.
$\XX$ is a circulant matrix 
representing a periodic discrete convolution with a Gaussian kernel of width
$1.5$ pixel. The observation $y$ is finally obtained by adding a zero-mean white Gaussian noise with $\sigma = 5$.
Figure~\ref{fig:sure-deconvolution} depicts the evolution of the prediction risk and its SURE estimates as a function of $\lambda$.

\paragraph{Compressive sensing.}
We next consider an image of size $p = 34 \times 42$
with grayscale values ranging in $[0, 255]$
obtained from a close up of the standard {\it barbara} image. 
Now, $\XX$ is a matrix corresponding to the composition of a periodic discrete convolution with a square kernel, and a random sub-sampling matrix with $n/p = 0.5$. The noise standard deviation is again $\sigma = 5$.
Figure~\ref{fig:sure-cs} shows the evolution of the prediction risk and its SURE estimates as a function of $\lambda$.

\paragraph{Discussion.}
The three approaches seem to provide the same results with
average SURE curves that align very tightly with those of the prediction risk, with
relatively small standard deviation compared to the range of variation of the prediction risk.

It is worth observing that the SURE obtained with finite differences \citep{ramani2008montecarlosure} 
or with iterative differentiations \citep{vonesch2008sure,deledalle2014stein} estimate the risk at the last iterate
provided by the optimization algorithm to solve~\eqref{eq-group-lasso}, which is not exactly  $\xxs(y)$ in general.
In fact, what is important is not $\xxs(y)$ by itself but rather its group support $\Lambda$. 
Thus, provided $\Lambda$ has been perfectly identified, the three approaches
provide, as observed, the same estimate of the risk up to machine precision.
It may then be important to run the solver with a large number of iterations
in order to provide an accurate estimation of the risk.
Even more important, solutions of \eqref{eq-sdp-dof} should be
accurate enough to avoid bias in the estimation.
The choice of $2 \cdot 10^4$ iterations for Douglas-Rachford and
relative accuracy of $10^{-7}$ for GMRES appears in our simulations as a
good trade-off between negligible bias and reasonable computational time.

\section{Proofs} 
\label{sec:proofs}

This section details the proofs of our results.

\subsection{Preparatory lemma}

By standard arguments of convex analysis, the following lemma gives the first-order sufficient and necessary optimality condition of a minimizer of \eqref{eq-group-lasso}.

\begin{lem}\label{lem:first-order}
  A vector $\xxs(y) \in \RR^p$  is a minimizer of \eqref{eq-group-lasso} if, and only if, 
  \eq{
  	- \Fx(\xxs(y),y) \in \partial \J(\xxs(y)).
  }
  If $\J$ is partly smooth at $\xxs(y)$ relative to $\Mm$, then
  \begin{equation*}
    -\FxM(\xxs(y),y) = \JxM(\xxs(y)) = \e{\xxs(y)} .
  \end{equation*}
\end{lem}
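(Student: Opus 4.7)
The proof splits naturally into the two assertions.

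For the first part, my plan is to invoke Fermat's rule. Since $\F(\cdot,y)$ is finite-valued and differentiable everywhere on $\RR^p$ by \eqref{hyp-f-reg}, while $\J \in \lsc(\RR^p)$, the Moreau--Rockafellar sum rule applies with no qualification condition, giving
\[
\partial\pa{\F(\cdot,y)+\J}(\xx) = \Fx(\xx,y) + \partial \J(\xx) \qquad \forall \xx \in \RR^p.
\]
Because $\F(\cdot,y)+\J \in \lsc(\RR^p)$, $\xxs(y)$ is a (global) minimizer of \eqref{eq-group-lasso} if and only if $0 \in \partial\pa{\F(\cdot,y)+\J}(\xxs(y))$, which is exactly $-\Fx(\xxs(y),y) \in \partial \J(\xxs(y))$.

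For the second part, I would project the above inclusion onto $\T_{\xxs(y)}$. By Fact~\ref{fact:gradhess}, for every $\xx'$ in the partial smoothness manifold $\Mm$ near $\xxs(y)$ one has
\[
\proj_{\T_{\xx'}}\pa{\partial \J(\xx')} = \e{\xx'} = \JxM(\xx'),
\]
so in particular the projection of $\partial \J(\xxs(y))$ onto $\T_{\xxs(y)}$ is the singleton $\{\e{\xxs(y)}\} = \{\JxM(\xxs(y))\}$. On the other hand, the chain--rule identity \eqref{eq:covgrad} applied to the smooth extension $\F(\cdot,y)$ of its restriction to $\Mm$ gives
\[
\FxM(\xxs(y),y) = \proj_{\T_{\xxs(y)}} \Fx(\xxs(y),y).
\]
Projecting the inclusion $-\Fx(\xxs(y),y) \in \partial \J(\xxs(y))$ onto $\T_{\xxs(y)}$ therefore yields $-\FxM(\xxs(y),y) = \JxM(\xxs(y)) = \e{\xxs(y)}$, as claimed.

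There is no real obstacle: the sum rule holds trivially because $\F(\cdot,y)$ is smooth everywhere, so no relative interior condition on domains is needed, and the Riemannian identity is a direct consequence of Fact~\ref{fact:gradhess} combined with \eqref{eq:covgrad}. The only point that deserves attention is checking that the subdifferential projection is indeed single-valued at $\xxs(y)$; this is automatic from partial smoothness, since $\partial \J(\xxs(y))$ lies in a translate of $\S_{\xxs(y)} = \T_{\xxs(y)}^\perp$, so $\proj_{\T_{\xxs(y)}}$ collapses it to the single point $\e{\xxs(y)}$.
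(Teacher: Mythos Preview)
Your proposal is correct and follows exactly the same approach as the paper's own proof: the first assertion is just Fermat's rule together with the Moreau--Rockafellar sum rule, and the second follows by projecting onto $\T_{\xxs(y)}$ using \eqref{eq:covgrad} and Fact~\ref{fact:gradhess}. The paper states this in two sentences without spelling out the sum rule, but your expanded version is the intended argument.
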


\begin{proof}
The first monotone inclusion is just the first-order necessary and sufficient minimality condition for our convex program.
The second claim follows from \eqref{eq:covgrad} and Fact~\ref{fact:gradhess}. \qed
\end{proof}

\subsection{Proof of Lemma~\ref{lem:injectivity-cond}} 
The equivalence is a consequence of simple arguments from linear algebra. Indeed, when both $\Fxx(\xx,y)$ and $\Q(\xx)$ are positive semidefinite on $\T$, we have $\dotp{(\Fxx(\xx,y)\xi}{\xi} \geq 0$ and $\dotp{(\Q(\xx)\xi}{\xi} \geq 0$, $\forall ~ \xi \in \T$. Thus, for \eqref{eq-injectivity-cond} to hold, it is necessary and sufficient that $\nexists ~ 0 \neq \xi \in \T$ such that $\xi \in \Ker( \Fxx(\xx,y) )$ and $\xi \in \Ker( \Fxx(\xx,y) )$, which is exactly what we state. 

When $\Mm=\xx+\T$, the Riemannian hessians $\Fxx(\xx,y)$ and $\Q(\xx)$ are given by \eqref{eq:covhessFlin} and \eqref{eq:covhesslin}. Convexity and smoothness of $\F(\cdot,y)$ combined with \eqref{eq:covhessFlin} imply that $\Fxx(\xx,y)$ is positive semidefinite. Moreover, convexity and partial smoothness of $\J$ also yield that $\Q(\xx)$ is positive semidefinite, see~\citep[Lemma~4.6]{LiangDR15}. \qed

\subsection{Proof of Theorem~\ref{thm-local}} 
\label{sub:local}

Let $y \not\in \Hh$. To lighten the notation, we will drop the dependence of $\xxs$ on $y$, where $\xxs$ is a solution of \lasso such that $(\CondInj{\xxs}{y})$ holds. 

Let the constrained problem on $\Mm$
\eql{\label{eq:restricted}\tag{$\regulP{y}_\Mm$}
  \umin{\xx \in \Mm}
  \F( \xx, y)  +   \J(\xx) .
}
We define the notion of strong critical points that will play a pivotal role in our proof.
\begin{defn}
\label{def:strongcrit}
A point $\xxs$ is a strong local minimizer of a function $f: \Mm \to \RR \cup \ens{+\infty}$ if $f$ grows at least quadratically locally around $\xxs$ on $\Mm$, i.e. $\exists \delta > 0$ such that $f(\xx) \geq f(\xxs) + \delta\norm{\xx-\xxs}^2$, $\forall \xx \in \Mm$ near $\xxs$.
\end{defn}

The following lemma gives an equivalent characterization of strong critical points that will be more convenient in our context.
\begin{lem}
\label{lem:strongcrit}
Let $f \in \Cdeux(\Mm)$. A point $\xxs$ is a strong local minimizer of $f$ if, and only if, it is a critical  point of $f$, i.e. $\grad_\Mm f(\xxs)=0$, and satisfies the restricted positive definiteness condition
\[
\dotp{\hess_\Mm f(\xxs)\xi}{\xi} > 0 \quad \forall ~ 0 \neq \xi \in \Tgt_{\xxs}(\Mm) .
\]
\end{lem}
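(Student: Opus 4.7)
The plan is to reduce both implications to the classical Euclidean second-order characterization of strong minimizers, either via smooth curves on $\Mm$ or via a local $\Cdeux$ parametrization. I would not need any property of $\Mm$ beyond being a $\Cdeux$-manifold and the standard identification of Riemannian gradient/Hessian with directional derivatives.

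For the necessity ($\Rightarrow$), fix $\xi \in \Tgt_{\xxs}(\Mm) \setminus \{0\}$ and let $\gamma:(-\eps,\eps) \to \Mm$ be a geodesic on $\Mm$ with $\gamma(0)=\xxs$ and $\dot\gamma(0)=\xi$. Set $\varphi(t) = f(\gamma(t))$, which is $\Cdeux$. By the standard identification of Riemannian objects through geodesics,
\[
\varphi(t) = f(\xxs) + t\dotp{\grad_\Mm f(\xxs)}{\xi} + \tfrac{t^2}{2}\dotp{\hess_\Mm f(\xxs)\xi}{\xi} + o(t^2) ,
\]
and smoothness of $\gamma$ yields $\norm{\gamma(t)-\xxs} = |t|\,\norm{\xi} + O(t^2)$. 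Quadratic growth of $f$ on $\Mm$ gives $\varphi(t) - \varphi(0) \geq \delta\norm{\gamma(t)-\xxs}^2 = \delta t^2\norm{\xi}^2 + O(t^3)$. Dividing by $t$ and letting $t\to 0^{\pm}$ forces $\dotp{\grad_\Mm f(\xxs)}{\xi}=0$ for every $\xi$, hence $\grad_\Mm f(\xxs)=0$; dividing the remaining expansion by $t^2$ yields $\dotp{\hess_\Mm f(\xxs)\xi}{\xi} \geq 2\delta\norm{\xi}^2 > 0$.

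For the sufficiency ($\Leftarrow$), choose a $\Cdeux$ local parametrization $\phi: U \subset \RR^d \to \Mm$ with $\phi(0)=\xxs$ and $\dphi(0)$ a linear isomorphism onto $\Tgt_{\xxs}(\Mm)$, and set $g=f\circ\phi \in \Cdeux(U)$. At a critical point of $f$ on $\Mm$, the Euclidean Hessian $\hess g(0)$ coincides with $\dphi(0)^* \hess_\Mm f(\xxs)\, \dphi(0)$, hence $\grad g(0)=0$ and $\hess g(0)$ is positive definite. The classical Euclidean second-order sufficient condition then yields $\alpha>0$ and a neighborhood of $0$ on which $g(u) \geq g(0) + \alpha\norm{u}^2$. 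Finally, since $\phi$ is $\Cdeux$ with $\dphi(0)$ injective, there exist constants $c,C>0$ such that $c\norm{u} \leq \norm{\phi(u)-\xxs} \leq C\norm{u}$ for $u$ near $0$; this bi-Lipschitz equivalence transfers the quadratic growth back to $\Mm$, giving $f(\xx) \geq f(\xxs) + (\alpha/C^2)\norm{\xx-\xxs}^2$ for every $\xx\in\Mm$ near $\xxs$.

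There is no real obstacle here; the only technical point worth care is the derivation of the quadratic expansion of $\varphi$ in the necessity direction, for which I use a geodesic so that $\ddot\gamma(0)$ is normal to $\Mm$ and the first-order term dropped by $\grad_\Mm f(\xxs)=0$ absorbs the usual curve-dependent correction, leaving the Riemannian Hessian. All remaining steps are standard Taylor expansion and linear algebra.
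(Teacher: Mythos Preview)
Your argument is correct. The necessity direction is clean: for a geodesic $\gamma$ the identity $\varphi''(0)=\dotp{\hess_\Mm f(\xxs)\xi}{\xi}$ holds regardless of whether $\grad_\Mm f(\xxs)=0$, because the covariant acceleration $D_t\dot\gamma$ vanishes (your closing remark slightly conflates this with the vanishing of the gradient, but the mathematics is sound either way). The sufficiency direction is also fine: at a critical point the pullback Hessian $\hess g(0)=\dphi(0)^*\hess_\Mm f(\xxs)\,\dphi(0)$ is indeed the right object, since for any curve $\gamma$ in $\Mm$ through $\xxs$ one has $(f\circ\gamma)''(0)=\dotp{\hess_\Mm f(\xxs)\dot\gamma(0)}{\dot\gamma(0)}$ once $\grad_\Mm f(\xxs)=0$; the bi-Lipschitz transfer of quadratic growth through the chart is standard.

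The paper takes a different route: it does not argue directly but simply invokes the discussion following \citep[Definition~5.4]{Lewis-PartlySmooth} together with \citep[Theorem~3.4]{miller2005newton}, which package exactly the second-order Riemannian optimality conditions you unfold by hand. Your proof is therefore a self-contained, elementary version of what those references provide; it buys independence from external sources at the cost of a few lines of chart/geodesic bookkeeping, while the paper's citation keeps the exposition short by delegating the verification.
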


\begin{proof}[of Lemma~\ref{lem:strongcrit}]
The proof follows by combining the discussion after \citep[Definition~5.4]{Lewis-PartlySmooth} and \cite[Theorem~3.4]{miller2005newton}. \qed
\end{proof}

We now define the following mapping
\begin{equation*}
  \Gamma : (\xx,y) \in \Mm \times \RR^n \mapsto \FxM( \xx,y ) + \JxM(\xx).
\end{equation*}


We split the proof of the theorem in three steps.
We first show that there exists a continuously differentiable mapping $\bar y \mapsto \solmB \in \Mm$ and an open neighborhood $\neighb_y$ of $y$ such that every element $\bar y$ of $\neighb_y$ satisfies $\Gamma(\solm(\bar y), \bar y) = 0$. Then, we prove that $\solmB$ is a solution of ($\regulP{\bar y}$) for any $\bar y \in \neighb_y$. Finally, we obtain~\eqref{eq-differential} from the implicit function theorem.

\paragraph{Step~1: construction of $\solmB$.}

Using assumption \eqref{hyp-f-reg}, the sum and smooth perturbation calculus rules of partial smoothness \citep[Corollary~4.6 and Corollary~4.7]{Lewis-PartlySmooth} entail that the function $(\xx,y) \mapsto F(\xx,y) + \J(x)$ is partly smooth at $(\xxs,y)$ relative to $\Mm \times \RR^m$, which is a $\Cdeux$-manifold of $\RR^p \times \RR^m$. Moreover, it is easy to see that $\Mm \times \RR^m$ satisfies the transversality condition of \citep[Assumption~5.1]{Lewis-PartlySmooth}. By assumption $(\CondInj{\xxs}{y})$, $\xxs$ is also a strong global minimizer of \eqref{eq:restricted}, which implies in particular that $\Gamma(\xxs,y) = 0$; see Lemma~\ref{lem:strongcrit}. It then follows from \citep[Theorem~5.5]{Lewis-PartlySmooth} that there exist open neighborhoods $\widetilde \neighb_y$ of $y$ and $\widetilde \neighb_\xxs$ of $\xxs$ and a continuously differentiable mapping $\solm : \widetilde \neighb_y \to \Mm \cap \widetilde \neighb_\xxs$ such that $\solm(y) = \xxs$, and $\forall \bar y \in \widetilde \neighb_y$, $(\regulP{\bar y}_\Mm)$ has a {\emph{unique}} strong local minimizer, i.e.
\begin{align*}
    \Gamma(\solm(\bar y),\bar y)  =  0
    \qandq \text{$(\CondInj{\solm(\bar y)}{\bar y})$ holds} ,
\end{align*}
where we also used local normal sharpness property from partial smoothness of $\J$; see Fact~\ref{fact:sharp}.

\paragraph{Step~2: $\solmB$ is a solution of $(\regulP{\bar y})$.}
We now have to check the first-order optimality condition of ($\regulP{\bar y}$), i.e. that $- \Fx(\solm(\bar y),\bar y) \in \partial J(\solm(\bar y))$; see Lemma~\ref{lem:first-order}. We distinguish two cases.

\begin{enumerate}[label=$\bullet$]
\item Assume that $-\Fx(\xxs,y) \in \ri \partial J(\xxs)$. The result then follows from \citep[Theorem~5.7(ii)]{Lewis-PartlySmooth} which, moreover, allows to assert in this case that $-\Fx(\solm(\bar y),\bar y) \in \ri \partial \J(\solm(\bar y))$.

\item We now turn to the case where $-\Fx(\xxs,y) \in \rbd \partial J(\xxs)$.
Observe that $(y, \xxs) \in \Aa_{\Mm}$. In particular $y \in \Pi_{n+p,n}(\Aa_{\Mm})$.
Since by assumption $y \not\in \Hh$, one has $y \not\in \bd(\Pi_{n+p,n}(\Aa_{\Mm}))$.
Hence, there exists an open ball $\mathbb{B}(y, \epsilon)$ for some $\epsilon > 0$ such that $\mathbb{B}(y, \epsilon) \subset \Pi_{n+p,n}(\Aa_{\Mm})$.
Thus for every $\bar y \in \mathbb{B}(y, \epsilon)$, there exists $\bar \xx \in \Mc$ such that
\begin{equation*}
  - \Fx(\bar \xx, \bar y) \in \rbd \partial \J(\bar \xx) .
\end{equation*}
Since $\bar \xx \in \Mm$, $\bar \xx$ is also a critical point of $(\regulP{\bar y}_\Mm)$. But from Step~1, $\solm(\bar y)$ is unique, whence we deduce that $\solm(\bar y)=\bar \xx$. In turn, we conclude that
\begin{equation*}
  \forall \bar y \in \mathbb{B}(y, \epsilon), \quad
  - \Fx(\solm(\bar y),\bar y) \in \rbd \partial \J(\solm(\bar y)) \subset \partial \J(\solm(\bar y)) .
\end{equation*}
\end{enumerate}

\paragraph{Step~3: Computing the differential.}
In summary, we have built a mapping $\solm \in \Calt{1}(\neighb)$, with $\neighb = \widetilde \neighb_y \cap \mathbb{B}(y, \epsilon))$, such that $\solm(\bar y)$ is a solution of $(\regulP{\bar y})$ and fulfills $(\CondInj{\solm(\bar y)}{\bar y})$. We are then in position to apply the implicit function theorem to $\Gamma$, and we get the Jacobian of the mapping $\solm$ as
\begin{equation*}
    \jac \solm(\bar y) = -
    \pa{\Fxx(\solm(\bar y), \bar y) + \Q(\solm(\bar y))}^{+} \jac (\FxM)(\solm(\bar y),\bar y)
\end{equation*}
where 
\eq{
	\jac (\FxM)(\xx,y) = \proj_{\T_{\xx}} \Fxy(\xx,y),
}
where the equality is a consequence of \eqref{eq:covgrad} and linearity. \qed

\subsection{Proof of Lemma~\ref{lem:unique}} 
\begin{enumerate}[label=(\roman*)]
\item See \cite[Lemma~8]{vaiter2013model}.
\item This is a specialization of Lemma~\ref{lem:injectivity-cond} using \eqref{eq-stric-cvx} and \eqref{eq:covhesslin}.  \qed
\end{enumerate}

\subsection{Proof of Theorem~\ref{thm-div}} 
\label{sub:dof}
We can now prove Theorem~\ref{thm-div}. At any $y \notin \Hh \cup \Gg$, we consider $\xxs(y)$ a solution of \eqref{eq-group-lasso}. By assumption, $(\CondInj{\xxs}{y})$ holds. According to Theorem~\ref{thm-local}, one can construct a mapping $y \mapsto \solmB$ which is a solution to $(\lassoB)$, coincides with $\xxs(y)$ at $y$, and is $\Calt{1}$ for $\bar{y}$ in a neighborhood of $y$. Thus, by Lemma~\ref{lem:unique}, $\msol(\bar y)=\XX \solmB$ is a single-valued mapping, which is also $\Calt{1}$ in a neighbourhood of $y$. Moreover, its differential is equal to $\Delta(y)$ as given, where we applied the chain rule in \eqref{eq:covhessF}. \qed

\subsection{Proof of Proposition~\ref{prop-exist}} 
\label{sub:exist}
The proofs of both statements are constructive.

\begin{enumerate}[label=(\roman*)]
\item Polyhedral penalty:
any polyhedral convex $\J$ can be written as~\citep{Rockafellar96}
\begin{align*}
\J(\xx) &= \umax{i \in \ens{1,\dots,q}} \ens{\dotp{d_i}{\xx} - b_i} + \iota_{\Cc}(\xx), \\
\Cc 	&= \enscond{\xx \in \RR^p}{\dotp{a_k}{\xx} \leq c_k}, k \in \ens{1,\dots,r} .
\end{align*}
It is straightforward to show that
\begin{gather*}
\partial \J(\xx) = \co \ens{d_i}_{i \in I_{\xx}}  + \cone \ens{a_k}_{k \in K_{\xx}}, \qwhereq\\
I_{\xx} = \enscond{i}{\dotp{d_i}{\xx} - b_i = \J(\xx)} \qandq K_{\xx} = \enscond{j}{\dotp{a_j}{\xx} = c_i} ,
\end{gather*}
and
\[
\T_{\xx} = \enscond{h}{\dotp{h}{d_i} = \dotp{h}{d_j} = \tau_{\xx}, ~~ \forall i,j \in I_{\xx}} \cap \enscond{h}{\dotp{h}{a_k} = 0, ~~ \forall k \in K_{\xx}} .
\]
Let $\xxs$ be a solution of \eqref{eq-group-lasso} for $\J$ as above. Recall from Example~\ref{ex:injpolyh} that $(\CondInj{\xxs}{y})$ is equivalent to $\Ker(\XX) \cap \T_{\xxs} = \ens{0}$. Suppose that this condition does not. Thus, there exists a nonzero vector $h \in \T_{\xxs}$ such that the vector $v_t = \xxs + th$, $t \in \RR$, satisfies $\XX v_t = \XX \xxs$. Moreover,
\begin{gather*}
\dotp{v_t}{d_i} - b_i = 
\begin{cases}
\J(\xxs) + t\tau_{\xxs}, 						& \text{if } i \in I_{\xxs} \\
\dotp{\xxs}{d_i} - b_i + t\dotp{h}{d_i} < \J(\xxs) + t\dotp{h}{d_i}	& \text{otherwise} .
\end{cases}
\\\qandq \\
\dotp{v_t}{a_k} = 
\begin{cases}
c_k, 									& \text{if } k \in K_{\xxs} \\
\dotp{\xxs}{a_k} + t\dotp{h}{a_k} < c_k + t\dotp{h}{a_k}		& \text{otherwise} .
\end{cases}
\end{gather*}
Thus, for $t \in ]-t_0,t_0[$, where
\[
t_0 = \min\pa{	\min_{i \notin I_{\xxs}}\bens{\frac{\J(\xxs) - \dotp{\xxs}{d_i} + b_i}{\abs{\dotp{h}{d_i} - \tau_{\xxs}}}},
		\min_{k \notin K_{\xxs}}\bens{\frac{c_k - \dotp{\xxs}{a_k}}{\abs{\dotp{h}{a_k}}}}} ,
\]
we have $I_{v_t} = I_{\xxs}$ and $K_{v_t} = K_{\xxs}$. Moreover, $v_t \in \Cc$. Therefore, for all such $t$, we indeed have $\partial \J(v_t) = \partial \J(\xxs)$ and $\T_{v_t}=\T_{\xxs}$. Altogether, we get that
\[
-\transp{\XX}\Fxo(\XX v_t,y)=-\transp{\XX}\Fxo(\XX \xxs,y) \in \partial \J(\xxs) = \partial \J(v_t) ,
\]
i.e.~$v_t$ is a solution to \eqref{eq-group-lasso}. Thus, by Lemma~\ref{lem:unique}, we deduce that $\F_0(\XX v_t,y)=\F_0(\XX\xxs,y)$ and $\J(v_t)=\J(\xxs)$. The continuity assumption~\eqref{hyp-f-reg} yields 
\[
\F_0(\XX v_{t_0},y) = \F_0(\XX \xxs,y) .
\] 
Furthermore, since $\J$ is lsc and $v_t$ is a minimizer of \eqref{eq-group-lasso}, we have
\[
\liminf_{t \to t_0} \J(v_t) \geq \J(v_{t_0}) \geq \limsup_{t \to t_0} \J(v_t) \iff \J(v_{t_0}) = \lim_{t \to t_0} \J(v_t) = \J(\xxs) .
\]
Consequently, $v_{t_0}$ is a solution of \eqref{eq-group-lasso} such that $I_{\xxs} \subsetneq I_{v_{t_0}}$ or/and $K_{\xxs} \subsetneq K_{v_{t_0}}$, which in turn implies $\T_{v_{t_0}} \subsetneq \T_{\xxs}$. Iterating this argument, we conclude.

\item General group Lasso: 
Let $\xxs$ be a solution of \eqref{eq-group-lasso} for $\J=\norm{D^* \cdot}_{1,2}$, and $I_{\xxs}=\enscond{i}{b_i \in \Bb \tandt D^*_{b_i}\xxs \neq 0}$, i.e.~the set indexing the active blocks of $D^*\xxs$. We recall from Example~\ref{ex:gglassoM} that the partial smoothness subspace $\Mm=\T_{\xxs} = \Ker(D_{\Lambda^c}^*)$, where $\Lambda=\bs(D^*\xxs)$.

From Lemma~\ref{lem:first-order} and the subdifferential of the group Lasso, $\xxs$ is indeed a minimizer if and only if there exists $\eta \in \RR^p$ such that
\eql{
\label{eq:mincondglasso}
-\transp{\XX}\Fxo(\XX \xxs,y) + \sum_{i \in I} D_{b_i} \eta_{b_i} = 0 \qandq
\begin{cases}
\eta_{b_i} = \frac{D^*_{b_i}\xxs}{\norm{D^*_{b_i}\xxs}} & \text{if}~ i \in I_{\xxs} \\
\norm{\eta_{b_i}} \leq 1 & \text{otherwise} .
\end{cases}
}
Suppose that $(\CondInj{\xxs}{y})$ (or equivalently Lemma~\ref{lem:unique}(ii)) does not hold at $\xxs$. This is equivalent to the existence of a nonzero vector $h \in \RR^p$ in the set at the end of Example~\ref{ex:injgglasso}. Let $v_t = \xxs + t h$, for $t \in \RR$. By construction, $v_t$ obeys
\begin{gather*}
v_t \in \T_{\xxs} \iff \forall i \notin I_{\xxs}, ~ D_{b_i}^* v_t = 0 \\
\qandq \XX v_t = \XX \xxs \\
\qandq \forall i \in I_{\xxs}, \exists \mu_i \in \RR, ~ D^*_{b_i} v_t = (1+t\mu_i)D^*_{b_i} \xxs .
\end{gather*}
Let 
\[
t_0 = \min\enscond{|t|}{1+t\mu_i=0, i \in I} = \min_{i \in I_{\xxs}, \mu_i \neq 0} \abs{\mu_i}^{-1} .
\] 
For all $t \in ]-t_0,t_0[$, we have $1+t\mu_i > 0$ for $i \in I_{\xxs}$ and $I_{v_t}=I_{\xxs}$ (in fact $\T_{v_t}=\T_{\xxs}$ by~Fact~\ref{fact:sharp}), and thus
\[
\frac{D^*_{b_i} v_t}{\norm{D^*_{b_i} v_t}} = \frac{D^*_{b_i} \xxs}{\norm{D^*_{b_i}\xxs}}, \quad \forall i \in I_{v_t} .
\] 
Moreover, $-\transp{\XX}\Fxo(\XX v_t,y)=-\transp{\XX}\Fxo(\XX \xxs,y)$. Inserting the last statements in \eqref{eq:mincondglasso}, we deduce that $v_t$ is a solution of \eqref{eq-group-lasso}. 

From Lemma~\ref{lem:unique}(i), we get that $\F_0(\XX v_t,y)=\F_0(\XX\xxs,y)$ and $\norm{D^*v_t}_{1,2}=\norm{D^*\xxs}_{1,2}$. By continuity of $\F_0(\cdot,y)$ (assumption~\eqref{hyp-f-reg}), and of $\norm{\cdot}_{1,2}$ one has
\[
\F_0(\XX v_{t_0}) = \F_0(\XX \xxs) \qandq \norm{D^* v_{t_0}}_{1,2} = \norm{D^* \xxs}_{1,2} .
\]
Clearly, we have constructed a solution $v_{t_0}$ of \eqref{eq-group-lasso} such that $I_{v_{t_0}} \subsetneq I_{\xxs}$, hence $\Ker(\Q(v_{t_0})) \cap \T_{v_{t_0}} \subsetneq \Ker(\Q(\xxs)) \cap \T_{\xxs}$. Iterating this argument shows the result. \qed
\end{enumerate}

\begin{remark}
For the general group Lasso, the iterative construction is guaranteed to terminate at a non-trivial point. Indeed, if it were not the case, then eventually one would construct a solution such that $0 \neq h \in \Ker(\XX) \cap \Ker(D^*)$ leading to a contradiction with a classical condition in regularization theory. Moreover, $\Ker(\XX) \cap \Ker(D^*) = \ens{0}$ is a sufficient (and necessary in our case) condition to ensure boundedness of the set of solutions to \eqref{eq-group-lasso}.
\end{remark}

\subsection{Proof of Theorem~\ref{thm-dof}} 
\label{sub:sure}

\begin{enumerate}[label=(\roman*)]
\item We obtain this assertion by proving that all $\Hh_{\Mm}$ are of zero measure for all $\Mm$, and that the union is over a finite set, because of~\eqref{hyp-tt}.
\begin{enumerate}[label=$\bullet$]
\item Since $\J$ is definable by~\eqref{eq-condition-omin}, $\Fx(\xx,y)$ is also definable by virtue of Proposition~\ref{prop-ominimal-diffjac}. 

\item Given $\Mm \in \Mscr$ which is definable, $\Mc$ is also definable. Indeed, $\Mc$ can be equivalently written
\begin{align*}
\Mc 	&= \Mm \cap \enscond{\xx}{\exists \epsilon > 0, \forall \xx' \in \Mm \cap \mathbb{B}(\xx,\epsilon), \J \in \Cdeux(\xx')} \\
	&\cap \enscond{\xx}{\forall (u,v) \in (\partial J(\xx))^2, \dotp{u-v}{\xx'}=0, \forall \xx' \in \Tgt_{\xx}(\Mm)} \\
	&\cap \enscond{\xx}{\forall \xx_r \in \Mm \to \xx \tandt u \in \partial \J(\xx), \exists u_r \to u \text{ s.t. } u_r \in \partial\J(\xx_r)} ~.
\end{align*}
Each of the four sets above capture a property of partial smoothness as introduced in Definition~\ref{defn:psg}. $\Mc$ involves $\Mm$ which is definable, its tangent space (which can be shown to be definable as a mapping of $\xx$ using Proposition~\ref{prop-ominimal-diffjac}), $\partial \J$ whose graph is definable thanks to Proposition~\ref{cor-ominimal-subdiff}, continuity relations and algebraic equations, whence definability follows after interpreting the logical notations (conjunction, existence and universal quantifiers) in the first-order formula in terms of set operations, and using axioms 1-4 of definability in an o-minimal structure. 
 
\item Let $\boldsymbol{D}: \RR^p \rightrightarrows \RR^p$ the set-valued mapping whose graph is 
\[
\graph(\boldsymbol{D}) = \enscond{(\xx,\eta)}{\eta \in \ri \partial \J(\xx)} ~.
\]
From Lemma~\ref{lem-ominimal-ris}, $\graph(\boldsymbol{D})$ is definable. Since the graph $\partial \J$ is closed \citep{hiriart1996convex}, and definable (Proposition~\ref{cor-ominimal-subdiff}), the set
\[
\enscond{(\xx,\eta)}{\eta \in \rbd \partial \J(\xx)} = \graph(\partial \J) \setminus \graph(\boldsymbol{D}) ~,
\]
is also definable by axiom 1. This entails that $\Aa_{\Mm}$ is also a definable subset of $\RR^n \times \Mc$ since
\begin{align*}
\Aa_{\Mm} = (\RR^n \times \Mc \times \RR^n) &\cap \enscond{(y,\xx,\eta)}{\eta=-\Fx(\xx_T,y)} \\
					   &\cap (\RR^n \times \enscond{(\xx,\eta)}{\eta \in \rbd \partial \J(\xx)}) ~.
\end{align*}

\item By axiom 4, the canonical projection $\Pi_{n+p,n}(\Aa_{\Mm})$ is definable, and its boundary $\Hh_\T=\bd(\Pi_{n+p,n}(\Aa_{\Mm}))$ is also definable by \cite[Proposition~1.12]{coste1999omin} with a strictly smaller dimension than $\Pi_{n+p,n}(\Aa_{\Mm})$ \cite[Theorem~3.22]{coste1999omin}. 

\item We recall now from \citep[Theorem~2.10]{coste1999omin} that any definable subset $A \subset \RR^n$ in $\omin$ can be decomposed (stratified) in a disjoint finite union of $q$ subsets $C_i$, definable in $\omin$, called cells.
The dimension of $A$ is~\cite[Proposition 3.17(4)]{coste1999omin}
\begin{equation*}
  d = \umax{i \in \{1,\dots,q\}} d_i \leq n ~,
\end{equation*}
where $d_i=\dim(C_i)$. Altogether we get that
\begin{equation*}
  \dim \Hh_{\Mm}
  =
  \dim \bd(\Pi_{n+p,n}(\Aa_{\Mm}))
  <
  \dim \Pi_{n+p,n}(\Aa_{\Mm})
  =
  d
  \leq
  n
\end{equation*}
whence we deduce that $\Hh$ is of zero measure with respect to the Lebesgue measure on $\RR^n$ since the union is taken over the finite set $\Mscr$ by~\eqref{hyp-tt}.
\end{enumerate} 

\item $\F_0(\cdot,y)$ is strongly convex with modulus $\tau$ if, and only if,
\[
\F_0(\mu,y) = G(\mu,y) + \frac{\tau}{2} \norm{\mu}^2
\]
where $G(\cdot,y)$ is convex and satisfies~\eqref{hyp-f-reg}, and in particular its domain in $\mu$ is full-dimensional. Thus, \eqref{eq-group-lasso} amounts to solving
\begin{equation*}
  \umin{ \xx \in \RR^p }  \frac{\tau}{2} \norm{\XX\xx}^2 + G(\XX\xx,y) + \J(\xx) .
\end{equation*}
It can be recasted as a constrained optimization problem
\begin{equation*}
\umin{ \mu \in \RR^n, \xx \in \RR^p}  \frac{\tau}{2} \norm{\mu}^2 + G(\mu,y) + \J(\xx) ~\mathrm{s.t.}~ \mu = \XX\xx .
\end{equation*}
Introducing the image $(\XX\J)$ of $\J$ under the linear mapping $\XX$, it is equivalent to
\begin{equation}
\umin{ \mu \in \RR^n}  \frac{\tau}{2} \norm{\mu}^2 + G(\mu,y) + (\XX\J)(\mu) ~,
\end{equation}
where $(\XX\J)(\mu) = \umin{\enscond{\xx \in \RR^p}{\mu = \XX\xx}} \J(\xx)$ is the co-called pre-image of $J$ under $\XX$.
This is a proper closed convex function, which is finite on $\Span(\XX)$.
The minimization problem amounts to computing the proximal point at $0$ of $G(\cdot,y) + (\XX\J)$, which is a proper closed and convex function. Thus this point exists and is unique.

Furthermore, by assumption \eqref{cnd-hess-bord}, the difference function
\[
\F_0(\cdot,y_1)-\F_0(\cdot,y_2)=G(\cdot,y_1)-G(\cdot,y_2)
\]
is Lipschitz continuous on $\RR^p$ with Lipschitz constant $L\norm{y_1-y_2}$. It then follows from \cite[Proposition~4.32]{BonnansShapiro2000} that $\msol(\cdot)$ is Lipschitz continuous with constant $2L/\tau$. Moreover, $h$ is Lipschitz continuous, and thus so is the composed mapping $h \circ \msol(\cdot)$. From~\citep[Theorem~5, Section~4.2.3]{EvansGariepy92}, weak differentiability follows.

Rademacher theorem asserts that a Lipschitz continuous function is differentiable Lebesgue a.e. and its derivative and weak derivative coincide Lebesgue a.e., \citep[Theorem~2, Section~6.2]{EvansGariepy92}. Its weak derivative, whenever it exsist, is upper-bounded by the Lipschitz constant. Thus
\[
\EE\pa{\Big|\pd{ (h \circ \msol)_i}{y_i}(Y)\Big|} < +\infty ~.
\]

\item Now, by the chain rule~\citep[Remark, Section~4.2.2]{EvansGariepy92}, the weak derivative of $h \circ \msol(\cdot)$ at $y$ is precisely 
\[
\jac( h\circ\msol)(y))=\jac h\pa{\msol(y)}\De(y) ~.
\]
This formula is valid everywhere except on the set $\Hh \cup \Gg$ which is of Lebesgue measure zero as shown in (i). We conclude by invoking (ii) and Stein's lemma \citep{stein1981estimation} to establish unbiasedness of the estimator $\widehat \DOF$ of the DOF.

\item Plugging the DOF expression (iii) into that of the $\SURE$~\citep[Theorem~1]{stein1981estimation}, the statement follows.

\end{enumerate}
\qed

\subsection{Proof of Theorem~\ref{thm-dof-exp}} 
\label{sub:sureexp}

For (i)-(iii), the proof is exactly the same as in Theorem~\ref{thm-dof}.
For (iv): combining the DOF expression (iii) and \citep[Theorem~1]{eldar-gsure}, and rearranging the expression yields the stated result. \qed



\section{Conclusion}

In this paper, we proposed a detailed sensitivity analysis of a class of estimators obtained by minimizing a general convex optimization problem with a regularizing penalty encoding a low complexity prior. This was achieved through the concept of partial smoothness. This allowed us to derive an analytical expression of the local variations of these estimators to perturbations of the observations, and also to prove that the set where the estimator behaves non-smoothly as a function of the observations is of zero Lebesgue measure. Both results paved the way to derive unbiased estimators of the prediction risk in two random scenarios, one of which covers the continuous exponential family. This analysis covers a large set of convex variational estimators routinely used in statistics, machine learning and imaging (most notably group sparsity and multidimensional total variation penalty). The simulation results confirm our theoretical findings and show that our risk estimator provides a viable way for automatic choice of the problem hyperparameters.

Despite its generality, there are still problems which do not fall within our settings. One can think for instance to the case of discrete (even exponential) distributions, risk estimation for non-canonical parameter of non-Gaussian distributions, non-convex regularizers, or the graphical Lasso. 

Extension to the discrete case is far from obvious, even in the independent case. One can think for instance of using identities derived by \citep{hudson1978nie,Hwang82}, but so far, provably unbiased estimates of SURE (not generalized one) are only available for linear estimators. 

If the distribution under consideration is from a continuous exponential family, so that our results apply, but one is interested in estimating the risk at a function of the canonical parameter. First, this function has to be Lipschitz continuous, and one has first to prove a formula of the corresponding SURE. So far, we are only aware of such results in the Gaussian case (hence our Theorem~\ref{thm-dof} which addresses this question precisely).

Strictly speaking, the $\lun$-penalized likelihood formulation of the graphical Lasso in \citep{YuanLin07} ((3) or (6) in that reference) does not fall within our framework. This is due to the fidelity/likelihood term which does not obey our assumptions. Note that the limitation due to fidelity/likelihood can be circumvented at the price of a quadratic approximation \citep[Section~4]{YuanLin07} also used in \citep{MeinshausenBuhlmann06}. 

Extending our results to the non-convex case would be very interesting to handle penalties such as SCAD or MCP. This would however require more sophisticated material from variational analysis. Not to mention the other difficulties inherent to non-convexity, including handling critical points (that are not necessarily minimizers even local in general), and the fact that the mapping $y \mapsto \msol(y)$ is no longer single-valued. All the above settings will be left to future work.


\begin{acknowledgements}
This work has been supported by the European Research Council (ERC project SIGMA-Vision) and Institut Universitaire de France.
\end{acknowledgements}

\appendix 

\section{Basic Properties of o-minimal Structures}
\label{sec-omin}

In the following results, we collect some important stability properties of o-minimal structures. To be self-contained, we also provide proofs. To the best of our knowledge, these proofs, although simple, are not reported in the literature or some of them are left as exercices in the authoritative references~\cite{van-den-Dries-omin-book,coste1999omin}. Moreover, in most proofs, to show that a subset is definable, we could just write the appropriate first-order formula (see \cite[Page~12]{coste1999omin}\cite[Section Ch1.1.2]{van-den-Dries-omin-book}), and conclude using \cite[Theorem~1.13]{coste1999omin}. Here, for the sake of clarity and avoid cryptic statements for the non-specialist, we will translate the first order formula into operations on the involved subsets, in particular projections, and invoke the above stability axioms of o-minimal structures. In the following, $n$ denotes an arbitrary (finite) dimension which is not necessarily the number of observations used previously the paper. 

\begin{lem}[Addition and multiplication]\label{lem-ominimal-summult}
	Let $f : \Om \subset \RR^n \rightarrow \RR^p$ and $g : \Omega \subset \RR^n \subset \RR^p$ be definable functions. Then their pointwise addition and multplication is also definable.
\end{lem}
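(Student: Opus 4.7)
The plan is to express the graphs of $f+g$ and $f \cdot g$ (interpreting multiplication componentwise if $p>1$) as projections of definable sets, and then invoke the axioms of an o-minimal structure to conclude. Throughout, everything reduces to the observation that addition and multiplication are themselves semi-algebraic operations on $\RR^p$, so their graphs are definable by axiom~2.

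The concrete construction I would carry out is as follows. First, build the ``combined graph''
\[
\Gamma = \enscond{(x,u,v) \in \Om \times \RR^p \times \RR^p}{u=f(x) \tandt v=g(x)}.
\]
This set is definable: $\graph(f) \times \RR^p$ is definable by axioms~1 and~3; the set $\enscond{(x,u,v)}{(x,v) \in \graph(g)}$ is obtained from $\graph(g) \times \RR^p$ by a coordinate permutation (a semi-algebraic bijection, hence preserving definability, which can also be verified directly via the first-order formula formalism of \citep[Theorem~1.13]{coste1999omin}); and the intersection of two definable sets is definable by axiom~1. Next, the linear subspace $S_+ = \enscond{(u,v,w) \in \RR^{3p}}{w=u+v}$ and the algebraic set $S_\times = \enscond{(u,v,w) \in \RR^{3p}}{w_i = u_i v_i,\ 1 \leq i \leq p}$ are semi-algebraic, hence definable by axiom~2. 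Forming the ``augmented graph'' $\enscond{(x,u,v,w)}{(x,u,v) \in \Gamma \tandt (u,v,w) \in S_\star}$ in $\RR^{n+3p}$, which is definable by axioms~1 and~3, one then reads off
\[
\graph(f+g) = \pi\left(\enscond{(x,u,v,w)}{(x,u,v) \in \Gamma \tandt (u,v,w) \in S_+}\right),
\]
where $\pi$ is the projection eliminating the $u$ and $v$ blocks. This projection is a composition of $2p$ applications of axiom~4 (possibly interleaved with coordinate permutations to bring the variables to be eliminated into the last position). Hence $\graph(f+g)$ is definable, and the argument for $f \cdot g$ is identical with $S_+$ replaced by $S_\times$.

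There is no real mathematical obstacle here; the proof is mechanical once the right definable building blocks are identified. The only mildly delicate point is bookkeeping of coordinate orderings, since axiom~4 only grants projection onto the \emph{first} $k$ coordinates. This is handled either by observing that coordinate permutations are semi-algebraic, or, more pedantically, by writing the whole construction as a first-order formula in the language of ordered fields augmented by the predicates for $\graph(f)$ and $\graph(g)$ and then appealing to the fact that definable sets in an o-minimal structure are closed under first-order definitions.
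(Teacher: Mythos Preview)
Your proof is correct and follows essentially the same approach as the paper: build an auxiliary definable set from the graphs of $f$ and $g$ together with the semi-algebraic relation $w=u+v$ (respectively $w_i=u_iv_i$), then project. The paper works in $\RR^{3n+3p}$ with triples $(x,u,y,v,z,w)$ and forces $x=y=z$ through the semi-algebraic set $S$, whereas you work directly in $\RR^{n+3p}$ with a single copy of $x$; your layout is slightly leaner and you are more explicit about the coordinate-permutation bookkeeping needed for axiom~4, but the underlying idea is identical.
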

\begin{proof} 
	Let $h=f+g$, and 
	\[
	B=(\Om \times \RR \times \Om \times \RR \times \Om \times \RR) \cap (\Om \times \RR \times \graph(f) \times \graph(h)) \cap S
	\]
	where $S=\enscond{(x,u,y,v,z,w)}{x=y=z, u=v+w}$ is obviously an algebraic (in fact linear) subset, hence definable by axiom 2. Axiom 1 and 2 then imply that $B$ is also definable. Let $\Pi_{3n+3p,n+p}: \RR^{3n+3p} \to \RR^{n+p}$ be the projection on the first $n+p$ coordinates. We then have
	\[
	\graph(h) = \Pi_{3n+3p,n+p}(B)
	\]
	whence we deduce that $h$ is definable by applying $3n+3p$ times axiom 4. Definability of the pointwise multiplication follows the same proof taking $u=v \cdot w$ in $S$. \qed
\end{proof}

\begin{lem}[Inequalities in definable sets]\label{lem-ominimal-ineq}
	Let $f : \Om \subset \RR^n \rightarrow \RR$ be a definable function. Then $\enscond{x \in \Om}{f(x) > 0}$, is definable. The same holds when replacing $>$ with $<$.
\end{lem}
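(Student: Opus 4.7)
The plan is a direct application of the o-minimal axioms, following the same translation strategy used elsewhere in the appendix: express the target set as a projection of a Boolean combination of previously-known definable sets.

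First, since $f$ is definable by hypothesis, its graph $\graph(f) \subseteq \Om \times \RR \subseteq \RR^{n+1}$ is a definable subset of $\RR^{n+1}$. Next, the open half-line $\RR_{>0} = (0,+\infty)$ is a semi-algebraic subset of $\RR$, hence definable in $\omin_1$ by axiom 2. By axiom 3, the product $\Om \times \RR_{>0} \subseteq \RR^{n+1}$ is definable (using that $\Om$ is definable, which follows from $\graph(f)$ being definable by one application of axiom 4).

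Then I would form the intersection
\[
B = \graph(f) \cap (\Om \times \RR_{>0}) = \enscond{(x,u) \in \Om \times \RR}{u = f(x) \tandt u > 0},
\]
which is definable by axiom 1 (Boolean algebra). Finally, applying the projection axiom 4 with $\Pi_{n+1,n} : \RR^{n+1} \to \RR^n$ onto the first $n$ coordinates gives
\[
\Pi_{n+1,n}(B) = \enscond{x \in \Om}{f(x) > 0},
\]
which is therefore definable in $\omin_n$. The case of the strict inequality $f(x) < 0$ is handled identically by replacing $\RR_{>0}$ with $\RR_{<0} = (-\infty,0)$, which is likewise semi-algebraic and hence definable.

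There is essentially no obstacle here: the argument is a textbook one-liner once one translates the first-order formula ``$x \in \Om$ and $f(x)>0$'' into set-theoretic operations (product, intersection, projection) on definable sets. The only care required is to keep track of ambient dimensions so that each axiom is applied in the correct arity, and to invoke the semi-algebraicity of the open half-lines to bring $\RR_{>0}$ and $\RR_{<0}$ into the structure via axiom 2 rather than appealing to any stronger property of $\omin$.
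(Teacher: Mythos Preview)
Your argument is correct and coincides with the paper's first proof: intersect $\graph(f)$ with $\Om \times (0,+\infty)$ and project onto the first $n$ coordinates. The paper also records a second, slightly slicker variant you might appreciate: observe that $\enscond{x \in \Om}{f(x)>0}$ is the projection of the definable set $\enscond{(x,t) \in \Om \times \RR}{t^2 f(x)-1=0}$, which avoids even mentioning the half-line explicitly.
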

Clearly, inequalities involving definable functions are accepted when defining definable sets.
 
There are many possible proofs of this statement.
\begin{proof}[1]
Let $B=\enscond{(x,y) \in \RR \times \RR}{f(x)=y} \cap (\Omega \times (0,+\infty)$, which is definable thanks to axioms 1 and 3, and that the level sets of a definable function are also definable. Thus
\[
\enscond{x \in \Om}{f(x) > 0} = \enscond{x \in \Om}{\exists y, f(x) = y, y > 0} = \Pi_{n+1,n}(B) ~,
\]
and we conclude using again axiom 4. \qed
\end{proof}
Yet another (simpler) proof.
\begin{proof}[2]
It is sufficient to remark that $\enscond{x \in \Om}{f(x) > 0}$ is the projection of the set $\enscond{(x,t) \in \Om \times \RR}{t^2f(x)-1 = 0}$, where the latter is definable owing to Lemma~\ref{lem-ominimal-summult}. \qed
\end{proof}

\begin{lem}[Derivative]\label{prop-ominimal-derivative}
Let $f: I \to \RR$ be a definable differentiable function on an open interval $I$ of $\RR$. Then its derivative $f': I \to \RR$ is also definable.
\end{lem}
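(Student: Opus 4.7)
The plan is to show that $\graph(f') \subset I \times \RR$ is a definable subset, which by definition gives the claim. I would write the negation of the statement "$y = f'(x)$" in prenex form as $\exists \epsilon > 0,\, \forall \delta > 0,\, \exists h$ applied to a definable predicate, and then translate each quantifier into either a projection (axiom~4) or a complement-projection-complement sandwich (combining axioms~1 and~4).

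As a preliminary, the difference quotient $g(x,h) = (f(x+h) - f(x))/h$ is definable on the open set $U = \enscond{(x,h) \in I \times \RR}{h \neq 0 \tandt x+h \in I}$, itself definable by axiom~2 and Lemma~\ref{lem-ominimal-ineq}. The composition of two definable maps is definable, a standard consequence of axioms~1, 3 and~4 obtained by projecting out the intermediate coordinates of the definable intersection $(\graph(\phi) \times \RR^c) \cap (\RR^a \times \graph(\psi))$, so $(x,h) \mapsto f(x+h)$ is definable. Lemma~\ref{lem-ominimal-summult} together with the algebraic function $h \mapsto 1/h$ on $\{h \neq 0\}$ (whose graph $\{(h,y) : hy = 1\}$ is semi-algebraic, hence definable by axiom~2) then yields $g$.

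The set
\begin{equation*}
D = \enscond{(x,y,\epsilon,\delta,h) \in I \times \RR^4}{\epsilon > 0,\, \delta > 0,\, 0 < |h| < \delta,\, x+h \in I,\, |g(x,h)-y| \geq \epsilon}
\end{equation*}
is definable by Lemma~\ref{lem-ominimal-ineq} applied to the definable functions above. Projecting out $h$ (axiom~4) yields a definable set $D_1 \subset I \times \RR^3$ encoding the innermost "$\exists h$". Complementing $D_1$ inside $I \times \RR \times \enscond{(\epsilon,\delta)}{\epsilon > 0,\, \delta > 0}$ (axiom~1), projecting out $\delta$ (axiom~4), and complementing once more inside $I \times \RR \times \enscond{\epsilon}{\epsilon > 0}$ captures the middle "$\forall \delta > 0$"; a last projection over $\epsilon$ followed by a complement inside $I \times \RR$ yields $\graph(f')$. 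The main obstacle is purely the bookkeeping of alternating quantifiers — each $\forall$ must be turned into a complement-projection-complement sandwich, and one has to check that every ambient set used for complementation is itself definable, which here reduces to trivial applications of Lemma~\ref{lem-ominimal-ineq}.
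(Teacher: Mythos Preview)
Your proof is correct and follows essentially the same strategy as the paper's: express $\graph(f')$ via the $\varepsilon$--$\delta$ definition of the limit and then translate the alternating quantifiers into projections and complements using axioms~1 and~4. The only cosmetic differences are that the paper avoids division by working with $|f(x+t)-f(x)-yt|<\varepsilon|t|$ instead of the difference quotient, and encodes the positive statement $\forall\varepsilon\,\exists\delta\,\forall t$ directly rather than its negation.
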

\begin{proof} 
Let $g: (x,t) \in I \times \RR \mapsto g(x,t) = f(x+t)-f(x)$. Note that $g$ is definable function on $I \times \RR$ by Lemma~\ref{lem-ominimal-summult}. We now write the graph of $f'$ as
\[
\graph(f') = \enscond{(x,y) \in I \times \RR}{\forall \varepsilon > 0, \exists \delta > 0, \forall t \in \RR, \abs{t} < \delta, \abs{g(x,t) - yt} < \varepsilon|t|} ~.
\]
Let $C=\enscond{(x,y,v,t,\varepsilon,\delta) \in I \times \RR^5}{((x,t),v) \in \graph(g)}$, which is definable since $g$ is definable and using axiom 3. Let
\[
B = \enscond{(x,y,v,t,\varepsilon,\delta)}{t^2 < \delta^2, (v-ty)^2 < \varepsilon^2t^2} \cap C ~.
\]
The first part in $B$ is semi-algebraic, hence definable thanks to axiom 2. Thus $B$ is also definable using axiom 1. We can now write
\[
\graph(f') = \RR^3 \setminus \pa{\Pi_{5,3}\pa{\RR^5 \setminus \Pi_{6,5}(B)}} \cap (I \times \RR) ~,
\]
where the projectors and completions translate the actions of the existential and universal quantifiers. Using again axioms 4 and 1, we conclude. \qed
\end{proof}

With such a result at hand, this proposition follows immediately.
\begin{prop}[Differential and Jacobian]\label{prop-ominimal-diffjac}
Let $f=(f_1,\cdots,f_p): \Om \to \RR^p$ be a differentiable function on an open subset $\Om$ of $\RR^n$. If $f$ is definable, then so its differential mapping and its Jacobian. In particular, for each $i=1,\cdots,n$ and $j=1,\cdots,p$, the partial derivative $\partial f_i/\partial x_j: \Om \to \RR$ is definable.
\end{prop}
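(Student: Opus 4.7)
The plan is to reduce the multivariate statement to the one-variable conclusion of Lemma~\ref{prop-ominimal-derivative} by mimicking the $\varepsilon$--$\delta$ characterization of the derivative used there, but now along each coordinate direction. First, I would observe that each scalar component $f_i : \Om \to \RR$ is definable: its graph is the image of $\graph(f) \subset \Om \times \RR^p$ under a coordinate projection (onto $(x,y_i)$), which, after the trivial reordering of coordinates (a semi-algebraic, hence definable, permutation by axiom~2), is obtained by iterated applications of axiom~4. Hence, throughout, I can work one component at a time.

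Next, fix indices $i \in \{1,\dots,p\}$ and $j \in \{1,\dots,n\}$ and introduce the auxiliary function
\[
g_{i,j} : \Om \times \RR \to \RR, \quad (x,t) \mapsto f_i(x + t e_j) - f_i(x),
\]
where $e_j$ is the $j$-th standard basis vector of $\RR^n$. The map $(x,t) \mapsto x + t e_j$ is affine, hence semi-algebraic and definable by axiom~2; composing with the definable $f_i$ and subtracting $f_i(x)$, Lemma~\ref{lem-ominimal-summult} yields that $g_{i,j}$ is definable. At this point I can repeat the proof of Lemma~\ref{prop-ominimal-derivative} verbatim with the scalar variable replaced by the vector $x \in \Om$: write
\[
\graph\!\left(\frac{\partial f_i}{\partial x_j}\right) = \biggl\{ (x,y) \in \Om \times \RR : \forall \varepsilon > 0,\ \exists \delta > 0,\ \forall t \in \RR,\ t^2 < \delta^2 \Rightarrow (g_{i,j}(x,t) - ty)^2 < \varepsilon^2 t^2 \biggr\},
\]
and translate the quantifiers into successive projections and complements, exactly as in the proof of Lemma~\ref{prop-ominimal-derivative}. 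Using axioms~1, 3 and 4, the resulting set is definable.

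Finally, to obtain definability of the full differential $x \mapsto df(x)$ and of the Jacobian mapping $\jac f : \Om \to \RR^{p \times n}$, I would express its graph as the intersection over the finitely many pairs $(i,j)$ of the cylindrical extensions of each $\graph(\partial f_i/\partial x_j)$ to $\Om \times \RR^{p\times n}$; each such cylindrical extension is definable by axiom~3, and the finite intersection is definable by axiom~1. There is no substantive obstacle here: once the one-dimensional Lemma~\ref{prop-ominimal-derivative} is in place, the only work is the coordinate-wise bookkeeping above, which is why the author writes that the proposition ``follows immediately.'' The only mildly delicate step is the construction and verification of definability of $g_{i,j}$, which must handle the parametric dependence on $x$; everything else is a direct transcription of the scalar argument.
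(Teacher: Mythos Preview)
Your proposal is correct and is precisely the expansion of what the paper means by ``follows immediately'': the paper gives no proof at all beyond that phrase, and your coordinate-wise reduction to the $\varepsilon$--$\delta$ argument of Lemma~\ref{prop-ominimal-derivative}, followed by assembling the Jacobian via finite intersections of cylindrical extensions, is exactly the intended route. The one point worth tightening is the definability of the composition $(x,t)\mapsto f_i(x+te_j)$: rather than invoking an unstated composition lemma, you can write its graph as the projection onto the first $n+2$ coordinates of $\{(x,t,u,z)\in\Om\times\RR\times\RR\times\Om: z=x+te_j,\ (z,u)\in\graph(f_i)\}$, which is definable by axioms~1--3, and then apply axiom~4.
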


We provide below some results concerning the subdifferential.

\begin{prop}[Subdifferential]\label{cor-ominimal-subdiff}
Suppose that $f$ is a finite-valued convex definable function. Then for any $x \in \RR^n$, the subdifferential $\partial f(x)$ is definable.
\end{prop}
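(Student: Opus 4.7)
The plan is to show that the graph $\graph(\partial f) = \{(x,v) \in \RR^n \times \RR^n : v \in \partial f(x)\}$ is a definable subset of $\RR^{2n}$, from which definability of the fiber $\partial f(x_0)$ at any point $x_0$ follows immediately by intersecting with $\{x_0\} \times \RR^n$ (definable since $\{x_0\}$ is semi-algebraic, hence definable by axiom~2, combined with axiom~3) and projecting onto the last $n$ coordinates (axiom~4).

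First, I would use the subgradient inequality characterization: since $f$ is convex and finite-valued, $v \in \partial f(x)$ if and only if the function $g: (x,v,y) \in \RR^{3n} \mapsto f(y) - f(x) - \dotp{v}{y-x}$ satisfies $g(x,v,y) \geq 0$ for all $y \in \RR^n$. The function $g$ is definable: the inner product $\dotp{v}{y-x}$ is a polynomial in the coordinates, hence semi-algebraic and thus definable by axiom~2; $f$ is definable by assumption; and Lemma~\ref{lem-ominimal-summult} ensures that sums, differences, and products of definable functions are definable, so $g$ is definable as well.

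Next I would form the sublevel set $A = \{(x,v,y) \in \RR^{3n} : g(x,v,y) \geq 0\}$, which is definable: one can write $A = \{g > 0\} \cup \{g = 0\}$, where the strict inequality piece is definable by Lemma~\ref{lem-ominimal-ineq} and the zero-level set is definable as the intersection of $\graph(g)$ with the definable set $\RR^{3n} \times \{0\}$ followed by the projection $\Pi_{3n+1,3n}$. To encode the universal quantifier $\forall y$, I would take the complement, project out $y$ via $\Pi_{3n,2n}$, and complement again, obtaining
\[
\graph(\partial f) = \RR^{2n} \setminus \Pi_{3n,2n}\pa{\RR^{3n} \setminus A},
\]
which is definable by applying axioms~1 and~4.

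The main obstacle is essentially bookkeeping: converting the logical definition of the subdifferential into operations on definable sets (projection for quantifiers after complementation, intersection for conjunction). There is no deeper difficulty here, because finiteness and convexity of $f$ ensure $\partial f(x)$ is non-empty at every $x$ and that the subgradient inequality holds globally on $\RR^n$, so no domain restrictions need to be carried through the first-order formula. The only place one must be slightly careful is justifying that $\{g \geq 0\}$ is definable from the lemmas already proved (which only handle strict inequalities); this is why I split $\geq$ as the union of $>$ and $=$.
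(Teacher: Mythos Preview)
Your proof is correct and follows essentially the same route as the paper: encode the subgradient inequality, use Lemma~\ref{lem-ominimal-summult} for definability of the relevant expression, and handle the universal quantifier by complement--project--complement via axioms~1 and~4. Two minor differences worth noting: the paper fixes $x$ from the outset and works directly with the ``bad'' set $K=\{(\eta,x'):f(x')<f(x)+\dotp{\eta}{x'-x}\}$, so it never needs to establish definability of a $\geq$-set and your splitting of $\{g\geq 0\}$ into $\{g>0\}\cup\{g=0\}$ is avoided (indeed, in your own argument you only ever use the complement $\{g<0\}$, which is immediately definable by Lemma~\ref{lem-ominimal-ineq}); and you prove the slightly stronger fact that $\graph(\partial f)$ is definable, which is in fact what the paper relies on later (e.g.\ in Lemma~\ref{lem-ominimal-ris} and in the proof of Theorem~\ref{thm-dof}).
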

\begin{proof}
For every $x \in \RR^n$, the subdifferential $\partial f(x)$ reads
\begin{equation*}
  \partial f(x) = \enscond{\eta \in \RR^n}{f(x') \geq f(x) + \dotp{\eta}{x'-x} \quad \forall x' \in \RR^n} .
\end{equation*}
Let $K = \enscond{(\eta,x') \in \RR^n \times \RR^n}{f(x') < f(x) + \dotp{\eta}{x'-x}}$.
Hence, $\partial f(x) = \RR^n \setminus \Pi_{2n,n}(K)$.
Since $f$ is definable, the set $K$ is also definable using Lemma~\ref{lem-ominimal-summult} and~\ref{lem-ominimal-ineq}, whence definability of $\partial f(x)$ follows using axiom 4. \qed
\end{proof}

\begin{lem}[Graph of the relative interior]\label{lem-ominimal-ris}
  Suppose that $f$ is a finite-valued convex definable function.
  Then, the set
  \begin{equation*}
    \enscond{(x,\eta)}{\eta \in \ri \partial f(x)}
  \end{equation*}
  is definable.
\end{lem}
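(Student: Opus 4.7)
The plan is to first promote Proposition~\ref{cor-ominimal-subdiff} from fiberwise to full-graph definability of $\partial f$, then invoke a first-order characterization of the relative interior to describe the target set, and finally translate that formula into the set operations preserved by axioms 1--4 of an o-minimal structure.

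For the first step, an easy adaptation of the proof of Proposition~\ref{cor-ominimal-subdiff} shows that the graph $\mathcal{G} = \{(x,\eta) \in \RR^n\times\RR^n : \eta \in \partial f(x)\}$ is definable: the set $K = \{(x,\eta,x') \in \RR^{3n} : f(x') < f(x) + \langle \eta, x'-x\rangle\}$ is definable by Lemmas~\ref{lem-ominimal-summult}--\ref{lem-ominimal-ineq} and the definability of $f$, and $\mathcal{G}$ is the complement in $\RR^{2n}$ of the projection of $K$ onto its first $2n$ coordinates.

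For the second step, I would invoke the classical characterization of the relative interior of a convex set \citep[Theorem~6.4]{Rockafellar96}: for $\eta \in \partial f(x)$, one has $\eta \in \ri \partial f(x)$ if and only if for every $\eta' \in \partial f(x)$ there exists $\epsilon > 0$ such that $(1+\epsilon)\eta - \epsilon\eta' \in \partial f(x)$. Introducing the polynomial, hence definable, map $\psi(x,\eta,\eta',\epsilon) = (x, (1+\epsilon)\eta - \epsilon\eta')$ and the auxiliary sets $S = \{(x,\eta,\eta') : (x,\eta') \in \mathcal{G}\}$ (definable by axiom~3 and a coordinate permutation) and $A = \psi^{-1}(\mathcal{G}) \cap \{\epsilon > 0\}$ (definable because preimages of definable sets by definable maps are definable, and by Lemma~\ref{lem-ominimal-ineq}), the target set rewrites as $\mathcal{G} \cap \bigl( \RR^{2n} \setminus \Pi_{\eta'}(S \setminus \Pi_{\epsilon}(A)) \bigr)$, where $\Pi_{\epsilon}$ and $\Pi_{\eta'}$ denote the projections eliminating the indicated variables. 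By axioms~1 and 4, the chain of intersections, complements and projections preserves definability, and the conclusion follows.

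The only subtle point, more a bookkeeping matter than a real obstacle, is faithfully encoding the alternating $\forall \eta'\,\exists \epsilon$ quantifier pattern as the complement-projection-complement-projection combination above. One small sub-claim used twice along the way is that the preimage $\psi^{-1}(\mathcal{G})$ of a definable set under a definable map is itself definable, which follows from expressing it as $\Pi\bigl(\mathrm{graph}(\psi) \cap (\RR^{3n+1} \times \mathcal{G})\bigr)$ and applying axioms 1 and 4.
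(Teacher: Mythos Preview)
Your proof is correct and follows essentially the same strategy as the paper: both invoke the characterization of the relative interior from \citep[Theorem~6.4]{Rockafellar96} and translate the resulting $\forall\,\exists$ formula into a chain of complements and projections covered by axioms~1 and~4. The only difference is cosmetic---you first isolate the definability of $\graph(\partial f)$ and then work relative to it, whereas the paper inlines the subdifferential inequality directly into a single set $K\subset\RR^{5n+1}$; your modular presentation is arguably cleaner but mathematically equivalent (your parameter $\epsilon>0$ is just the paper's $t-1$).
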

\begin{proof}
  Denote $C = \enscond{(x,\eta)}{\eta \in \ri \partial f(x)}$.
  Using the characterization of the relative interior of a convex set \citep[Theorem~6.4]{Rockafellar96}, we rewrite $C$ in the more convenient form 
  \begin{align*}
    C = \{(x,\eta) \, : \, &
      \forall u \in \RR^n,
      \forall z \in \RR^n, f(z) - f(x) \geq \dotp{u}{z-x} ,\\
      & \exists t > 1,
      \forall x' \in \RR^n,
      f(x') - f(x) \geq \dotp{(1-t) u + t \eta}{x'-x} \} .
  \end{align*}
  Let $D = \RR^n \times \RR^n \times \RR^n \times \RR^n \times (1,+\infty) \times \RR^n$ and $K$ defined as
  \begin{equation*}
    K = \enscond{(x,\eta,u,z,t,x') \!\in\! D}{f(z) - f(x) \geq \dotp{u}{z-x}), f(x') - f(x) \geq \dotp{(1-t) u + t \eta}{x'-x}} .
  \end{equation*}
  Thus,
  \begin{equation*}
    C = \RR^{2n} \setminus \Pi_{3n,2n} \left(
      \RR^{3n} \setminus \Pi_{4n,3n} \left(
        \Pi_{4n+1,4n} \left(
          \RR^{4n} \times (1,+\infty) \setminus \Pi_{5n+1,4n+1} (K)
        \right)
      \right)
    \right) ,
  \end{equation*}
  where the projectors and completions translate the actions of the existential and universal quantifiers. Using again axioms 4 and 1, we conclude. \qed
\end{proof}

\bibliographystyle{spbasic}
\bibliography{bibliography} 

\end{document}